\documentclass[12pt]{article}
\usepackage{amsmath}
\usepackage{amssymb}
\usepackage{amsthm}
\usepackage[pdftex]{graphicx}
\usepackage{array}
\usepackage{stmaryrd}
\usepackage{bbm}
\usepackage{mathtools}
\usepackage{colonequals}
\usepackage{tikz}
\usetikzlibrary{matrix}

\usepackage{fancyhdr}
\pdfpagewidth 8.5in
\pdfpageheight 11in
\setlength{\topmargin}{0in}
\setlength{\headheight}{0in}
\setlength{\headsep}{0in}
\setlength{\textheight}{9in}
\setlength{\textwidth}{6.5in}
\setlength{\oddsidemargin}{0in}
\setlength{\evensidemargin}{0in}

\newtheorem{thm}{Theorem}[subsection]
\newtheorem{lem}[thm]{Lemma}
\newtheorem{prop}[thm]{Proposition}
\newtheorem{conj}[thm]{Conjecture}
\newtheorem{cor}[thm]{Corollary}
\theoremstyle{definition}
\newtheorem{defn}[thm]{Definition}

\newtheorem{exmp}[thm]{Example}
\theoremstyle{remark}
\newtheorem{rem}{Remark}



\newcommand{\aff}{\mathbb{A}}

\newcommand{\cplx}{\mathbb{C}}

\newcommand{\gra}{\mathbb{G}}

\newcommand{\rats}{\mathbb{Q}}
\newcommand{\real}{\mathbb{R}}

\newcommand{\ints}{\mathbb{Z}}

\newcommand{\one}{\mathbbm{1}}

\newcommand{\al}{\alpha}

\newcommand{\del}{\delta}
\newcommand{\eps}{\epsilon}
\newcommand{\zt}{\zeta}

\renewcommand{\it}{\iota}

\newcommand{\lam}{\lambda}
\newcommand{\sg}{\sigma}

\renewcommand{\phi}{\varphi}

\newcommand{\Gam}{\Gamma}
\newcommand{\Del}{\Delta}

\newcommand{\Lam}{\Lambda}
\newcommand{\Sg}{\Sigma}
\newcommand{\Om}{\Omega}

\renewcommand{\bar}{\overline}
\renewcommand{\hom}{\text{Hom}}

\newcommand{\ten}{\otimes}
\newcommand{\dsm}{\oplus}

\newcommand{\bsl}{\backslash}

\newcommand{\w}{\wedge}
\newcommand{\inj}{\hookrightarrow}
\newcommand{\surj}{\twoheadrightarrow}

\newcommand{\isom}{\xrightarrow{\sim}}

\newcommand{\comment}[1]{}

\newcommand{\un}[1]{\underline{#1}}

\newcommand{\inn}[1]{\langle #1\rangle}
\newcommand{\ps}[1]{\llbracket #1\rrbracket}
\newcommand{\flr}[1]{\left\lfloor #1\right\rfloor}

\newcommand{\mattwo}[4]{\begin{pmatrix} #1 & #2 \\ #3 & #4\end{pmatrix}}

\DeclareFontFamily{OT1}{rsfs}{}
\DeclareFontShape{OT1}{rsfs}{n}{it}{<-> rsfs10}{}
\DeclareMathAlphabet{\mathscr}{OT1}{rsfs}{n}{it}

\newcommand{\dsc}{\mathscr{D}}

\newcommand{\hsc}{\mathscr{H}}

\newcommand{\osc}{\mathscr{O}}

\newcommand{\ssc}{\mathscr{S}}

\newcommand{\usc}{\mathscr{U}}

\newcommand{\wsc}{\mathscr{W}}

\newcommand{\zsc}{\mathscr{Z}}

\newcommand{\nfr}{\mathfrak{n}}

\newcommand{\Mfr}{\mathfrak{M}}

\newcommand{\Ufr}{\mathfrak{U}}

\newcommand{\dcal}{\mathcal{D}}

\DeclareFontFamily{U}{wncy}{}
    \DeclareFontShape{U}{wncy}{m}{n}{<->wncyr10}{}
    \DeclareSymbolFont{mcy}{U}{wncy}{m}{n}
    \DeclareMathSymbol{\Sh}{\mathord}{mcy}{"58}

\DeclareMathOperator{\cond}{cond}

\DeclareMathOperator{\diag}{diag}

\DeclareMathOperator{\id}{id}
\DeclareMathOperator{\im}{im}
\DeclareMathOperator{\ind}{Ind}
\DeclareMathOperator{\iw}{Iw}

\DeclareMathOperator{\np}{NP}

\DeclareMathOperator{\rank}{rank}

\DeclareMathOperator{\sym}{Sym}

\title{Slopes in eigenvarieties for definite unitary groups}
\author{Lynnelle Ye}

\setcounter{tocdepth}{1}



\begin{document}
\maketitle

\begin{abstract}
We generalize bounds of Liu-Wan-Xiao for slopes in eigencurves for definite unitary groups of rank $2$
to slopes in eigenvarieties for definite unitary groups of any rank. We show that for a definite unitary group of rank $n$, the Newton polygon of the characteristic power series of the $U_p$ Hecke operator has exact growth rate $x^{1+\frac2{n(n-1)}}$, times a constant proportional to the distance of the weight from the boundary of weight space. 
The proof goes through the classification of forms associated to principal series representations. We also give a consequence for the geometry of these eigenvarieties over the boundary of weight space.
\end{abstract}

\tableofcontents

\section{Introduction}

\nocite{*}

\subsection{Background and statement of main theorem}

The first ``eigenvariety'' was constructed by Coleman and Mazur in~\cite{cm98}. Now called the Coleman-Mazur eigencurve, it is a rigid analytic space parametrizing $p$-adic modular Hecke eigenforms with nonzero $U_p$-eigenvalues. Since then, further work by numerous authors has resulted in a collection of eigenvarieties for $p$-adic automorphic forms on various other groups. Particularly relevant for our purposes are the papers of Buzzard \cite{buzzard04}, \cite{buzzard07}, Chenevier \cite{chenevier04}, and Bella\"iche-Chenevier \cite{bc09}, in which eigenvarieties are eventually constructed for $p$-adic automorphic forms on definite unitary groups of all ranks. 

For simplicity of notation in this introduction, let $p$ be an odd prime. 
We write $v$ for the $p$-adic valuation and $|\cdot|$ for the $p$-adic norm, normalized so that $v(p)=1$ and $|p|=p^{-1}$. A \emph{weight} of a $p$-adic modular form is a continuous character of $\ints_p^\times$, and the \emph{weight space} is the rigid analytic space $\wsc$ parametrizing such characters. The $T$-coordinate of a point $w\in\wsc$ is the value $T(w)=w(\exp(p))-1$; the space $\wsc$ turns out to be a disjoint union of $p-1$ open unit discs with parameter $T$. For $r\in(0,1)$, we write $\wsc_{>r}$ for the rigid analytic subset of $\wsc$ where $|T|>r$.

We fix a tame level and let $\zsc$ be the corresponding eigencurve. We let $w:\zsc\to\wsc$ be the map taking an eigenform to its weight, $a_p:\zsc\to\gra_m$ be the map taking an eigenform to its $U_p$-eigenvalue, and $\zsc_{>r}$ be the preimage of $\wsc_{>r}$ in $\zsc$. The following conjecture, sometimes called the ``halo conjecture'', describes the geometry of the part of the eigencurve lying over the ``boundary'' of weight space (i.e. $\zsc_{>r}$ for $r$ sufficiently close to $1$).

\begin{conj}[Coleman-Mazur-Buzzard-Kilford, as stated by Liu-Wan-Xiao \cite{lwx17}]
\label{cmbk}
When $r\in(0,1)$ is sufficiently close to $1^-$, the following statements hold.
\begin{enumerate}
\item The space $\zsc_{>r}$ is a disjoint union of (countably infinitely many) connected components $Z_1,Z_2,\dotsc,$ such that the weight map $w:Z_n\to\wsc_{>r}$ is finite and flat for each $n$.

\item\label{tend0} There exist nonnegative rational numbers $\al_1,\al_2,\dotsc\in\rats$ in non-decreasing order and tending to infinity such that, for each $n$ and each point $z\in Z_n$, we have
\[
|a_p(z)|=|T(w(z))|^{\al_n}.
\]
\end{enumerate}
\end{conj}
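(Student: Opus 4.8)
The plan is to follow the approach of Liu--Wan--Xiao, deducing both parts of the conjecture from an explicit description of the Newton polygon of the characteristic power series of $U_p$ over the boundary annulus of weight space. Write $\Lam$ for the ring of bounded rigid analytic functions on the relevant component of $\wsc$, so $\Lam \cong \ints_p\ps{T}$, and realize the space of overconvergent automorphic forms of the fixed tame level as an orthonormalizable Banach $\Lam$-module $M$ on which $U_p$ acts as a compact operator. Let
\[
P(T,X) = \det\!\big(1 - X\, U_p \mid M\big) = 1 + \sum_{k\ge 1} c_k(T)\, X^k \in \Lam\ps{X}
\]
be its characteristic power series; by Coleman's spectral theory $\zsc$ is, locally over $\wsc$, cut out by factors of $P$, and $a_p$ corresponds to $1/X$ on the zero locus of $P$. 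Everything in the conjecture is therefore a statement about the Newton polygon of $P(T,X)$ specialized at weights $w$ with $|T(w)|$ close to $1$.

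\textbf{The boundary Newton polygon.} The heart of the argument is to compute, for each $k$, the $T$-adic valuation of $c_k(T)$ together with its leading term. Choosing an explicit Mahler-type orthonormal basis of $M$ one writes down the matrix of $U_p$ with entries in $\Lam$, bounds the $p$-adic and $T$-adic valuations of these entries, and, expanding the determinant $c_k = \sum \pm(\text{$k\times k$ minors})$, shows that a single monomial in $T$ dominates, so that $v_T(c_k) = h(k)$ for an explicit function $h$ whose lower convex hull is piecewise linear with slopes forming a non-decreasing sequence $\al_1 \le \al_2 \le \cdots$ of nonnegative rationals tending to $\infty$. Consequently there is a fixed polygon $\Del_\infty$ with exactly these slopes such that, for every $w$ with $|T(w)|$ sufficiently close to $1$, the Newton polygon of $P(T(w),X)$ equals $v(T(w))\cdot\Del_\infty$; in particular its slopes are exactly the $\al_n\, v(T(w))$. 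For the rank $2$ case this is the Liu--Wan--Xiao computation for the $U_p$-action on overconvergent modular forms, the combinatorial input being that the relevant matrix is, modulo higher powers of $T$, essentially triangular.

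\textbf{Deducing the geometry.} Fix $r$ large enough that the previous step applies on $\wsc_{>r}$. Grouping the slopes of $\Del_\infty$ into its maximal linear segments and applying the Weierstrass/Riesz factorization of $P$ over the ring of functions on $\wsc_{>r}$, we obtain $P = \prod_n P_n$ with $P_n$ a polynomial in $X$ of degree $d_n$ equal to the horizontal length of the $n$-th segment, all of whose roots over a weight $w$ have valuation exactly $\al_n v(T(w))$. Setting $Z_n$ to be the zero locus of $P_n$ in $\zsc_{>r}$ gives the decomposition in part (1): $w:Z_n\to\wsc_{>r}$ is finite of degree $d_n$, and it is flat because $Z_n$ is a torsion-free module of finite type over $\wsc_{>r}$. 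Part (2) is then immediate: for $z\in Z_n$ the eigenvalue $a_p(z)$ is the reciprocal of a root of $P_n$ at $w(z)$, so $v(a_p(z)) = \al_n\, v(T(w(z)))$, i.e.\ $|a_p(z)| = |T(w(z))|^{\al_n}$, and the fact that $\al_n\to\infty$ is part of the Newton polygon statement.

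\textbf{Main obstacle.} The crux, and the place where rank $\ge 3$ genuinely departs from the classical case, is making the dominant-term analysis of the coefficients $c_k(T)$ rigorous: one must rule out cancellation among the exponentially many minors contributing to $c_k$ and show that the naive Hodge-type lower bound on $v_T(c_k)$ is attained. For general definite unitary groups this is exactly where one passes through the classification of overconvergent forms associated to principal series representations: the parabolic induction structure is used to bring the $U_p$-matrix, modulo higher powers of $T$, into a tractable ``product'' shape on which the determinant can be evaluated in closed form. A secondary technical point is to verify that the threshold ``$r$ sufficiently close to $1^-$'' is uniform, i.e.\ that the leading-term picture for $P$ stabilizes for all weights above a fixed radius rather than degenerating as one moves within the annulus.
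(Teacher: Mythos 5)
There is a genuine gap, and it is worth being precise about its location. The statement you are proving is stated in the paper as a \emph{conjecture} (Coleman--Mazur--Buzzard--Kilford, in the form given by Liu--Wan--Xiao); the paper offers no proof of it, and indeed its own main results (Theorem~\ref{mybounds} and its consequences) are explicitly presented as \emph{weak} generalizations: upper and lower bounds on the Newton polygon of $\det(1-XU_p)$ that do not meet, together with a partial disconnectedness statement (Theorem~\ref{disconnect}). Your proposal correctly reproduces the Liu--Wan--Xiao \emph{framework} --- if one knew that for all weights $w$ with $|T(w)|$ close to $1$ the Newton polygon of $P(T(w),X)$ equals $v(T(w))\cdot\Delta_\infty$ for a fixed polygon $\Delta_\infty$ with slopes $\al_1\le\al_2\le\cdots\to\infty$, then the Riesz/Weierstrass factorization argument you sketch does yield the decomposition into finite flat pieces $Z_n$ and the identity $|a_p(z)|=|T(w(z))|^{\al_n}$. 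That deduction is standard and not where the difficulty lies.

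The gap is the step you yourself label the ``main obstacle'': the claim that $v_T(c_k)$ is given by an explicit function $h(k)$ because ``a single monomial in $T$ dominates'' in the minor expansion. Nothing in your proposal establishes this; asserting that cancellation can be ruled out, or that the $U_p$-matrix can be brought ``modulo higher powers of $T$'' into a product shape via the principal-series classification, is precisely the content that would turn the conjecture into a theorem, and it is not supplied. The paper's own machinery cannot supply it either: the Johansson--Newton-style estimates give only the \emph{lower} bound of Theorem~\ref{mybounds}(\ref{mylbd}) (the exponent $1+\frac{2}{n(n-1)}$ bound on $v_T(c_k)$ from divisibility of rows), while the classical/principal-series analysis gives an \emph{upper} bound through isolated points, and the paper explicitly notes that for $n\ge 3$ these bounds cannot match because there are (probably non-classical) finite-slope forms of smaller slope than some classical ones, so the exact boundary Newton polygon --- and hence parts (1) and (2) of the conjecture --- is out of reach of these methods. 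Even in the rank-$2$ setting, the non-cancellation analysis is the entire technical core of Liu--Wan--Xiao (and applies to quaternionic eigencurves rather than directly to the Coleman--Mazur eigencurve of the statement), so citing it as ``the relevant matrix is essentially triangular'' does not discharge the obligation. As written, your text is a correct road map to a proof strategy, not a proof; the statement remains, both in this paper and in your proposal, conjectural.
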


Note that Part~\ref{tend0} of Conjecture~\ref{cmbk} implies that as one approaches the boundary, the slope $v(a_p(z))$ approaches $0$ in proportion to $v(T(w(z)))$.

Liu, Wan, and Xiao (\cite{lwx17}) proved the equivalent version of this conjecture for automorphic forms on definite quaternion algebras over $\rats$. The key step in their work is to obtain strong upper and lower bounds on the Newton polygon of the characteristic power series of the $U_p$-operator. For consistency with our discussion, we will describe these bounds in the context of rank-$2$ definite unitary groups over $\rats$, for which the analysis is exactly the same. 

Let $G$ be an algebraic group over $\rats$ such that $G(\real)\cong U_n(\real)$ and $G(\rats_p)\cong GL_n(\rats_p)$, and $\usc\subset G(\aff_f)$ a compact open subgroup satisfying minor technical conditions. The corresponding eigenvariety $\zsc$ is now a rigid analytic space of dimension $n-1$ lying over the weight space $\wsc$ parametrizing continuous characters of $(\ints_p^\times)^{n-1}$. This $\wsc$ is a disjoint union of $(p-1)^{n-1}$ open unit polydiscs of dimension $n-1$ with parameters $T_1,\dotsc,T_{n-1}$. Let $\ssc_w(G,\usc)$ be the space of $p$-adic automorphic forms on $G$ of weight $w$ and level $\usc$. Then Liu-Wan-Xiao show that when $n=2$, the Newton polygon of $\det(I-XU_p|\ssc_w(G,\usc))$ is shaped approximately like the curve $y=Av(T_1(w))x^2$, where $A$ is a constant depending only on $G$, $\usc$, and $p$.

In this paper, we generalize this bound to definite unitary groups of all ranks by showing that for arbitrary $n$, the Newton polygon of $\det(I-XU_p|\ssc_w(G,\usc))$ is shaped approximately like $y=Av(T_i(w))x^{1+\frac2{n(n-1)}}$, assuming that the $v(T_i(w))$s are not extremely different in size. A more precise statement follows.

\begin{thm}
\label{mybounds}
\begin{enumerate}
\item \label{mylbd} There are constants $A_1,C>0$ (depending only on $G$, $\usc$, and $p$) such that for all $w$ such that each $|T_i(w)|>\frac1p$, the Newton polygon of the power series $\det(I-XU_p|\ssc_w(G,\usc))$ lies above the curve 
\[
y=\left(A_1x^{1+\frac2{n(n-1)}}-C\right)\min_iv(T_i(w)).
\]

\item \label{myubd} Suppose that $w(a_1,\dotsc,a_{n-1})=\prod_i a_i^{t_i}\chi_i(a_i)$, where $(t_1,\dotsc,t_{n-1})\in(\ints_{\ge0})^{n-1}$ with $t_1\ge\dotsb\ge t_{n-1}$, and each $\chi_i$ is a finite character of conductor $c_i$ such that $\cond(\chi_i\chi_j^{-1})=\max(\cond(\chi_i),\cond(\chi_j))$ for all $i\neq j$. 
Let $\chi_{(1)},\dotsc,\chi_{(n-1)}$ be the characters $\chi_1,\dotsc,\chi_{n-1}$ reordered so that $\cond(\chi_{(1)})\le\cond(\chi_{(2)})\le\dotsb\le\cond(\chi_{(n-1)})$, let $c_{(i)}=\cond(\chi_{(i)})$, and let $T_{(i)}=T(\chi_{(i)})$.

Then there is a constant $h$ (depending only on $G$, $\usc$, and $p$), a polynomial $d_{t_1,\dotsc,t_{n-1}}$ of total degree $\frac{n(n-1)}2$ in the $t_i$s, and a linear function $l(t_1,\dotsc,t_{n-1})$ such that the Newton polygon of $\det(I-XU_p|\ssc_w(G,\usc))$ contains at least 
\[
hp^{c_{(1)}+2c_{(2)}+\dotsb+(n-1)c_{(n-1)}-\frac{n(n-1)}2}d_{t_1,\dotsc,t_{n-1}}
\] 
segments of slope at most $l(t_1,\dotsc,t_{n-1})$, hence passes below the point
\[
\left(hp^{c_{(1)}+2c_{(2)}+\dotsb+(n-1)c_{(n-1)}-\frac{n(n-1)}2}d_t,hp^{c_{(1)}+2c_{(2)}+\dotsb+(n-1)c_{(n-1)}-\frac{n(n-1)}2}d_tl(t)\right).
\]
If $t_i-t_{i+1}\ge\eps(t_j-t_{j+1})$ for all $i\neq j$, this point can also be written as
\[
\left(x,A_2\left(v(T_{(1)})^{\frac{2}{n(n-1)}}v(T_{(2)})^{\frac{2\cdot 2}{n(n-1)}}\dotsb v(T_{(n-1)})^{\frac{2\cdot(n-1)}{n(n-1)}}\right)x^{1+\frac2{n(n-1)}}\right)
\]
for $x=hp^{c_{(1)}+2c_{(2)}+\dotsb+(n-1)c_{(n-1)}-\frac{n(n-1)}2}d_t$ and a constant $A_2$ (depending additionally on $\eps$). Note that in particular,
\[
v(T_{(1)})^{\frac{2}{n(n-1)}}v(T_{(2)})^{\frac{2\cdot 2}{n(n-1)}}\dotsb v(T_{(n-1)})^{\frac{2\cdot(n-1)}{n(n-1)}}\le\max_i v(T_i).
\]
\end{enumerate}
\end{thm}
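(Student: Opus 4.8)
\textbf{Part~\ref{mylbd}, the lower bound.} The plan is to follow the Hodge-bound strategy of Liu--Wan--Xiao \cite{lwx17}. First I would realize $\ssc_w(G,\usc)$ with its $U_p$-action as a space of continuous functions on a compact $p$-adic homogeneous space (concretely, a product of copies of the lower unipotent radical of $GL_n$ over $\ints_p$) valued in an integral model of the weight module $w$, and pick an orthonormal Mahler-type basis adapted both to the Iwahori filtration and to $w$. The task is then to bound, as $p$-adic valuations, the matrix entries of $U_p$ in this basis, exhibiting divisibility by a power of $p$ that grows with $\min_i v(T_i(w))$ times an explicit function of the basis index; summing these divisibilities over a leading $k\times k$ block bounds the Newton polygon of $\det(I-XU_p\mid\ssc_w(G,\usc))$ from below, since the Newton polygon lies above the Hodge polygon of the matrix. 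The exponent $1+\frac2{n(n-1)}$ appears because the basis is indexed by lattice points in a rational cone of dimension $\frac{n(n-1)}2$ $=$ the number of positive roots of $GL_n$: ordering basis vectors by the power of $p$ by which $U_p$ contracts them, the number with contraction $\le L$ is $\sim L^{n(n-1)/2}$, so the $k$-th is contracted by $\sim k^{2/(n(n-1))}$ and the Hodge polygon height over a leading $k\times k$ block is $\sim k\cdot k^{2/(n(n-1))}=k^{1+\frac2{n(n-1)}}$. The $O(1)$ slack in the lattice-point count goes into $C$, and the overall $\min_i v(T_i(w))$ comes from the $T_i(w)$-divisibility of the structure maps of the integral weight module. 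This part is mostly bookkeeping once the basis is chosen sharply enough.

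\textbf{Part~\ref{myubd}, the upper bound.} Here I would produce many classical forms of small slope by going through the classification of forms whose local component at $p$ is a principal series, as the abstract indicates. For the weight $w$ --- the algebraic character $(t_1\ge\dotsb\ge t_{n-1})$ twisted by finite characters $\chi_i$ --- a refinement of a principal series $\mathrm{Ind}_B^{GL_n}(\psi_1,\dotsc,\psi_n)$ occurring in $\ssc_w(G,\usc)$ has a $U_p$-eigenvalue whose valuation is an explicit linear functional $l(t_1,\dotsc,t_{n-1})$ of the Hodge--Tate weights, up to a bounded term. To count them: at infinite level the multiplicity is the $GL_n$ Weyl dimension, a polynomial $d_{t_1,\dotsc,t_{n-1}}$ of total degree $\frac{n(n-1)}2$; descending to level $\usc$ with conductors $c_{(1)}\le\dotsb\le c_{(n-1)}$ multiplies this by an oldform/parahoric-index factor of size $h\,p^{c_{(1)}+2c_{(2)}+\dotsb+(n-1)c_{(n-1)}-\frac{n(n-1)}2}$ (nested parahoric indices of the shape $|P\backslash GL_n(\ints/p^{c})|$), where the hypotheses $\cond(\chi_i\chi_j^{-1})=\max(\cond\chi_i,\cond\chi_j)$ keep this count uniform. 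This yields the stated number of Newton-polygon segments of slope at most $l(t)$, hence the passage below $(x,x\,l(t))$.

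\textbf{From the arithmetic form to the clean form, and the main obstacle.} To rewrite $(x,x\,l(t))$ in the advertised shape, I would use $v(T(\chi))=\tfrac1{p-1}p^{-(c-2)}$ for a character of conductor $p^{c}$ with $c\ge2$, so that $p^{c_{(i)}}\asymp v(T_{(i)})^{-1}$ up to the absolute constant $\tfrac{p^{2}}{p-1}$ and hence $p^{c_{(1)}+2c_{(2)}+\dotsb+(n-1)c_{(n-1)}}\asymp\prod_i v(T_{(i)})^{-i}$; under the comparability hypothesis $t_i-t_{i+1}\ge\eps(t_j-t_{j+1})$ one has $d_t\asymp s^{n(n-1)/2}$ and $l(t)\asymp s$ for a single scale $s$, so substituting $x=h\,p^{c_{(1)}+2c_{(2)}+\dotsb+(n-1)c_{(n-1)}-\frac{n(n-1)}2}d_t$ and matching powers of $s$ and of the $v(T_{(i)})$ produces exactly $A_2\bigl(\prod_i v(T_{(i)})^{2i/(n(n-1))}\bigr)x^{1+2/(n(n-1))}$. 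Finally, the displayed inequality $\prod_i v(T_{(i)})^{2i/(n(n-1))}\le\max_i v(T_i)$ is immediate: the exponents $\tfrac{2i}{n(n-1)}$ are nonnegative and sum to $1$ (as $\sum_{i=1}^{n-1}i=\frac{n(n-1)}2$), and each $v(T_{(i)})\ge0$ (since $T_{(i)}=\chi_{(i)}(\exp(p))-1$ with $\chi_{(i)}(\exp(p))$ a $p$-power root of unity), so the left side is a weighted geometric mean of the $v(T_{(i)})$ and is at most their maximum, which equals $\max_i v(T_i)$. I expect the real obstacle to be Part~\ref{myubd}, and within it the local representation theory at $p$: one must show that \emph{every} expected principal-series refinement genuinely occurs in $\ssc_w(G,\usc)$ with the right multiplicity after conductor-raising --- this is where the ``classification of forms associated to principal series representations'' does its work --- and that its $U_p$-slope is exactly the linear functional $l(t)$, not merely bounded by it. The lower bound is the more mechanical of the two.
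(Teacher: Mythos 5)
Your sketch of Part~\ref{mylbd} is in the right neighborhood: the paper does choose a basis on the $\frac{n(n-1)}2$-dimensional unipotent radical, bounds matrix entries of $U_p$ by powers of $T_a$ (the $T_i$ of smallest valuation), and sums these to get a Hodge-type lower bound for the Newton polygon, with the exponent $1+\frac2{n(n-1)}$ arising exactly as you describe from lattice-point counting in a cone of dimension $\frac{n(n-1)}2$. The one mechanism you leave imprecise is where the uniform $T_a$-divisibility of the matrix entries comes from. The paper does \emph{not} bound entries directly over a point $w$ of weight space; it passes to the Johansson--Newton adic compactification of weight space, working over the ring $R^\circ$ given by the $T_a$-adic completion of $\ints_p\ps{T_1,\dotsc,T_{n-1}}\bigl[\tfrac{p}{T_a},\tfrac{T_1}{T_a},\dotsc,\tfrac{T_{n-1}}{T_a}\bigr]$ and the continuous dual of the induced module with its $T_a$-adic norm. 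That $T_a$-adic norm, not ``$T_i$-divisibility of structure maps'' in a single fiber, is what produces the bound $|a_\mu^u|\le|T_a|^{N-\flr{N/p}}$ on the $|\mu|=N$ rows, and the claim follows by summing. Your version could plausibly be made to work but the divisibility step is the whole content and you have not nailed it down.

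For Part~\ref{myubd} there is a genuine gap: you never explain \emph{why} the slopes you want to count are bounded above by $l(t)$, and the mechanism you hint at --- that ``its $U_p$-slope is exactly the linear functional $l(t)$, not merely bounded by it'' --- is both not what the theorem asserts and not something that can be proved for an individual form. The slopes of individual finite-slope forms in the space $(U^{sm}(\chi)\ten S_t)(G,U_0(p))$ vary across the full $[0,l(t)]$ range; there is no ``up to a bounded term'' stability. What the paper actually uses is a companion-form argument: for an eigenform $f$ with $\pi_{f,p}\cong\pi(\psi_1,\dotsc,\psi_n)$, and for each $w\in S_n$, there is a companion form $f^w$ attached to the refinement $\psi^w$, living in the analogous space for $\chi^w$. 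Using Proposition~\ref{lamtopsi}, which converts the $\lam$-values to the Satake parameters $\psi_i(p)$, and the central-character identity $\prod_i\lam_i=1$ (from $U_p^{(1,\dotsc,1)}$ being scalar), one computes that the \emph{sum} of the slopes of the companion forms over $w\in S_n$ equals $l^a(t)$, an explicit linear function of the $t_i$; since each slope is the valuation of an algebraic integer and hence nonnegative, each individual slope is $\le l^a(t)$. Without the companion forms, the $S_n$-orbit bookkeeping, and the observation $\prod_i\lam_i=1$, the step from ``these forms are finite-slope'' (which is Propositions~\ref{jacquet} and~\ref{manyps}, and which you do correctly gesture at via Roche's types) to ``their slopes are $\le l(t)$'' is missing. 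Your dimension count, the degree of $d_t$, the index formula for $p^{j(\chi)}$, and the final conversion via $v(T_{(i)})\asymp p^{-c_{(i)}}$ and the weighted geometric mean inequality are all correct and match the paper.
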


\begin{rem}
It will be evident that the condition that $\cond(\chi_i\chi_j^{-1})=\max(\cond(\chi_i),\cond(\chi_j))$ for all $i\neq j$ is not required for the proof to go through; it is just there to allow us to state the best and cleanest bound.
\end{rem}

We also leverage Theorem~\ref{mybounds} to prove two statements that may be more geometrically satisfying. First, we prove the following alternative version of the upper bound which provides infinitely many upper bound points on the same Newton polygon.

\begin{thm}
\label{unified-ubd}
Suppose that $w(a_1,\dotsc,a_{n-1})=\prod_i a_i^{t_i}\chi_i(a_i)$, where $(t_1,\dotsc,t_{n-1})\in(\ints_{\ge0})^{n-1}$ with $t_1\ge\dotsb\ge t_{n-1}$, and each $\chi_i$ is a finite character of conductor $c_i$ such that $\cond(\chi_i\chi_j^{-1})=\max(\cond(\chi_i),\cond(\chi_j))$ for all $i\neq j$. 
Then there is a constant $A_2$ such that for every radius $r>0$, there is a weight $s$ such that $|T_i(w)-T_i(s)|<r$ for all $i$, $|T_i(\chi t)|=|T_i(s)|$ for all $i$, and the Newton polygon of $\det(I-XU_p|\ssc_w(G,\usc))$ lies below an infinite sequence of points (with $x$-coordinates going to $\infty$) lying on the curve parametrized by
\[
\left(x, A_2x^{1+\frac1{\binom n2}}\cdot\left(v(T_{(1)}(s))^{\frac{2}{n(n-1)}}v(T_{(2)}(s))^{\frac{2\cdot 2}{n(n-1)}}\dotsb v(T_{(n-1)}(s))^{\frac{2\cdot(n-1)}{n(n-1)}}\right)\right).
\]
\end{thm}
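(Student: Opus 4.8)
The plan is to work with the universal characteristic power series. Write $P(X)=\sum_{m\ge 0}c_mX^m\in\alin(\wsc)\ps X$, so that $\det(I-XU_p\mid\ssc_{w'})=\sum_m c_m(w')X^m$ for every weight $w'$; on the connected component of $\wsc$ through $w$ each $c_m$ is a convergent power series $\sum_{\vec\jmath}a_{m,\vec\jmath}T_1^{j_1}\dotsb T_{n-1}^{j_{n-1}}$ over a fixed finite extension of $\ints_p$. Set $v_i=v(T_i(w))$, $\vec v=(v_1,\dotsc,v_{n-1})$, and let $\np_{\vec v}$ be the $\vec v$-Gauss Newton polygon of $P$: the lower convex hull of the points $\bigl(m,\ \min_{\vec\jmath}(v(a_{m,\vec\jmath})+\sum_i j_iv_i)\bigr)$. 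Since each $a_{m,\vec\jmath}$ is integral and each $v_i>0$, this polygon lies in the first quadrant and passes through the origin; and for any weight $w'$ with $v(T_i(w'))=v_i$ for all $i$, the ultrametric inequality gives $v(c_m(w'))\ge\min_{\vec\jmath}(v(a_{m,\vec\jmath})+\sum_i j_iv_i)$, so the Newton polygon of $\det(I-XU_p\mid\ssc_{w'})$ lies on or above $\np_{\vec v}$. I will show \emph{(I)} that $\np_{\vec v}$ lies below an infinite sequence of points, with $x$-coordinates tending to $\infty$, on the curve in the statement, and \emph{(II)} that the Newton polygon of $\det(I-XU_p\mid\ssc_w)$ is in fact equal to $\np_{\vec v}$, so it lies below the same points. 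The auxiliary weight $s$ serves only to keep the finite parts fixed --- so that $|T_i(s)|=|T_i(w)|$, and the curve, which depends on $w$ only through the conductors via $v(T_{(i)})=v(T(\chi_{(i)}))$, is literally unchanged --- and, if necessary, to supply a genericity hypothesis in step (II).

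For (I): fix integers $e_1>e_2>\dotsb>e_{n-1}\ge1$ (if some $\chi_i$ is trivial, take the $e_i$ divisible by a sufficiently high power of $p$), and for $k\ge1$ put $\sigma^{(k)}=\prod_i a_i^{t_i+ke_i}\chi_i(a_i)$. Then $\sigma^{(k)}$ has the same finite parts as $w$ --- so the conductor condition is inherited, $v(T_i(\sigma^{(k)}))=v_i$, and $T_{(i)}(\sigma^{(k)})=T_{(i)}(w)$ --- its algebraic part is dominant and nonnegative, for $k$ large it satisfies the gap condition $t_i-t_{i+1}\ge\eps(t_j-t_{j+1})$ with $\eps=\min_i(e_i-e_{i+1})/\max_j(e_j-e_{j+1})$, and $d_{t^{(k)}}$, of total degree $\tfrac{n(n-1)}2$ in the $t_i+ke_i$, tends to $\infty$. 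Hence Theorem~\ref{mybounds}(\ref{myubd}) applies to $\sigma^{(k)}$ and produces a point
\[
P_k=\left(x_k,\ A_2\bigl(v(T_{(1)})^{\frac2{n(n-1)}}v(T_{(2)})^{\frac{2\cdot 2}{n(n-1)}}\dotsb v(T_{(n-1)})^{\frac{2\cdot(n-1)}{n(n-1)}}\bigr)x_k^{1+\frac2{n(n-1)}}\right),
\]
with $x_k=hp^{\,c_{(1)}+2c_{(2)}+\dotsb+(n-1)c_{(n-1)}-\frac{n(n-1)}2}\,d_{t^{(k)}}\to\infty$, below which the Newton polygon of $\det(I-XU_p\mid\ssc_{\sigma^{(k)}})$ passes. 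Since $\np_{\vec v}$ lies on or below that polygon, $\np_{\vec v}$ passes below $P_k$; letting $k$ run over the positive integers gives the required infinite family of points on the curve $y=A_2\,\bigl(v(T_{(1)})^{\frac2{n(n-1)}}\dotsb v(T_{(n-1)})^{\frac{2\cdot(n-1)}{n(n-1)}}\bigr)\,x^{1+\frac2{n(n-1)}}$.

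For (II): the point is to rule out cancellation at $w$, i.e.\ to show $v(c_m(w))=\min_{\vec\jmath}(v(a_{m,\vec\jmath})+\sum_i j_iv_i)$ for every $m$, so that the Newton polygon of $\det(I-XU_p\mid\ssc_w)$ equals $\np_{\vec v}$ and hence lies below every $P_k$ by (I). I would derive this from the classification of forms associated to principal series representations that underlies Theorem~\ref{mybounds}: in the basis it provides, $U_p$ is given, after a suitable normalization, by a matrix whose entries are explicit functions of the weight, and the minors controlling the coefficients $c_m$ have, in the $\vec v$-Gauss valuation, a leading term equal to a monomial in $T_1,\dotsc,T_{n-1}$ times a $p$-adic unit; such a leading term cannot be cancelled by terms of strictly larger $\vec v$-Gauss valuation, so $v(c_m(w'))$ equals the $\vec v$-Gauss valuation of $c_m$ for \emph{every} $w'$ on the polyannulus $\{|T_i|=|T_i(w)|\}_i$, in particular for $w'=w$. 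If for some degenerate $w$ these leading forms cannot be made individually cancellation-free, one instead chooses the weight $s$ of the statement in a small enough polydisc about $w$ (so $|T_i(s)-T_i(w)|<r$, which for small $r$ also gives $|T_i(s)|=|T_i(w)|$) avoiding the finitely many proper analytic subsets along which the relevant leading forms vanish; then $\np_{\vec v}$ is unchanged and the conclusion holds for $\det(I-XU_p\mid\ssc_s)$.

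Part (I) is soft: it is only Theorem~\ref{mybounds}(\ref{myubd}), applied to the twisted weights $\sigma^{(k)}$, together with the elementary inequality $v(c_m(\sigma^{(k)}))\ge$ ($\vec v$-Gauss valuation of $c_m$). The main obstacle is (II): pinning down the coefficients $c_m$ of the universal $P$ accurately enough --- through the explicit action of $U_p$ on the principal-series basis --- to know that their $\vec v$-Gauss leading terms are monomials times units, so that the Newton polygon at the actual weight $w$ is not pushed above $\np_{\vec v}$ by cancellation among minors. This is precisely where the classification of forms attached to principal series representations is indispensable, and where most of the work of the argument should go.
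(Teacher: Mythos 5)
Your step (I) is sound and is, in spirit, the same move the paper makes: apply Part~\ref{myubd} of Theorem~\ref{mybounds} to a sequence of locally algebraic weights $t^{(k)}\chi$ with the same finite parts, to produce infinitely many upper-bound points $P_k$. But the paper and your proposal diverge sharply after that, and the divergence is where the gap is.

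Your step (II) tries to transfer the $P_k$ to a single weight by showing that the Newton polygon at (some perturbation $s$ of) $w$ equals the $\vec v$-Gauss Newton polygon $\np_{\vec v}$ of the universal characteristic power series. There are two problems. First, the primary claim --- that the $\vec v$-Gauss leading form of each $c_m$ is a single monomial times a unit --- is not something the classification of classical forms gives you: that classification controls a finite-dimensional subspace of $\ssc_w(G,\usc)$ and the $U_p$-eigenvalues on it, not the shape of the full coefficients $c_m\in\ints_p\ps{T_1,\dotsc,T_{n-1}}$ of $\det(1-XU_p)$, which see contributions from the entire infinite-dimensional space. Second, and more seriously, the fallback does not work either: the ``bad'' locus where $v(c_m(s))$ strictly exceeds the Gauss valuation is cut out by a norm inequality $|c_m|<$ (Gauss norm), which is an \emph{open} condition in the polyannulus $\{|T_i|=|T_i(w)|\}$, not the zero set of an analytic function. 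Concretely, if the leading form of $c_m$ is $\bar T_1^2-\bar 1$ on the annulus $|T_1|=p^{-1}$ (so $I_m$ is not a singleton), the locus of extra cancellation is a union of tubes $\{T_1/p\equiv\pm1\pmod p\}$, which has nonempty interior. It is therefore not nowhere dense, there are not ``finitely many proper analytic subsets'' to avoid, and a Baire-type argument fails: if $w$ lies in the interior of such a tube, no $s$ within a small $r$ of $w$ will escape it, so one cannot hope to achieve $\np(s)=\np_{\vec v}$ by perturbing $w$.

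This is exactly the difficulty the paper's proof is designed to route around. Instead of trying to realize the Gauss Newton polygon at a nearby point, the paper iteratively builds $s$ as a $p$-adic limit $t^\infty\chi$ of locally algebraic weights $t^{(k)}\chi$, using Wan's Lemma~\ref{np-coincide}: since $t^{(k+1)}\equiv t^{(k)}$ modulo a sufficiently high power of $p$, the characteristic power series at $t^{(k)}\chi$ and $t^{(k+1)}\chi$ agree modulo $p^{m_\nu(\cdot)+1}$, so their Newton polygons coincide on the range of slopes where the already-acquired bound points $P_0,\dotsc,P_k$ live. This makes the bound points propagate along the sequence and to the limit $t^\infty\chi=s$ without any cancellation-free or genericity assumption on the coefficients $c_m$. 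That is the key lemma missing from your argument; replacing your step (II) with it would fix the proof.
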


We then use the lower bound of Theorem~\ref{mybounds} to prove the following (vaguely stated) decomposition result for the boundary of the eigenvariety.

\begin{thm}
\label{disconnect-intro}
Let $a_p:\zsc\to\gra_m$ be the map taking a point of $\zsc$ to its $U_p$-eigenvalue. For $\al\in\real_{\ge0}$, let $\zsc(\al)$ be the subset of points $z\in\zsc$ such that $v(a_p(z))=\al v(T_i(z))$. Then over certain open subsets of the weight polydisc boundary where $v(T_i)$ is much smaller than all the other $v(T_j)$s, $\zsc(\al)$ is disconnected from its complement in $\zsc$.
\end{thm}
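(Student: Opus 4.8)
The plan is to deduce the decomposition result from the slope bounds of Theorem~\ref{mybounds} by a standard ``gap argument'' on Newton polygons, combined with Hensel-type factorization of the characteristic power series over affinoid subdomains of the weight polydisc. Fix an open subset $\wsc'$ of the boundary of the weight polydisc on which some $v(T_i)$ is much smaller than every other $v(T_j)$; we will choose later what ``much smaller'' quantitatively means. Over $\wsc'$ the characteristic power series $P(X) = \det(I - X U_p \mid \ssc_w(G,\usc))$ specializes to a power series whose Newton polygon, for each $w \in \wsc'$, lies above the curve $y = (A_1 x^{1 + 2/(n(n-1))} - C)\,\min_i v(T_i(w))$ by the lower bound (Part~\ref{mylbd}), and below the explicit upper-bound point(s) of Part~\ref{myubd}, whose shape is governed by the product $v(T_{(1)})^{2/(n(n-1))}\cdots v(T_{(n-1)})^{2(n-1)/(n(n-1))}$. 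When one $v(T_i)$ is very small and the rest are comparatively large, this upper-bound quantity is dominated by the small factor raised to a small power while the lower-bound quantity scales linearly in the small factor; the mismatch in these two rates of vanishing forces the Newton polygon to have an arbitrarily long initial run of segments whose slopes, measured in units of $v(T_i)$, are bounded by some fixed $\al_0$, followed by a genuine vertical gap before any segment of slope exceeding $\al_0 v(T_i)$ can occur.

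Concretely, the first step is to make this gap precise: I would show that for each target slope ratio $\al$, there is a choice of $\wsc'$ (i.e. a quantitative condition of the form $v(T_i) \le \del \min_{j \ne i} v(T_j)$ for suitable $\del$) on which, for every $w \in \wsc'$, the Newton polygon of $P$ at $w$ has a vertex at some abscissa $x(w)$ with the property that the slope just to the left of $x(w)$ is $< \al v(T_i(w))$ and the slope just to the right is $> \al v(T_i(w))$, with the two slopes separated by a definite amount. This is where the two halves of Theorem~\ref{mybounds} get played against each other: the upper bound guarantees many segments of small slope (so $x(w)$ is large and $\zsc(\al)$ is nonempty over $\wsc'$), and the lower bound's superlinear growth in $x$ guarantees that once the accumulated height forces the polygon up to the lower-bounding curve, all subsequent slopes jump above $\al v(T_i(w))$. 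The second step is to upgrade this pointwise statement to a factorization $P(X) = Q(X) R(X)$ over the affinoid algebra of (a slightly shrunk) $\wsc'$, where $Q$ has degree equal to the (locally constant, by the gap) value $x(w)$ and all slopes of $Q$ are $< \al v(T_i)$ while all slopes of $R$ are $> \al v(T_i)$; this is the usual Weierstrass/Hensel factorization associated to a Newton-polygon gap, valid because the gap is uniform over $\wsc'$. The zero locus of $Q$ is then an open-and-closed subset of $\zsc$ over $\wsc'$ containing $\zsc(\al) \cap w^{-1}(\wsc')$ and disjoint from its complement, which is exactly the asserted disconnectedness.

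The third step is bookkeeping: one must check that $\zsc(\al)$ really does land inside the piece cut out by $Q$ — i.e. that a point with $v(a_p(z)) = \al v(T_i(z))$ cannot come from a slope-$> \al v(T_i)$ segment — which follows because $\al v(T_i)$ lies strictly inside the slope gap, not on either side of it, after possibly perturbing $\al$ within its rational-slope class; and conversely that the complement $\zsc \setminus \zsc(\al)$ restricted to $\wsc'$ is covered by the slopes on both sides of the gap and hence splits as (part of $\zsc(Q)$) $\sqcup$ (part of $\zsc(R)$). I would also remark, as in the introduction's phrasing, that the open subsets of the boundary in question are precisely those neighborhoods where the domination $v(T_i) \ll v(T_j)$ needed for the gap holds, and that the construction produces not just one disconnection but a whole family indexed by the admissible slope ratios $\al$ (equivalently, by the vertices of the Newton polygon falling in the gap region).

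The main obstacle I anticipate is the second step — promoting the pointwise Newton-polygon gap to an honest factorization of $P$ over the affinoid $\wsc'$ with \emph{uniform} control. The subtlety is that the abscissa $x(w)$ of the gap vertex and the exact slope values vary with $w$, and one needs the gap (the vertical jump in slope, or equivalently a lower bound on the difference of the two adjacent slopes times a lower bound on how far the polygon sits below the lower-bounding curve on the left and above it on the right) to be bounded below \emph{independently of} $w \in \wsc'$, so that Hensel's lemma applies with a fixed radius. Getting this uniformity requires carefully tracking the constants $A_1, A_2, C, h$ from Theorem~\ref{mybounds} and the dependence of the upper-bound $x$-coordinate $hp^{c_{(1)} + 2c_{(2)} + \dotsb + (n-1)c_{(n-1)} - n(n-1)/2} d_t$ on the weight, and then choosing $\del$ (the quantitative meaning of ``much smaller'') small enough as a function of $\al$ and these constants. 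Once that uniform estimate is in hand, the factorization and the topological conclusion are formal.
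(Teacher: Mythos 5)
There is a genuine gap, and it sits exactly where you flagged your ``main obstacle'': no amount of constant-tracking in the two bounding curves of Theorem~\ref{mybounds} can produce the uniform slope gap you need. The lower bound $y\ge v(T_a)\bigl(A_1x^{1+\frac2{n(n-1)}}-C\bigr)$ and the upper-bound points only confine the Newton polygon to a region between two curves; within that region nothing prevents segments of slope exactly $\al v(T_a)$, or slopes approaching $\al v(T_a)$ from both sides as $w$ varies, so the ``definite vertical jump'' at a vertex $x(w)$ is not forced, and hence the Hensel/Weierstrass factorization in your second step has no uniform radius to work with. The paper's proof of Theorem~\ref{disconnect} uses a different engine, and it is the idea your proposal is missing: integrality of the coefficients. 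Writing $c_N(T)=\sum_m b_{N,m}T^m$ with $b_{N,m}\in\ints_p$, every vertex value is $v(c_N(T))=\mu_N+\lam_N^1v(T_1)+\dotsb+\lam_N^{n-1}v(T_{n-1})$ with nonnegative integers $\mu_N,\lam_N^k$. The lower bound (alone, and only because $f$ is convex) traps the vertex at abscissa $d(\al,T)$ inside a box of width $d(\al)$ and height $\al d(\al)v(T_a)$ with $d(\al)$ independent of $T$; once $v(T_a)<\frac1{\al d(\al)}$ and $v(T_a)<\frac1{\al d(\al)}v(T_j)$ for $j\neq a$, the constraint $v(c_N(T))\le\al d(\al)v(T_a)$ kills $\mu_N$ and all $\lam_N^k$ with $k\neq a$, so every slope below $\al v(T_a)$ is of the form $\frac{\lam_j-\lam_i}{j-i}v(T_a)$ with numerator and denominator in a fixed finite range. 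The possible slope-to-$v(T_a)$ ratios therefore form a finite set of rationals, bounded away from $\al$ \emph{independently of} $T$; that discreteness, not the analytic bounds, is what gives the uniform gap, and the disconnection of $X(<\al)$, $X(\le\al)$, hence $X(=\al)$, then follows as in your steps two and three.

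Two further points of divergence. First, your appeal to Part~\ref{myubd} to get nonemptiness of $\zsc(\al)$ over $\wsc'$ does not work as stated: that upper bound is proved only at locally algebraic weights $t\chi$ satisfying the conductor hypotheses, not uniformly over an open subset of the boundary, and even at those weights it bounds slopes above by $l(t)$ without pinning any eigenvalue to the exact ratio $\al$. This is precisely why the paper states Theorem~\ref{disconnect-intro} with the caveat that nonemptiness of the pieces $\zsc(\al)$ is not verified, and why its proof of Theorem~\ref{disconnect} uses only Part~\ref{mylbd} (any convex lower bound of the form $v(T_a)f(x)$ suffices). Second, once the uniform gap is in hand your factorization step is fine and is essentially what the paper implicitly invokes, so the repair needed is localized: replace the ``play the upper bound against the lower bound'' mechanism with the integrality-plus-bounded-box argument above.
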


(See Theorem~\ref{disconnect} for the precise statement.) Theorems~\ref{mybounds}, \ref{unified-ubd}, and \ref{disconnect-intro} can all be seen as weak generalizations of Conjecture~\ref{cmbk} to definite unitary groups of arbitrary rank. 

Historically, Conjecture~\ref{cmbk} arose from a question of Coleman and Mazur \cite{cm98} and was suggested by a computation of Buzzard and Kilford \cite{bk05} for $p=2$ and tame level $1$. Further explicit computations for small primes were later done by Roe~\cite{roe14}, Kilford~\cite{kilford08}, and Kilford and McMurdy~\cite{km12}. The conjecture is given above in the form stated by Liu, Wan, and Xiao \cite{lwx17}, whose proof for definite quaternion algebras builds on the work of Wan, Xiao, and Zhang \cite{wxz17}. Statements of this nature can have far-reaching consequences for the arithmetic of modular forms---see for example \cite{jn19} or \cite{nt19}. 

As far as we know, there is little prior work on the shape of the Newton polygon of $\det(I-XU_p|\ssc_w(G,\usc))$ for any $G$ of rank greater than $2$. The only prior result for general rank we have been able to find in the literature is Chenevier's lower bound in~\cite{chenevier04} of the form $y=Ax^{1+\frac1{2^n-n-1}}$, which 
applies only to the center of weight space. There is also a similar lower bound for Hilbert modular forms over real quadratic fields by Birkbeck \cite{birkbeck19}. As late as 2018, Andreatta, Iovita, and Pilloni wrote~\cite{aip15} that there were not even any conjectures about the actual shape of the Newton polygon for higher-dimensional eigenvarieties in the literature.


\subsection{Proof outline}

The proof of Part~\ref{mylbd} of Theorem~\ref{mybounds} is an application of the method of Johansson-Newton \cite{jn16}. They construct families of automorphic forms extending \emph{over} the boundary of weight space, to points in what can be viewed as an adic compactification of weight space, and show that the eigenvariety also extends to those points. (Also see Gulotta \cite{gulotta19} for an analogous construction extending equidimensional eigenvarieties.) Consequently, we can compute the matrix coefficients of $U_p$ in an explicit basis for the space of forms over the ``boundary weights'' given by monomials in the matrix coefficients of the dimension-$\frac{n(n-1)}2$ maximal lower unipotent subgroup of $GL_n(\rats_p)$. Explicit bounds on those matrix coefficients arise directly from the proof of complete continuity of $U_p$.

The proof of Part~\ref{myubd} of Theorem~\ref{mybounds} requires a detailed analysis of $p$-adic automorphic representations which may be of independent interest. As in the proof of Proposition 3.20 of \cite{lwx17}, 
we would like to carry out the following steps:

\begin{enumerate}
\item \label{subspace} Construct a subspace $S$ of $\ssc_w(G,\usc)$ of dimension $hp^{c_{(1)}+2c_{(2)}+\dotsb+(n-1)c_{(n-1)}-\frac{n(n-1)}2}d_t$ which can be thought of as the space of ``classical forms of weight $w$ and minimal level''.

\item \label{upinjective} Prove that $U_p$ is injective on $S$, so that all eigenforms in $S$ have finite slope.

\item \label{finiteissmall} Prove that finite-slope eigenforms in $S$ have slope bounded above by $l(t)$.
\end{enumerate}

In general, it is a fact that a classical form $f$ is finite-slope if and only if the local component $\pi_{f,p}$ at $p$ of its associated automorphic representation is a principal series representation of $GL_n(\rats_p)$. For $n=2$ as in Liu-Wan-Xiao, the upper bound then follows from the fact that $\pi_{f,p}$ is a principal series if the level of $f$ equals the conductor of its central character, which can be checked (as in Loeffler-Weinstein \cite{lw12}) by comparing its level to that of the new vectors in each of the three possible Bernstein-Zelevinsky classes of representations (principal series, special, and supercuspidal), those new vectors having been written down by Casselman \cite{casselman73}. The dimension of the space of such $f$ is easy to count. 

To detect when $\pi_{f,p}$ is a principal series for all $n$ without brute-forcing through Bernstein-Zelevinsky classes, we use Roche's analysis of principal series types (\cite{roche98}). (As a historical note, much of the analysis we rely on was already done for $GL_n$ by Howe and Moy---see e.g. \cite{hm90}. For more information on types in general, see Fintzen \cite{fintzen18}.) For a smooth character $\chi$ of $T(\rats_p)$, Roche gives a subgroup $J\subset GL_n(\ints_p)$ and an extension of $\chi|_{T(\ints_p)}$ to $J$ such that an irreducible representation $\pi$ of $GL_n(\rats_p)$ is a principal series associated to an unramified twist of $\chi$ if and only if $\pi$ contains a vector on which $J$ acts by $\chi$ (which we will call a $(J,\chi)$-vector). 

To proceed, we construct a space of forms $S$ so that for any eigenform $f\in S$, $\pi_{f,p}$ admits a nontrivial map from $\ind_J^{\iw_p}\chi$, where $\iw_p$ is the subgroup of $GL_n(\ints_p)$ of matrices that are upper triangular mod $p$, hence contains a $(J,\chi)$-vector and is a principal series. The $S$ we construct is not a priori a subspace of $\ssc_w(G,\usc)$, but we can show that it embeds into $\ssc_w(G,\usc)$ using Emerton's locally analytic Jacquet functor. 
The dimension of $S$ is proportional to the product of $\dim\ind_J^{\iw_p}\chi$, which is a function of the valuations $v(T_i)$, and the dimension of the algebraic representation of $GL_n$ of highest weight corresponding to the algebraic part of $(T_1,\dotsc,T_{n-1})$, which is a polynomial of total degree $\frac{n(n-1)}2$ in the weight parameters by a combinatorial calculation. This covers Steps \ref{subspace} and \ref{upinjective}. Then we do Step \ref{finiteissmall} by constructing companion forms $f^w$ of $f$ for each $w\in S_n$ such that the slopes of all the companion forms sum to $l(t)$.

We can also use $S$ to help understand the standard classical subspaces of $\ssc_w(G,\usc)$. Specifically, when the pair $(J,\chi)$ arising from the weight $w$ satisfies the additional technical condition that $\cond(\chi_i)<2\cond(\chi_j)$ for all $i\neq j$ with $i,j\neq n$, we show by a Mackey theory calculation that $\ind_J^{\iw_p}\chi$ is an irreducible representation of $\iw_p$. In this case, $S$ can be embedded into a classical subspace of $\ssc_w(G,\usc)$, and we can show the following.

\begin{thm}
\label{classifyps-vague}
$S$ is \emph{precisely} the space of finite-slope classical forms of weight $w$.
\end{thm}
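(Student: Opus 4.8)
The plan is to show two inclusions: that $S$ lands inside the space of finite-slope classical forms, and that every finite-slope classical form of weight $w$ lies in (the image of) $S$. For the first inclusion, we already know from Steps~\ref{subspace}--\ref{upinjective} of the proof outline that $S$ embeds into $\ssc_w(G,\usc)$ via Emerton's Jacquet functor and that $U_p$ acts injectively on $S$, so every eigenform in $S$ is finite-slope. What remains for this direction is to check that the image actually consists of \emph{classical} forms — this is where the hypothesis $\cond(\chi_i)<2\cond(\chi_j)$ for $i\neq j$, $i,j\neq n$ enters: under this condition the Mackey-theory computation shows $\ind_J^{\iw_p}\chi$ is irreducible as a representation of $\iw_p$, so the space of $(J,\chi)$-vectors inside any $f\in S$ generates a copy of the full induced representation, and in particular $f$ is fixed by a small enough congruence subgroup at $p$ and is locally algebraic of the correct weight, hence classical by the usual classicality criterion (finite slope plus locally algebraic).

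For the reverse inclusion, I would start from an arbitrary finite-slope classical eigenform $f\in\ssc_w(G,\usc)$. By the fact quoted in the outline, finite slope forces $\pi_{f,p}$ to be a principal series representation of $GL_n(\rats_p)$, say an unramified twist of some $\chi'$ of $T(\rats_p)$. The weight $w$ pins down the infinitesimal/algebraic part of $\pi_{f,p}$, and the level $\usc$ — which at $p$ is $\iw_p$ — forces $\pi_{f,p}^{\iw_p,\text{-isotypic for the nebentypus}}\neq 0$. Using Roche's theorem, this $\iw_p$-fixed (up to nebentypus) vector condition, combined with the identification of the Bernstein component, should force $\chi'$ to be an unramified twist of exactly the $\chi$ attached to $w$; the key point is that Roche's $(J,\chi)$ pair detects the component and the conductor data of $w$ matches no other component. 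Then $\pi_{f,p}$ contains a $(J,\chi)$-vector, so there is a nonzero $\iw_p$-map $\ind_J^{\iw_p}\chi\to\pi_{f,p}$, and by irreducibility of $\ind_J^{\iw_p}\chi$ this map is injective. Running the construction of $S$ in reverse — i.e. recognizing that the data $(f, \text{this }\iw_p\text{-embedding}, \text{the algebraic weight})$ is precisely what the Jacquet-functor construction of $S$ produces — shows $f$ is in the image of $S$. Comparing dimensions (both sides equal $hp^{c_{(1)}+2c_{(2)}+\dotsb+(n-1)c_{(n-1)}-\frac{n(n-1)}2}d_t$ by the count already done for $\dim S$) then gives equality rather than mere containment.

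The main obstacle I anticipate is the bookkeeping in the reverse inclusion: carefully matching the abstract representation-theoretic data of a finite-slope classical $f$ (its Bernstein component, its precise nebentypus at $p$, the conductors of the characters) against the combinatorially defined pair $(J,\chi)$ coming from $w$, and verifying that Roche's criterion together with the $\iw_p$-level constraint leaves exactly one possibility. In particular one must rule out that $\pi_{f,p}$ could be a principal series attached to a character with the same restriction to $T(\ints_p)$ but different conductor profile — this is exactly where the condition $\cond(\chi_i\chi_j^{-1})=\max(\cond(\chi_i),\cond(\chi_j))$ and the extra inequality $\cond(\chi_i)<2\cond(\chi_j)$ do their work, and getting the Mackey decomposition of $\ind_J^{\iw_p}\chi$ clean enough to conclude irreducibility (and hence that the $(J,\chi)$-eigenspace in $f$ is one-dimensional, matching the new-vector count) is the technical heart. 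Once irreducibility of $\ind_J^{\iw_p}\chi$ is in hand, the two-sided comparison is essentially a dimension count plus the classicality criterion, both of which are already available.
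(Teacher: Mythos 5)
Your forward inclusion is essentially fine, though the mechanism is simpler than you suggest: once $\ind_J^{\iw_p}\chi=U^{sm}(\chi)$ is irreducible (Proposition~\ref{irred}), Frobenius reciprocity applied to the $(J,\chi^{-1})$-isotypic vector of $W^{sm,c}(\chi)$ gives a literal containment $U^{sm}(\chi)\subset W^{sm,c}(\chi)$, so $S=(U^{sm}(\chi)\ten S_t)(G,U_0(p))$ sits inside the classical space $S_{t\chi,c}(G,U_0(p))$ by construction; no ``finite slope plus locally algebraic implies classical'' criterion is needed (and no such criterion is available here --- the classicality theorem in the paper is a small-slope statement). Finite slope on $S$ then follows from Propositions~\ref{manyps} and~\ref{jacquet}, as you say.

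The genuine gap is in your reverse inclusion. From a finite-slope classical eigenform $f$ you deduce (correctly, modulo a Jacquet-module/nebentypus compatibility you only gesture at) that $\pi_{f,p}$ is a principal series containing a $(J,\chi)$-vector, hence receives an embedding of $U^{sm}(\chi)$. But that is a statement about the representation generated by $f$, not about the vector $f$ itself: it does not show that the smooth component of $f$ lies in the $U^{sm}(\chi)$-part of $W^{sm,c}(\chi)$ rather than in $(U^{sm}(\chi))^\perp$, and ``running the construction of $S$ in reverse'' is exactly the unproved step. Your closing dimension count is circular --- the assertion that the finite-slope classical subspace has dimension $hp^{j(\chi)}d_t$ is the content of the theorem, not something ``already done'' (only $\dim S$ was computed); note also that the statement concerns the full finite-slope subspace, including generalized eigenvectors, so eigensystem-by-eigensystem matching would still require a multiplicity count (e.g.\ via Casselman's canonical lifting identifying the finite-slope part of the $(\Gam_0(c),\chi)$-isotypic space with the $\chi$-eigenspace of the Jacquet module), which you do not carry out. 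The paper closes this gap by a completely different, non-representation-theoretic mechanism: by Proposition~\ref{radindep} and Corollary~\ref{shrink}, any form with $U_pf\neq0$ must lie in the minimal-radius locally analytic space $\ssc_{t\chi,\un{c}^0}(G,U_0(p))$, and a direct computation shows $\ssc_{t\chi,\un{c}^0}(G,U_0(p))\cap S_{t\chi,c}(G,U_0(p))$ has dimension exactly $hd_tp^{j(\chi)}$, hence equals $S$; this is Proposition~\ref{notfs-v2}, and it is the ingredient your argument is missing. If you want to keep a purely representation-theoretic route, you would need to prove the analogue of Proposition~\ref{notfs-v2} directly, i.e.\ that eigenforms in $((U^{sm}(\chi))^\perp\ten S_t)(G,U_0(p))$ are killed by (or at least have infinite slope for) $U_p$.
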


To do this, we slightly refine the setup of the Bella\"iche-Chenevier construction of the eigenvariety in order to precisely define the sense in which $S$ is ``minimal level''. Consequently, our upper bound is the best possible with existing methods, except possibly for the size of $l(t)$. 

\section{Organization}

In Section~\ref{construction}, we describe the construction of the eigenvarieties we are interested in, primarily following Chenevier~(\cite{chenevier04}) and Bella\"iche-Chenevier~(\cite{bc09}), adding some extra details in places of particular importance to us. For example, we give a slightly more general definition of local analyticity of $p$-adic automorphic forms which allows different radii of analyticity for different coordinates and prove that it works, which aids in proving Theorem~\ref{classifyps-vague}.

In Section~\ref{localgweights}, we analyze the subspaces of classical automorphic forms of locally algebraic weights and the automorphic representations they generate, thus carrying out Steps \ref{subspace} and \ref{upinjective} above, and proving Theorem~\ref{classifyps}, a precise version of Theorem~\ref{classifyps-vague}.

In Section~\ref{boundproofs}, we carry out Johansson-Newton's method and Step \ref{finiteissmall} to prove Theorem~\ref{mybounds} and Theorem~\ref{unified-ubd}.

Finally, in Section~\ref{geometry}, we state and prove a precise version of Theorem~\ref{disconnect-intro} and discuss other geometric consequences of Theorem~\ref{mybounds}. Unlike in Liu-Wan-Xiao's setting, for higher-dimensional eigenvarieties, the lower and upper bounds do not match at any point on the Newton polygon, and we cannot expect them to, because there exist (probably) non-classical forms of slopes smaller than some classical forms. As a result, we cannot prove the equivalent of Conjecture~\ref{cmbk} for these higher-dimensional eigenvarieties. However, we can prove that certain boundary sections of the eigenvariety decompose into many disconnected components (with the caveat that we cannot verify that those sections are nonempty, although in fact we expect them to be everything).

\subsection*{Acknowledgments}

First and foremost, I would like to thank Mark Kisin for suggesting this problem and providing years of advice about it and math in general. I am also grateful to Tasho Kaletha for pointing me to Roche's work on principal series types, Ju-Lee Kim for clearing up my confusion about new vectors, Jessica Fintzen for improving Proposition~\ref{manyps} below (among other useful suggestions), and Yifeng Liu for helpful comments on a draft. I would also like to thank Jo\"el Bella\"iche, John Bergdall, Christopher Birkbeck, Erick Knight, Koji Shimizu, David Yang, Zijian Yao, and Yihang Zhu for explaining various facts about eigenvarieties and representation theory to me, and George Boxer, Frank Calegari, Matt Emerton, Benedict Gross, Michael Harris, Guy Henniart, Maxim Jeffs, David Loeffler, Barry Mazur, Naomi Sweeting, Richard Taylor, Pei-Yu Tsai, Eric Urban, Jared Weinstein, Yujie Xu, and Rong Zhou for various other helpful conversations.

This work was partially done during the support of the National Defense Science and Engineering Graduate Fellowship, and writing was completed during the support of the National Science Foundation Mathematical Sciences Postdoctoral Research Fellowship. 

\section{Bella\"iche-Chenevier eigenvarieties for definite unitary groups}
\label{construction}

Let $p$ be a prime. Let $q=4$ if $p=2$ and $q=p$ otherwise. In this section, we go through the construction of eigenvarieties for definite unitary groups in the language of Chenevier and Bella\"iche-Chenevier. In Section~\ref{padicautforms}, we define the groups and the spaces of $p$-adic automorphic forms we are interested in, notably including the spaces of classical forms whose interpolation was the original motivation for this construction. In Section~\ref{weightspace}, we describe the properties of the space of $p$-adic weights. In Section~\ref{plucker}, we define certain coordinates on spaces of functions on the Iwahori subgroup $\iw_p$, using a convenient hybrid of the language of Chenevier and Bella\"iche-Chenevier. In Section~\ref{subgpnotation}, we introduce systematic notation for certain subgroups of $\iw_p$. In Section~\ref{sheafpadicautforms}, we define the space of families of $p$-adic automorphic forms over weight space, along with subspaces of locally analytic families. In Section~\ref{upa}, we define $U_p$-operators and work through their various important properties in great detail. In Section~\ref{spectralvariety}, we define the desired eigenvarieties.

\subsection{$p$-adic automorphic forms}
\label{padicautforms}

Let $E$ be an imaginary quadratic field over $\rats$, and $D$ a central simple $E$-algebra of rank $n^2$ which has an involution $x\mapsto x^*$ extending the nontrivial automorphism $\sg$ of $E$ over $\rats$ (for example, $D$ could be $GL_n(E)$, in which case $x^*$ would be $\sg(x)^T$). Let $G/\rats$ be the group whose $R$-points, for a $\rats$-algebra $R$, are
\[
G(R)=\{x\in D\ten_\rats R\mid xx^*=1\}.
\]
$G(\rats_p)$ is isomorphic to $GL_n(\rats_p)$ if $p$ is split in $E$ (since then $E_p\cong\rats_p\dsm\rats_p$ with $\sg$ switching factors) and to $U_n(\rats_p)$ if $p$ is inert in $E$; we will assume that $p$ is split in $E$ and $G(\rats_p)\cong GL_n(\rats_p)$. Also, $G(\real)\cong U_{s,t}(\real)$ for $(s,t)$ the signature of $Q(x)=xx^*$; we will assume that $Q(x)$ has signature $(n,0)$ or $(0,n)$, so that $G(\real)$ is compact.

As usual, we write $B$ and $\bar{B}$ for the upper and lower triangular Borel subgroups of $GL_n$ respectively, $T$ for the diagonal torus, and $N$ and $\bar{N}$ for the upper and lower unipotent subgroups of $GL_n$ respectively.

Write $\aff=\aff_\rats$, $\aff_f$ for the finite adeles of $\aff$, and $\aff_f^p$ for the finite adeles trivial at $p$. Let $\usc$ be a compact open subgroup of $G(\aff_f)$ of the form $\usc_p\times \usc^p$, where $\usc_p$ is a compact open subgroup of $G(\rats_p)$ (called the wild level structure) and $\usc^p$ a compact open subgroup of $G(\aff_f^p)$ (called the tame level structure). We can now define $V$-valued automorphic forms for any $\usc_p$-module $V$.

\begin{defn}
If $V$ is a $k[\usc_p]$-module for any field $k$, write $V(G,\usc)$ for the $k$-vector space of maps
\[
f:G(\rats)\bsl G(\aff_f)\to V
\]
such that $f(xu)=u_p^{-1}f(x)$ for all $x\in G(\rats)\bsl G(\aff_f)$ and $u\in\usc$. Equivalently,
\[
V(G,\usc)=(\hom_{set}(G(\rats)\bsl G(\aff_f),k)\ten_k V)^{\usc}
\]
where the action of $\usc$ on $\hom_{set}(G(\rats)\bsl G(\aff_f), k)$ is right translation and the action on $V$ is through $\usc_p$. For any submonoid $\usc'\supseteq\usc$ of $G(\aff_f)$ which has an action on $V$ that is trivial for $\usc$, $V(G,\usc)$ is a $\usc'$-module with action $(uf)(x)=u_pf(xu)$.
\end{defn}

We will frequently express examples using the following notation: if $B\subseteq H$ are groups, $R$ is a ring, and $s:B\to R$ is a character, let
\[
\ind_B^{H}s=\{f:H\to R\mid f(hb)=s(b)f(h)\text{ for all }h\in H,b\in B\},
\]
and if $P$ is a property of some functions $f\in\ind_B^Hs$ which is invariant under left translation by $H$, let
\[
\ind_B^{H,P}s=\{f\in\ind_B^Hs\mid f\text{ has property }P\}.
\]
Then $\ind_B^{H,P}s$ is an $R$-module with a (left) action of $H$ given by $(hf)(x)=f(h^{-1}x)$ for all $h,x\in H$. Note that our left/right conventions for induction are unusual for convenience.

For example, if $k$ is a field, $t=(t_1,\dotsc,t_n)\in\ints^n$, and we write $\diag(d_1,\dotsc,d_n)$ for the diagonal matrix with entries $d_1,\dotsc,d_n$ along the diagonal, we can interpret $t$ as the character of the diagonal torus $T(k)$ of $GL_n(k)$ taking $\diag(d_1,\dotsc,d_n)$ to $\prod_{i=1}^n d_i^{t_i}$, and thus as the character of the upper triangular Borel $B(k)$ 
obtained by reducing to $T(k)$ and applying $t$. In the event that $t_1\ge\dotsb\ge t_n$, the $k$-vector space 
\[
\ind_{B(k)}^{GL_n(k),alg}t,
\]
where $alg$ stands for \emph{algebraic} (i.e. $f:GL_n(k)\to k$ comes from an element of $k[GL_n]$), is the irreducible algebraic representation of $GL_n$ over $k$ of highest weight $t$ (See Section 12.1.3 of \cite{gw09} and Proposition 2.2.1 of \cite{chenevier04}). We call this representation $S_t(k)$. Then $S_t(k)(G,\usc)$ is the space of classical $p$-adic automorphic forms on $G$ of weight $t$ and level $\usc$ with coefficients in $k$. 

One way to picture $V(G,\usc)$ is as follows. By the generalized finiteness of class groups (see Theorem 5.1 of \cite{borel63}), the set $G(\rats)\bsl G(\aff_f)/\usc$ is finite. Fix double coset representatives $x_1,\dotsc,x_h\in G(\aff_f)$. Then we have an isomorphism
\begin{align*}
V(G,\usc) & \isom  \bigoplus_{i=1}^h V^{x_i^{-1}G(\rats)x_i\cap\usc} \\
f & \mapsto  (f(x_1),\dotsc,f(x_h)).
\end{align*}

Because $G(\real)\cong U_n(\real)$ is compact, $G(\rats)$ is discrete in $G(\aff_f)$ (see e.g. Proposition 1.4 of~\cite{gross99} or Proposition 3.1.2 of~\cite{loeffler10}). 
Since in addition $\usc$ is compact, the group $x_i^{-1}G(\rats)x_i\cap\usc$ is always finite, and it is trivial if $\usc^p$ is sufficiently small. (For example, by Proposition 4.1.1 of~\cite{chenevier04}, there is an integer $e_n$ depending only on $n$ such that $x_i^{-1}G(\rats)x_i\cap\usc$ is guaranteed to be trivial if the image of $\usc^p$ in $G(\rats_l)$ is contained in $\Gam(l)=\{g\in GL_n(\ints_p)\mid g\equiv1\pmod{l}\}$ for some prime $l\nmid e_n$.) It is this fact that makes the construction of the eigenvariety for $G$ so sleek. 

When convenient, we will assume that $\usc^p$ is sufficiently small (sometimes called ``neat'' in the literature) and thus $V(G,\usc)\cong V^h$. As in Remark 2.14 of \cite{lwx17}, this does not affect our results, because the eigenvariety for any $\usc^p$ is a union of connected components of the eigenvariety for a sufficiently small subgroup of $\usc^p$. 

\subsection{Weight space}
\label{weightspace}

A \emph{weight} is a continuous character of $T(\ints_p)\cong(\ints_p^\times)^n$. Such a weight can be viewed as a character of $B(\ints_p)$ by reduction to $T(\ints_p)$. (In the introduction, we defined a weight instead to be a character of $(\ints_p^\times)^{n-1}$, that is, a character of $T(\ints_p)$ that is trivial on the last $\ints_p^\times$-factor. We will go back to restricting possible weights to the subset that is trivial on the last $\ints_p^\times$-factor whenever it is convenient, because any character of $T(\ints_p)$ can be twisted by a central character to one in this restricted subset, and central characters do not change spaces of automorphic forms in an interesting way.) 

The \emph{weight space} $\wsc^n$ is the rigid analytic space over $\rats_p$ such that for any affinoid $\rats_p$-algebra $A$, $\wsc^n(A)$ is the set of continuous characters $(\ints_p^\times)^n\to A^\times$. Let $\Del^n=((\ints/q\ints)^\times)^n$. We have
\[
(\ints_p^\times)^n\cong \Del^n\times(1+q\ints_p)^n
\]
so an $A$-point of $\wsc^n$ is determined by a character of $\Del^n$ and a character of $(1+q\ints_p)^n$. Furthermore, a character $s$ of $(1+q\ints_p)^n$ is determined by the values $T_i(s)=s(1,\dotsc,1,\exp(q),1,\dotsc,1)-1$ (where the $i$th entry is $\exp(q)$ and all the others are $1$), since $\exp(q)$ topologically generates $1+q\ints_p$. By Lemma 1 of~\cite{buzzard04}, the coordinates $(T_1,\dotsc,T_n)\in A^n$ come from an $A$-point of $\wsc^n$ precisely when they are topologically nilpotent. Thus $\wsc^n$ can be pictured as a finite disjoint union of $\phi(q)$ open unit polydiscs with coordinates $(T_1,\dotsc,T_n)$, one for each tame character of $\Del^n$. 

We use 
\[
[\cdot]:(\ints_p^\times)^n\to\ints_p\ps{(\ints_p^\times)^n}
\]
to denote the universal character of $(\ints_p^\times)^n$ and $\Lam^n$ to denote the Iwasawa algebra 
\[
\Lam^n=\ints_p\ps{(\ints_p^\times)^n}\cong\ints_p[\Del^n]\ten_{\ints_p}\ints_p\ps{(1+q\ints_p)^n}\cong\ints_p[\Del^n]\ten_{\ints_p}\ints_p\ps{T_1,\dotsc,T_n}
\]
where $T_i=[(1,\dotsc,1,\exp(q),1,\dotsc,1)]-1$ with the $\exp(q)$ in the $i$th position; then continuous homomorphisms $\chi:\Lam^n\to A$ are in bijection with $A$-points of $\wsc^n$ via $\chi\mapsto\chi\circ[\cdot]$. 

\begin{exmp}[dominant algebraic weights]
If $t_1\ge\dotsb\ge t_n$ are integers, the algebraic character $(d_1,\dotsc,d_n)\mapsto\prod_{i=1}^n d_i^{t_i}$ is a $\rats_p$-point of $\wsc^n$ with $T$-coordinates
\[
(\exp(t_1q)-1,\dotsc,\exp(t_nq)-1)
\]
such that the valuation of $T_i=\exp(t_iq)-1$ is
\[
v\left(\left(1+t_iq+\frac{(t_iq)^2}{2!}+\dotsb\right)-1\right)=v(t_iq).
\]
We remark that the weight polydisc in which this character appears is determined by $(t_1,\dotsc,t_n)\pmod{\phi(q)}$. 
\end{exmp}

If $\chi:\ints_p\to\cplx_p^\times$ is a finite-order character, we will borrow the following slightly nonstandard definition of the conductor $\cond(\chi)$ of $\chi$ from Section 3 of~\cite{roche98}: it is the least \emph{positive} integer $n$ such that $1+p^n\ints_p\subset\ker(\chi)$. Thus the conductor of the trivial character is $1$ but the conductor of any other character is the same as with the usual definition.

\begin{exmp}[locally algebraic weights]
\label{localgtcoords}
If $t_1\ge\dotsb\ge t_n$ are integers and $\chi_1,\dotsc,\chi_n$ are finite-order characters $\ints_p^\times\to\cplx_p^\times$, the ``locally algebraic'' character 
\[
(d_1,\dotsc,d_n)\mapsto\prod_{i=1}^n\chi_i(d_i)d_i^{t_i}
\] 
is a $\cplx_p$-point of $\wsc^n$. If $\chi_i$ is nontrivial with conductor $c$, we have
\[
v(T_i)=v(\chi_i(\exp(q))\exp(t_iq)-1)=\begin{cases}
v(t_iq) & \text{ if }p>2 \text{ and }c_i=1 \\
\frac{q}{p^{c_i-1}(p-1)} & \text{ if } p>2 \text{ and } c_i\ge2 \\
v(t_iq) & \text{ if }p=2 \text{ and }c_i=3 \\
\frac{q}{p^{c_i-1}(p-1)}=\frac1{2^{c_i-3}} & \text{ if }p=2 \text{ and }c_i\ge4.
\end{cases}
\]
For completeness, we quickly prove the second case above; the others are similar. The value $\chi_i(\exp(q))=\chi_i(\exp(p))$ is a primitive $p^{c_i}$th root of unity, say $\zt_{p^{c_i}}$. Let
\[
f(X)=\frac{X^{p^{c_i}}-1}{X^{p^{c_i-1}}-1}=\prod_{a\in(\ints/p^{c_i}\ints)^\times}(X-\zt_{p^{c_i}}^a)=X^{p^{c_i}-p^{c_i-1}}+X^{p^{c_i}-2p^{c_i-1}}+\dotsb+X^{p^{c_i-1}}+1.
\]
Then $f(1)=p=\prod_{a\in(\ints/p^{c_i}\ints)^\times}(1-\zt_{p^{c_i}}^a)$. Each term in the product has the same valuation, since they are Galois conjugate, and there are $p^{c_i-1}(p-1)$ such terms. So $v(\chi_i(\exp(q))-1)=\frac1{p^{c_i-2}(p-1)}$. The factor of $\exp(t_iq)$ has no effect since it is $1\pmod{p}$.
\end{exmp}

In general, if $A$ is a Banach $\rats_p$-algebra, we say that a character $s:\ints_p^\times\to A^\times$ is $c$-locally analytic if its restriction to $1+p^c\ints_p$ is given by a convergent power series with coefficients in $A$. Every continuous character $s$ is $c$-locally analytic for some $c$: let $T=s(\exp(q))-1$ and choose $c$ such that $|T^{q^{-1}p^c}|<q^{-1}$. Then we have
\[
s(z)=s\left(\exp(q)^{\frac1{q}\log z}\right)=[(1+T)^{q^{-1}p^c}]^{\frac1{p^c}\log z}=[1+((1+T)^{q^{-1}p^c}-1)]^{\frac1{p^c}\log z}
\]
if this converges. But by our choice of $c$, we have $|(1+T)^{q^{-1}p^c}-1|<q^{-1}$, and if $z\in(1+p^c\ints_p)$ then $\left|\frac1{p^c}\log z\right|\le1$. By Lemma 3.6.1 of~\cite{chenevier04}, this expression is a convergent power series in $z$. 


Naturally, if $s:(\ints_p^\times)^n\to A^\times$ is a character, we say that it is $(c_1,\dotsc,c_n)$-locally analytic if it is $c_i$-locally analytic in the $i$th factor.

If $W$ is any open affinoid subset of $\wsc$, we use 
\[
[\cdot]_W:(\ints_p^\times)^n\to\osc(W)^\times
\]
to denote the universal character of $(\ints_p^\times)^n$ with coefficients in $\osc(W)$. Note that $[\cdot]_W$ is $(c_1,\dotsc,c_n)$-locally analytic with $c_i$ depending on $\max_{s\in W(\cplx_p)}|T_i(s)|$. 

\subsection{Coordinates on spaces of functions on $\iw_p$}
\label{plucker} 

If $A$ is an affinoid $\rats_p$-algebra and $s:(\ints_p^\times)^n\to A^\times$ is a weight, we can view any function $f\in\ind_{B(\ints_p)}^{\iw_p}s$ as a function on $\ints_p^{n(n-1)/2}$ by restricting $f$ to the lower unipotent subgroup $\bar{N}$ and applying the map
\begin{align*}
\ints_p^{n(n-1)/2} &\to \bar{N} \\
\un{z}=(z_{ij}) &\mapsto \bar{N}(\un{z})=
\begin{pmatrix}
1 & 0 & 0 & \dotsb & 0 \\
pz_{21} & 1 & 0 & \dotsb & 0 \\
pz_{31} & pz_{32} & 1 & \dotsb & 0 \\
\vdots & \vdots & \vdots & \vdots & \vdots \\
pz_{n1} & pz_{n2} & pz_{n3} & \dotsb & 1
\end{pmatrix}
\in \bar{N}.
\end{align*}
We say that $f$ is \emph{continuous} if it is continuous as a function on $\ints_p^{n(n-1)/2}$ via $\un{z}\mapsto \bar{N}(\un{z})$. Then $\ssc_s:=\ind_{B(\ints_p)}^{\iw_p,cts}(s)$, where $cts$ stands for \emph{continuous}, is an $A[\iw_p]$-module. If $s^0:(\rats_p^\times)^n\to A^\times$ is the trivial extension of $s$ from $(\ints_p^\times)^n$ to $(\rats_p^\times)^n$ (that is, we set $s^0(d)=1$ for any $d\in(\rats_p^\times)^n$ whose entries are powers of $p$), $\ssc_s$ is isomorphic to $\ind_{B(\rats_p)}^{B(\rats_p)\iw_p,cts}(s^0)$ by restriction of functions from $B(\rats_p)\iw_p$ to $\iw_p$. Consequently it has an action by $B(\rats_p)\iw_p$.

It will be useful to write out the natural action of $\iw_p$ on $f\in\ssc_s$ more explicitly in terms of the coordinates $z_{ij}$. To do this, we interpret them as Pl\"ucker coordinates on $\bar{N}(\un{z})$. Recall that for any $1\le j\le n$ and subset $\sg$ of $\{1,\dotsc,n\}$ with $\#\sg=j$, the Pl\"ucker coordinate $Z_{j,\sg}$ associated to $(j,\sg)$ is the algebraic function on $GL_n$ given by the determinant of the minor associated to the rows corresponding to $\sg$ and the first $j$ columns. 

Give $\rats_p^n$ the standard basis $e_1,\dotsc,e_n$ and interpret elements of $\rats_p^n$ as \emph{horizontal} vectors. Give $\w^j(\rats_p^n)$ the corresponding standard basis
\[
\{e_\sg=e_{k_1}\w\dotsb\w e_{k_j}\mid \sg=\{k_1<\dotsb<k_j\}\subset\{1,\dotsc,n\}\},
\]
ordered lexicographically, and again interpret elements of $\w^j(\rats_p^n)$ as horizontal vectors. Let $1_j=\{1,\dotsc,j\}$. If $GL_n(\rats_p)$ acts on $\rats_p^n$ by right multiplication of horizontal vectors (the transpose of the standard action), and $\it_j:GL_n(\rats_p)\inj GL(\w^j(\rats_p^n))$ gives the induced action of $GL_n(\rats_p)$ on $\w^j(\rats_p^n)$ (where again $GL(\w^j(\rats_p^n))$ acts on $\w^j(\rats_p^n)$ by right multiplication of horizontal vectors), then for $x\in GL_n(\rats_p)$, $Z_{j,\sg}(x)$ is the coefficient of $e_{1_j}$ in $e_\sg\cdot\it_j(x)$, or the entry of $\it_j(x)$ in the $\sg$th row and first column. If $b=(b_{ij})\in B(\rats_p)$, $\it_j(b)$ is also upper triangular, so the coefficient of $e_{1_j}$ in $e_\sg\cdot\it_j(xb)=e_\sg\cdot\it_j(x)\cdot\it_j(b)$  is $Z_{j,\sg}(x)$ times the top left entry of $\it_j(b)$, which is $b_{11}\dotsb b_{jj}=:t_j(b)$. That is, we have
\[
Z_{j,\sg}(xb)=t_j(b)Z_{j,\sg}(x).
\]
So $Z_{j,\sg}$ is invariant under right multiplication by $N$, and $Z_{j,\sg/1}:=Z_{j,\sg}/Z_{j,1_j}$ is invariant under right multiplication by $B$. Let
\[
Z_{j,\sg}(u^{-1}x)=\sum_{\#\tau=j}a_{j,\sg,\tau}(u)Z_{j,\tau}(x)
\]
(note that $a_{j,\sg,\tau}(u)\in\ints_p$, and if $u\in\iw_p$ then $a_{j,1_j,1_j}(u)\in\ints_p^\times$) so that
\[
Z_{j,\sg/1}(u^{-1}x)=\frac{a_{j,\sg,1_j}+\sum_{\#\tau=j,\tau\neq 1_j}a_{j,\sg,\tau}(u)Z_{j,\tau/1}(x)}{a_{j,1_j,1_j}+\sum_{\#\tau=j,\tau\neq 1_j}a_{j,1_j,\tau}(u)Z_{j,\tau/1}(x)}.
\]
For $i\ge j$, let $\sg_{ij}=\{1,\dotsc,j-1,i\}$ (so $\sg_{jj}=1_j$); then we can see that
\[
Z_{j,\sg_{ij}}(\bar{N}(\un{z}))=\begin{cases}
Z_{j,1_j}(\bar{N}(\un{z}))=1 & \text{ if } i=j \\
pz_{ij} & \text{ if } i>j.
\end{cases}
\]
Thus $z_{ij}$, or technically $pz_{ij}$, is indeed a Pl\"ucker coordinate for $\bar{N}(\un{z})$ when $i>j$. Now using the Iwahori decomposition for $\iw_p$, let 
\[
u^{-1}\bar{N}(\un{z})=\bar{N}(\un{uz})T(u,\un{z})N(u,\un{z})
\]
where $T(u,\un{z})\in T$ and $N(u,\un{z})\in N$. So if $f\in\ssc_s$, we have
\begin{align*}
(uf)(\bar{N}(\un{z}))=f(u^{-1}\bar{N}(\un{z})) &=f(\bar{N}(\un{uz})T(u,\un{z})N(u,\un{z})) \\
&=s(T(u,\un{z}))f(\bar{N}(\un{uz})).
\end{align*} 
We wish to write $\un{uz}$ and $T(u,\un{z})$ in terms of $u$ and $\un{z}$. But we have
\begin{align*}
Z_{j,\sg_{ij}}(u^{-1}\bar{N}(\un{z})) &=Z_{j,\sg_{ij}}(\bar{N}(\un{uz})T(u,\un{z})N(u,\un{z})) \\
&=Z_{j,\sg_{ij}}(\bar{N}(\un{uz}))t_j(T(u,\un{z})).
\end{align*}
So in fact, setting $i=j$, we find
\[
t_j(T(u,\un{z}))=Z_{j,\sg_{jj}}(u^{-1}\bar{N}(\un{z}))=\sum_{\#\tau=j}a_{j,1_j,\tau}(u)Z_{j,\tau}(\bar{N}(\un{z}))
\]
where $Z_{j,\tau}(\bar{N}(\un{z}))$ is by definition a polynomial in the variables $\{z_{kl}\}_{l\le j,k>l}$ with coefficients in $p\ints_p$. Then when $i>j$, we have
\[
Z_{j,\sg_{ij}}(u^{-1}\bar{N}(\un{z}))=Z_{j,\sg_{ij}}(\bar{N}(\un{uz}))t_j(T(u,\un{z}))=p(uz)_{ij}Z_{j,\sg_{jj}}(u^{-1}\bar{N}(\un{z}))
\]
so
\[
p(uz)_{ij}=Z_{j,\sg_{ij}/1}(u^{-1}\bar{N}(\un{z}))=\frac{a_{j,\sg,1_j}+\sum_{\#\tau=j,\tau\neq 1_j}a_{j,\sg,\tau}(u)Z_{j,\tau/1}(\bar{N}(\un{z}))}{a_{j,1_j,1_j}+\sum_{\#\tau=j,\tau\neq 1_j}a_{j,1_j,\tau}(u)Z_{j,\tau/1}(\bar{N}(\un{z}))}
\]
where $Z_{j,\tau/1}(\bar{N}(\un{z}))$ is again a polynomial in the variables $\{z_{kl}\}_{l\le j,k>l}$ with coefficients in $p\ints_p$.


\subsection{Notation for subgroups of $\iw_p$}
\label{subgpnotation}

Since we will work with numerous subgroups of $\iw_p$, we will introduce some notation to identify them. If $\un{c}=(c_{ij})\in\ints_{\ge0}^{n\times n}$ is any $n\times n$ matrix of nonnegative integers, we will write
\[
\Gam(\un{c})=\{(x_{ij})\in GL_n(\ints_p)\mid p^{c_{ij}}\mid (x_{ij}-\del_{ij})\text{ for all }i,j\}
\]
(where $\del_{ij}$ is $1$ if $i=j$ and $0$ otherwise). One can compute that $\Gam(\un{c})$ is a group precisely when $c_{ij}\le c_{ik}+c_{kj}$ for all $i,j,k$. Note that this means that if $\Gam(\un{c})$ is a group, then so is $T(\ints_p)\Gam(\un{c})$. If we instead only have half a matrix of nonnegative integers $\un{c}=(c_{ij})_{n\ge i>j\ge1}\in\ints_{\ge0}^{n(n-1)/2}$, we will write
\[
\Gam_1(\un{c})=\{(x_{ij})\in\iw_p\mid v(x_{ij})\ge c_{ij}\forall i>j\text{ and }v(x_{ii}-1)\ge \min\{c_{ij}|j<i\}\cup\{c_{ji}|j>i\}\forall i\}
\]
\[
\Gam_0(\un{c})=\{(x_{ij})\in\iw_p\mid v(x_{ij})\ge c_{ij}\forall i>j\}=T(\ints_p)\Gam_1(\un{c})\subset\iw_p.
\]
\begin{defn}
We say that $\un{c}=(c_{ij})_{n\ge i>j\ge1}\in\ints_{\ge0}^{n(n-1)/2}$ is \emph{group-shaped} if $c_{ij}\le c_{ik}+c_{kj}$ for all $k$, where we set $c_{ab}$ to be $0$ if $a\le b$. 
\end{defn}

Thus $\Gam_1(\un{c})$ and $\Gam_0(\un{c})$ are subgroups whenever $\un{c}$ is group-shaped. 

\begin{defn}
We call an $\frac{n(n-1)}{2}$-tuple $\un{c}=(c_{ij})_{n\ge i>j\ge1}\in\ints_{\ge0}^{n(n-1)/2}$ \emph{compatible with} an $n$-tuple $(c_1,\dotsc,c_n)\in\ints_{\ge0}^n$ if $c_i\le \min\{c_{ij}|j<i\}\cup\{c_{ji}|j>i\}$ for all $i$. Equivalently, if we define $\un{c}'\in\ints_{\ge0}^{n\times n}$ by $c_{ij}'=c_{ij}$ for $i>j$, $c_{ii}'=c_i$, and $c_{ij}'=0$ for $i<j$, then $\Gam(\un{c}')$ is a group.
\end{defn}

Then we see that if $\chi=(\chi_1,\dotsc,\chi_n):T(\ints_p)\to\cplx^\times$ is a character of $T(\ints_p)$, and $\un{c}\in\ints_p^{n(n-1)/2}$, $\chi$ extends to a well-defined character of $T(\ints_p)\Gam_1(\un{c})=\Gam_0(\un{c})$, trivial on $\Gam_1(\un{c})$, whenever $\un{c}$ is compatible with $(\cond(\chi_1),\dotsc,\cond(\chi_n))$. 

In the calculations below, whenever we write $\Gam(\un{c})$, $\Gam_0(\un{c})$, or $\Gam_1(\un{c})$ for a matrix or half-matrix of nonnegative integers $\un{c}$, we will implicitly assume that $\un{c}$ has been chosen so that it is in fact a group.

Depending on convenience, we may also overload the above notation in the following ways. First, if $\un{r}=(r_{ij})\in[0,1]^{n\times n}$ is any $n\times n$ matrix of \emph{real numbers} in $[0,1]$, we will write
\[
\Gam(\un{r})=\{(x_{ij})\in GL_n(\ints_p)\mid |x_{ij}-\del_{ij}|\le r_{ij}\text{ for all }i,j\}.
\]
Then $\Gam(\un{r})$ is a group precisely when $r_{ij}\ge r_{ik}r_{kj}$ for all $i,j,k$. We may define $\Gam_1(\un{r}),\Gam_0(\un{r})$ similarly. Second, if $c\in\ints_{>0}$ is a single integer, we will write 
\[
\Gam(c)=\{(x_{ij})\in GL_n(\ints_p)\mid v(x_{ij}-\del_{ij})\ge c\text{ for all }i,j\}.
\]
This is always a group. We may define $\Gam_1(c),\Gam_0(c)$ similarly. Finally, if $r$ is a single real number in $[0,1]$, we will write $\Gam(r),\Gam_1(r),\Gam_0(r)$ for the obvious final abuse of the same notation.

\subsection{The sheaf of $p$-adic automorphic forms on weight space}
\label{sheafpadicautforms}

If $\un{c}=(c_{ij})_{n\ge i>j\ge1}\in\ints_{>0}^{n(n-1)/2}$, we say that $f\in\ssc_s$ is $\un{c}$-locally analytic if, for any $\un{a}=(a_{ij})\in\ints_p^{n(n-1)/2}$, the restriction of $f$ to 
\[
B(\un{a},\un{c})=\{z=(z_{ij})_{n\ge i>j\ge1}\in\ints_p^{n(n-1)/2}\mid z_{ij}\in a_{ij}+p^{c_{ij}}\ints_p\forall i,j\}
\]
is given by a convergent power series in the variables $z_{ij}$ with coefficients in $A$.

\begin{defn}
We call an $\frac{n(n-1)}2$-tuple $\un{c}=(c_{ij})_{n\ge i>j\ge1}\in\ints_{\ge0}^{n(n-1)/2}$ \emph{analytic-shaped} if we have $c_{(j+1)j}=c_{(j+2)j}=\dotsb=c_{nj}$ for all $j$ and $c_{nj}\ge c_{n(j+1)}$ for all $j$. (Note that if $\un{c}$ is analytic-shaped it is also group-shaped.) We call $\un{c}$ \emph{compatible with} an $n$-tuple $(c_1,\dotsc,c_n)\in\ints_{\ge0}^n$ if $c_j\le\min_{l\le j,k>l}c_{kl}$ for all $j$. That is, for each $j_0$, all the entries of $(c_{ij})$ corresponding to matrix entries appearing in or to the left of the $j_0$th column should be at least $c_{j_0}$.
\end{defn}

\begin{defn}
If $\un{c}\in\ints_{>0}^{n(n-1)/2}$ is analytic-shaped, we say that $s:(\ints_p^\times)^n\to A^\times$ is $\un{c}$-locally analytic if there is $(c_1,\dotsc,c_n)$ such that $s$ is $(c_1,\dotsc,c_n)$-locally analytic and $\un{c}$ is compatible with $(c_1,\dotsc,c_n)$.
\end{defn}

\begin{prop}
\label{locanpres}
If $s$ is $(c_1,\dotsc,c_n)$-locally analytic and $f\in\ssc_s$ is $\un{c}$-locally analytic for $\un{c}$ analytic-shaped and compatible with $(c_1,\dotsc,c_n)$ (so that $s$ is $\un{c}$-locally analytic), then $uf$ is also $\un{c}$-locally analytic for all $u\in\iw_p$. 
\end{prop}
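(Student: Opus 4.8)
The plan is to use the explicit formula for the $\iw_p$-action established in Section~\ref{plucker},
\[
(uf)(\bar N(\un z)) = s\bigl(T(u,\un z)\bigr)\, f\bigl(\bar N(\un{uz})\bigr),
\]
and to check that each of the two factors on the right is a convergent power series in the coordinates $z_{ij}$ on every ball $B(\un a,\un c)$; since a product of two such series is again one, this gives the proposition. So the argument splits into: (i) the coordinate change $\un z\mapsto\un{uz}$ sends $B(\un a,\un c)$ into $B(\un{ua},\un c)$ and is analytic there; and (ii) $\un z\mapsto s(T(u,\un z))$ is analytic on $B(\un a,\un c)$. Granting (i), the function $\un z\mapsto f(\bar N(\un{uz}))$ is the composite of the power-series expansion of $f$ on $B(\un{ua},\un c)$ (which exists because $f$ is $\un c$-locally analytic) with the analytic substitution of (i), hence is analytic on $B(\un a,\un c)$; combining with (ii) finishes the argument.

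For (i) I would work from the displayed formula for $p(uz)_{ij}$ in Section~\ref{plucker}. Its numerator and denominator are polynomials in the quantities $Z_{j,\tau/1}(\bar N(\un z))$, which involve only the variables $z_{kl}$ with $l\le j$ and have all coefficients in $p\ints_p$, while the constant term of the denominator is $a_{j,1_j,1_j}(u)\in\ints_p^\times$; hence the denominator is a unit power series, invertible by a geometric series converging on all of $\ints_p^{n(n-1)/2}$, and $(uz)_{ij}$ is a rigid-analytic function of $\{z_{kl}\}_{l\le j}$. Because $\un c$ is analytic-shaped, $c_{kl}=c_{nl}$ for $k>l$ and $c_{nl}\ge c_{nj}$ whenever $l\le j$; so on $B(\un a,\un c)$ every variable $z_{kl}$ occurring in the formula for $(uz)_{ij}$ satisfies $z_{kl}\equiv a_{kl}\pmod{p^{c_{ij}}}$. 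Comparing numerator and denominator with their values at $\un a$ shows each differs from that value by an element of $p^{c_{ij}+1}\ints_p$, whence $(uz)_{ij}\equiv(ua)_{ij}\pmod{p^{c_{ij}}}$; that is the claimed containment, and analyticity with respect to the normalized coordinates $(z_{kl}-a_{kl})/p^{c_{kl}}$ is then immediate from the power-series description.

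For (ii) I would use $t_j(T(u,\un z))=\sum_{\#\tau=j}a_{j,1_j,\tau}(u)Z_{j,\tau}(\bar N(\un z))$, which is again a unit power series in the variables $z_{kl}$ with $l\le j$, so $(T(u,\un z))_{jj}=t_j(T(u,\un z))/t_{j-1}(T(u,\un z))$ is a unit-valued analytic function on $B(\un a,\un c)$; since $\un c$ is compatible with $(c_1,\dots,c_n)$ we have $c_j\le c_{kl}$ for the relevant indices $l\le j$, so the same congruence bookkeeping places $(T(u,\un z))_{jj}/(T(u,\un a))_{jj}$ inside $1+p^{c_j}\ints_p$ throughout $B(\un a,\un c)$. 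Since $s$ is $(c_1,\dots,c_n)$-locally analytic, $s_j$ agrees on $1+p^{c_j}\ints_p$ with a power series convergent on that disc; composing it with the analytic function $(T(u,\un z))_{jj}/(T(u,\un a))_{jj}-1$, which has sup norm at most $p^{-c_j}$ on $B(\un a,\un c)$, yields analyticity of $\un z\mapsto s_j((T(u,\un z))_{jj})$, and hence of $s(T(u,\un z))=\prod_j s_j((T(u,\un z))_{jj})$.

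The unit-and-divisibility bookkeeping above is routine. The one place the hypotheses genuinely enter, and what I expect to be the conceptual crux, is the matching in (i) between the triangular dependence $(uz)_{ij}=(uz)_{ij}(\{z_{kl}\}_{l\le j})$ and the fact that, for analytic-shaped $\un c$, the radius $p^{-c_{nj}}$ attached to column $j$ is the coarsest among columns $1,\dots,j$: this is exactly what makes $B(\un a,\un c)$ stable up to translation under the coordinate change, and it is why ``analytic-shaped'' is defined the way it is. Everything else is formal manipulation of the Pl\"ucker-coordinate identities already in place.
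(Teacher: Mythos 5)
Your proof is correct and follows essentially the same route as the paper: both start from the Pl\"ucker-coordinate identity $(uf)(\bar N(\un z))=s(T(u,\un z))\,f(\bar N(\un{uz}))$, observe the triangular dependence of $(uz)_{ij}$ and of $t_j(T(u,\un z))/t_{j-1}(T(u,\un z))$ on $\{z_{kl}\}_{l\le j}$, and then use the analytic-shaped condition (to stabilize the ball under $\un z\mapsto\un{uz}$) and the compatibility condition (to make $s_j$ analytic on the relevant disc). The only difference is that you spell out the unit-denominator and congruence bookkeeping that the paper leaves implicit; nothing is missing or wrong.
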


\begin{proof}
By the calculations in Section~\ref{plucker}, we have $(uf)(\bar{N}(\un{z}))=s(T(u,\un{z}))f(\bar{N}(\un{uz}))$ where

---$(uz)_{ij}$ is a power series in the variables $\{z_{kl}\}_{l\le j,k>l}$;

---the $j$th diagonal entry of $T(u,\un{z})$, or $\frac{t_j(T(u,\un{z}))}{t_{j-1}(T(u,\un{z}))}$, is also a power series in the variables $\{z_{kl}\}_{l\le j,k>l}$. 

So if we restrict to $\un{z}\in B(\un{a},\un{c})$, the coefficient $(uz)_{ij}$ ranges over a ball of the form $a_{ij}'+p^{\min_{l\le j,k>l}c_{kl}}\ints_p$; since $\un{c}$ is analytic-shaped, we have $c_{ij}\le\min_{l\le j,k>l}c_{kl}$, and we conclude that $\un{uz}$ is also restricted to a ball of the form $B(\un{a}',\un{c})$. Thus $f(\bar{N}(\un{uz}))$ is analytic for $\un{z}\in B(\un{a},\un{c})$. Similarly, $\frac{t_j(T(u,\un{z}))}{t_{j-1}(T(u,\un{z}))}$ ranges over a ball of the form $a_{jj}''+p^{\min_{l\le j,k>l}c_{kl}}\ints_p$; since $c_j\le\min_{l\le j,k>l}c_{kl}$ and $s_j$ is analytic on $a_{jj}'+p^{c_j}\ints_p$, we conclude that $s_j(T(u,\un{z}))$ is analytic for $\un{z}\in B(\un{a},\un{c})$. Thus $(uf)(\bar{N}(\un{z}))$ is analytic for $\un{z}\in B(\un{a},\un{c})$, as desired.
\end{proof}

By Proposition~\ref{locanpres}, if $s$ is $\un{c}$-locally analytic with $\un{c}$ analytic-shaped, the space $\ssc_{s,\un{c}}=\ind_{B(\ints_p)}^{\iw_p,\un{c}-loc.an.}(s)$, where $\un{c}-loc.an.$ stands for $\un{c}$-locally analytic, is well-defined and has an action by $\iw_p$.


We let $\ssc=\ssc_{[\cdot]}=\ind_{B(\ints_p)}^{\iw_p,cts}([\cdot])$. If $\usc_p=\iw_p$, we call
\[
\ssc(G,\usc)=\ind_{B(\ints_p)}^{\iw_p,cts}([\cdot])(G,\usc)
\]
the space of integral $p$-adic automorphic forms for $G$ of level $\usc$; it has an action by $B(\rats_p)\usc$. This gives a sheaf on $\wsc$ whose fiber over $s$ is
\[
\ssc_s(G,\usc)=\ind_{B(\ints_p)}^{\iw_p,cts}(s)(G,\usc).
\]
Similarly, let $\ssc_{W,\un{c}}=\ssc_{[\cdot]_W,\un{c}}=\ind_{B(\ints_p)}^{\iw_p,\un{c}-loc.an.}([\cdot]_W)$ (for any $\un{c}$ such that $[\cdot]_W$ is $\un{c}$-locally analytic). If $\usc_p=\iw_p$, we call
\[
\ssc_{W,\un{c}}(G,\usc)=\ind_{B(\ints_p)}^{\iw_p,\un{c}-loc.an.}([\cdot]_W)(G,\usc)
\]
the space of $\un{c}$-locally analytic $p$-adic automorphic forms for $G$ of level $\usc$; this does \emph{not} have an action by $B(\rats_p)$, as some elements of $B(\rats_p)$ do not preserve the radius of local analyticity, but we will see in the next section that it has an action by a certain submonoid.

\subsection{The operators $U_p^a$}
\label{upa}

If $H$ is any locally compact, totally disconnected topological group, we write $\hsc(H)$ for the $k$-algebra of compactly supported, locally constant $k$-valued functions on $H$ with the convolution product
\[
(\phi_1\star\phi_2)(g)=\int_{h\in H}\phi_1(h)\phi_2(h^{-1}g)d\mu
\]
where $\mu$ is a Haar measure on $H$. This algebra usually has no identity, but many idempotents. If $K$ is a compact open subgroup of $H$, the idempotent $e_K=\frac{\one_K}{\mu(K)}$ projects $\hsc(H)$ onto the subalgebra $\hsc(H\sslash K)$ of functions that are both left- and right- invariant under $K$. If $V$ is a smooth $H$-module, it is an $\hsc(H)$-module via
\[
\phi(v)=\int_H \phi(h)(hv)dh
\]
and similarly $V^K$ is an $\hsc(H\sslash K)$-module.

In the particular case $H=B(\rats_p)\usc$, $V=\ssc_s(G,\usc)$, $K=\usc$, we can rephrase this as follows. We sometimes write $[\usc\zt\usc]$ for the element $\one_{\usc\zt\usc}$ of $\hsc(G(\aff_f)\sslash \usc)$. If $\zt_1,\dotsc,\zt_r$ are left $\usc$-coset representatives of $\usc\zt\usc$, so that
\[
\usc\zt\usc=\coprod_{i=1}^r\zt_i\usc,
\]
then for any $\phi\in \ssc_s(G,\usc)$ and $x\in G(\rats)\bsl G(\aff_f)$, we have
\begin{align*}
[\usc\zt\usc](\phi)(x) &=\int_{G(\aff_f)}[\usc\zt\usc](g)\cdot(g.\phi)(x)dg \\
&=\int_{\usc\zt\usc}g_p\phi(xg)dg
=
\sum_{i=1}^r(\zt_i)_p.\phi(x\zt_i).
\end{align*}
The following is Lemma 4.5.2 of~\cite{chenevier04}, or Proposition 3.3.3 of~\cite{loeffler10}.

\begin{lem}
\label{hcoords}
Fix coset representatives $x_1,\dotsc,x_h$ of $G(\rats)\bsl G(\aff_f)/\usc$, and thus an isomorphism $\ssc_s(G,\usc)\cong\ssc_s^h$. Then we have
\[
[\usc\zt\usc](\phi)(x_j)=\sum_{k=1}^h\sum_{i\mid\zt_i\in x_j^{-1}G(\rats)x_k\usc}(\zt_iu_{ij}^{-1})_p.\phi(x_k)
\]
for some $u_{ij}\in\usc$. That is, the action of $[\usc\zt\usc]$ on $\ssc_s(G,\usc)$ is of the form $\sum T_j\circ\sg_j$, where the $\sg_j$s are compositions of permutation operators on the entries of vectors in $\ssc_s^h$ with projections onto one of the coordinates, and the $T_j$s are diagonal translations of $\ssc_s^h$ by elements of $\usc\zt\usc$.
\end{lem}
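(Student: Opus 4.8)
The plan is to unwind the already-established formula
\[
[\usc\zt\usc](\phi)(x)=\sum_{i=1}^r(\zt_i)_p.\phi(x\zt_i)
\]
for $\phi\in\ssc_s(G,\usc)$ and $x\in G(\rats)\bsl G(\aff_f)$, specialized at $x=x_j$, and then repackage each term $\phi(x_j\zt_i)$ in terms of the chosen double-coset representatives $x_1,\dots,x_h$. First I would note that $x_j\zt_i\in G(\aff_f)$, so there is a unique $k=k(i,j)\in\{1,\dots,h\}$ and some $\gamma\in G(\rats)$, $u\in\usc$ with $x_j\zt_i=\gamma x_k u$; equivalently $\zt_i\in x_j^{-1}G(\rats)x_k\usc$, which is exactly the index set appearing in the statement. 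Since $\phi$ is left $G(\rats)$-invariant (it is a function on $G(\rats)\bsl G(\aff_f)$) and satisfies $\phi(yu')=(u'_p)^{-1}\phi(y)$ for $u'\in\usc$, we get $\phi(x_j\zt_i)=\phi(\gamma x_k u)=\phi(x_k u)=(u_p)^{-1}\phi(x_k)$. Writing $u=u_{ij}$ (it depends on $i$, $j$, and the choices of $\gamma$, $x_k$, but $k$ is determined), this yields
\[
[\usc\zt\usc](\phi)(x_j)=\sum_{i=1}^r (\zt_i)_p.\bigl((u_{ij})_p^{-1}.\phi(x_{k(i,j)})\bigr)
=\sum_{k=1}^h\ \sum_{i\,\mid\,\zt_i\in x_j^{-1}G(\rats)x_k\usc}(\zt_i u_{ij}^{-1})_p.\phi(x_k),
\]
which is the displayed identity. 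The second assertion is then just bookkeeping: fixing $k$, the inner sum over those $i$ with $\zt_i\in x_j^{-1}G(\rats)x_k\usc$ is, as a map $\ssc_s^h\to\ssc_s$, the composition of the coordinate projection $\phi\mapsto\phi(x_k)$ with a sum of left-translation operators by the elements $\zt_i u_{ij}^{-1}\in\usc\zt\usc\subset B(\rats_p)\usc$ (which act on $\ssc_s$ through their $p$-components); summing over $k$ and over the target index $j$ exhibits $[\usc\zt\usc]$ in the stated form $\sum_j T_j\circ\sigma_j$ after reindexing.

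There is essentially no hard analytic content here; the only point requiring care is the well-definedness of $u_{ij}$, i.e.\ checking that different admissible choices of $(\gamma,k,u)$ in the factorization $x_j\zt_i=\gamma x_k u$ do not change the operator. This is where one uses that the $x_k$ are honest double-coset representatives, so $k=k(i,j)$ is unique, and that $x_k^{-1}G(\rats)x_k\cap\usc$ is finite (indeed trivial for $\usc^p$ neat, as recalled in Section~\ref{padicautforms}): the resulting ambiguity in $u$ lies in this finite group, acts trivially on $\phi(x_k)\in\ssc_s^{x_k^{-1}G(\rats)x_k\cap\usc}$, and hence does not affect any term. So the main (very mild) obstacle is simply tracking the finiteness/invariance statement that makes the choice of $u_{ij}$ immaterial; everything else is a direct substitution into the coset-sum formula for $[\usc\zt\usc]$ together with the defining transformation property of elements of $\ssc_s(G,\usc)$.
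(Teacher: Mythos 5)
Your argument is correct and is essentially the paper's own proof: both write $x_j\zt_i=\gamma x_{k}u_{ij}$ with $\gamma\in G(\rats)$, $u_{ij}\in\usc$, and use left $G(\rats)$-invariance together with the $\usc$-transformation property of $\phi$ to obtain the displayed sum, the index set being exactly $\zt_i\in x_j^{-1}G(\rats)x_k\usc$. Your additional remark on the independence of the choice of $u_{ij}$ (via invariance of $\phi(x_k)$ under $x_k^{-1}G(\rats)x_k\cap\usc$) is correct but not strictly needed, since the lemma only asserts the formula for \emph{some} choice of $u_{ij}$.
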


\begin{proof}
Write $x_j\zt_i$ in the form $d_{ij}x_{k_{ij}}u_{ij}$ where $d_{ij}\in G(\rats)$ and $u_{ij}\in\usc$. Then
\begin{align*}
[\usc\zt\usc](\phi)(x_j) &=\sum_{i=1}^r(\zt_i)_p.\phi(x_j\zt_i) \\
&=\sum_{i=1}^r(\zt_i)_p.\phi(d_{ij}x_{k_{ij}}u_{ij})
=\sum_{i=1}^r(\zt_iu_{ij}^{-1})_p.\phi(x_{k_{ij}}).
\end{align*}
The values of $i$ for which $k_{ij}=k$ are those for which $\zt_i=x_j^{-1}dx_ku$ for some $d\in G(\rats)$ and $u\in\usc$, that is, $\zt_i\in x_j^{-1}G(\rats)x_k\usc$.
\end{proof}

If $a=(a_1,\dotsc,a_n)\in\ints^n$, we write
\[
u^a=\diag(p^{a_1},\dotsc,p^{a_n})
\]
and define the subgroup
\[
\Sg=\{u^a=\diag(p^{a_1},\dotsc,p^{a_n})\mid a=(a_1,\dotsc,a_n)\in\ints^n\}\subset GL_n(\rats_p)
\]
and its submonoids
\[
\Sg^-=\{u^a=\diag(p^{a_1},\dotsc,p^{a_n})\mid a_1\ge a_2\ge\dotsb\ge a_n\}\subset \Sg
\]
\[
\Sg^{--}=\{u^a=\diag(p^{a_1},\dotsc,p^{a_n})\mid a_1> a_2>\dotsb> a_n\}\subset \Sg^-.
\]
We will frequently choose $\zt$ to be an element of $\Sg^-$. Let
\[
U_p^a=[\usc\diag(p^{a_1},\dotsc,p^{a_n})\usc].
\]

\begin{prop}
\label{scale}
If $f\in\ssc_s$ and $a=(a_1,\dotsc,a_n)\in\ints^n$, $u^a$ acts on $f$ by $z_{ij}\mapsto p^{a_i-a_j}z_{ij}$.
\end{prop}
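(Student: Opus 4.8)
The plan is to reduce the statement to a direct matrix computation using the explicit description of the $\iw_p$-action on $\ssc_s$ set up in Section~\ref{plucker}. Recall from there that, identifying $f\in\ssc_s=\ind_{B(\ints_p)}^{\iw_p,cts}(s)$ with its restriction to $\bar N$ via $\un z\mapsto\bar N(\un z)$ and using the trivial extension $s^0$ to view $\ssc_s$ as $\ind_{B(\rats_p)}^{B(\rats_p)\iw_p,cts}(s^0)$, the action of an element $u$ of $B(\rats_p)\iw_p$ is computed by writing $u^{-1}\bar N(\un z)=\bar N(\un{uz})\,T(u,\un z)\,N(u,\un z)$ in the big cell (lower unipotent, diagonal, upper unipotent) and reading off $(uf)(\bar N(\un z))=s^0(T(u,\un z))\,f(\bar N(\un{uz}))$. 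Everything therefore comes down to carrying out this big-cell decomposition for $u=u^a$.

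First I would use that $u^a$ is diagonal: then $(u^a)^{-1}\bar N(\un z)$ is already lower triangular, so in its big-cell decomposition the upper-unipotent factor $N(u^a,\un z)$ is trivial and we simply have $(u^a)^{-1}\bar N(\un z)=\bar N(\un{u^az})\,T(u^a,\un z)$ with $T(u^a,\un z)\in T$. Comparing diagonal entries forces $T(u^a,\un z)=(u^a)^{-1}$, and comparing the $(i,j)$-entries for $i>j$ identifies $\bar N(\un{u^az})$ as $\bar N$ applied to the rescaled coordinates. Concretely, this last point is nothing but the conjugation identity that conjugating a lower-triangular unipotent matrix by $\diag(p^{a_1},\dots,p^{a_n})$ multiplies its $(i,j)$-entry by $p^{a_i-a_j}$, which is what produces the rescaling $z_{ij}\mapsto p^{a_i-a_j}z_{ij}$ of the coordinates (in the direction dictated by the action convention of Section~\ref{plucker}). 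Since $\bar N(\un z)\in\iw_p$ and the rescaling factors are integral on the monoid $\Sg^-$ relevant to $U_p$, the rescaled point stays in $\iw_p$, so $u^af$ is again an element of $\ssc_s$.

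It then remains only to observe that the torus factor contributes nothing: $T(u^a,\un z)=(u^a)^{-1}=\diag(p^{-a_1},\dots,p^{-a_n})$ has every diagonal entry equal to a power of $p$, and $s^0$ was defined precisely as the extension of $s$ to $(\rats_p^\times)^n$ that is trivial on such elements, so $s^0(T(u^a,\un z))=1$. Substituting back gives $(u^af)(\bar N(\un z))=f(\bar N(\un{u^az}))$ with $\un{u^az}$ the stated rescaling of $\un z$, which is the claim. I do not expect a genuine obstacle: this is a bookkeeping lemma recording the action of the simplest elements of the $U_p$-monoid, and all the substantive input — the big-cell decomposition and the construction of $s^0$ — is already in place from Section~\ref{plucker}. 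The only two points needing care are checking that the upper-unipotent part of the decomposition really vanishes (immediate from $u^a$ diagonal and $\bar N(\un z)$ lower triangular) and keeping track of the direction of the conjugation so that the exponent $a_i-a_j$ appears with the sign consistent with the conventions fixed earlier.
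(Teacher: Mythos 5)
Your argument is correct and is the same proof the paper gives, merely phrased in terms of the general big-cell decomposition from Section~\ref{plucker} rather than by writing out the matrix product explicitly: both exploit that $u^a$ is diagonal so the upper-unipotent factor vanishes, that conjugation rescales the $(i,j)$-entry by $p^{a_i-a_j}$, and that $s^0$ kills the leftover torus factor. (The remark about $\Sg^-$ keeping the rescaled point in $\iw_p$ is not needed for this proposition, which is stated for all $a\in\ints^n$; the action is well defined via the $B(\rats_p)\iw_p$-module structure regardless.)
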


\begin{proof}
We have
\[
f((u^a)^{-1}\bar{N}(z_{ij}))=f\left(
\begin{pmatrix}
p^{-a_1} & \dotsb & 0 \\
0 & \vdots & 0 \\
0 & \dotsb & p^{-a_n}
\end{pmatrix}
\begin{pmatrix}
1 & 0 & 0 & \dotsb & 0 \\
pz_{21} & 1 & 0 & \dotsb & 0 \\
pz_{31} & pz_{32} & 1 & \dotsb & 0 \\
\vdots & \vdots & \vdots & \vdots & \vdots \\
pz_{n1} & pz_{n2} & pz_{n3} & \dotsb & 1
\end{pmatrix}
\right)
\]
\[
=f\begin{pmatrix}
p^{-a_1} & 0 & 0 & \dotsb & 0 \\
p^{-a_2+1}z_{21} & p^{-a_2} & 0 & \dotsb & 0 \\
p^{-a_3+1}z_{31} & p^{-a_3+1}z_{32} & p^{-a_3} & \dotsb & 0 \\
\vdots & \vdots & \vdots & \vdots & \vdots \\
p^{-a_n+1}z_{n1} & p^{-a_n+1}z_{n2} & p^{-a_n+1}z_{n3} & \dotsb & p^{-a_n}
\end{pmatrix}
\]
\[
=f\left(\begin{pmatrix}
1 & 0 & 0 & \dotsb & 0 \\
p^{a_1-a_2+1}z_{21} & 1 & 0 & \dotsb & 0 \\
p^{a_1-a_3+1}z_{31} & p^{a_2-a_3+1}z_{32} & 1 & \dotsb & 0 \\
\vdots & \vdots & \vdots & \vdots & \vdots \\
p^{a_1-a_n+1}z_{n1} & p^{a_2-a_n+1}z_{n2} & p^{a_3-a_n+1}z_{n3} & \dotsb & 1
\end{pmatrix}
\begin{pmatrix}
p^{-a_1} & \dotsb & 0 \\
0 & \vdots & 0 \\
0 & \dotsb & p^{-a_n}
\end{pmatrix}
\right)
\]
\[
=f(\bar{N}(p^{a_i-a_j}z_{ij}))s^0(u^a)=f(\bar{N}(p^{a_i-a_j}z_{ij})).
\]
\end{proof}

\begin{cor}
If $f\in\ssc_s$ is $\un{c}$-locally analytic and $u^a\in \Sg^-$, then $u^af$ is also $\un{c}$-locally analytic. So translation by $\iw_pu^a\iw_p$ preserves $\ssc_{s,\un{c}}$ (and hence, by Lemma~\ref{hcoords}, $U_p^a$ preserves $\ssc_{s,\un{c}}(G,\usc)$).
\end{cor}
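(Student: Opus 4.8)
The plan is to deduce the corollary directly from Proposition~\ref{scale} together with Proposition~\ref{locanpres}, breaking $U_p^a$ into its constituent pieces via the Iwahori decomposition and Lemma~\ref{hcoords}. First I would observe that by Proposition~\ref{scale}, the action of $u^a$ on $f\in\ssc_s$ sends each Pl\"ucker-type coordinate $z_{ij}$ to $p^{a_i-a_j}z_{ij}$; when $u^a\in\Sg^-$ we have $a_i\ge a_j$ for all $i>j$, so each exponent $a_i-a_j$ is nonnegative. Thus if $f$ is $\un{c}$-locally analytic, then for any ball $B(\un a,\un c)$, the function $(u^af)(\bar N(\un z))=f(\bar N(p^{a_i-a_j}z_{ij}))$ is obtained by precomposing $f$ with the affine-linear contraction $z_{ij}\mapsto p^{a_i-a_j}z_{ij}$, which maps $B(\un a,\un c)$ into a single ball of the same shape $B(\un a',\un c)$ (since scaling by a nonnegative power of $p$ only shrinks a ball). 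Hence $u^af$ is again a convergent power series on each such ball, i.e.\ $\un c$-locally analytic. This is essentially the same contraction argument as in the proof of Proposition~\ref{locanpres}.

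Next I would handle the full double coset $\iw_pu^a\iw_p$. By Proposition~\ref{locanpres}, translation by elements of $\iw_p$ preserves $\un c$-local analyticity (for $\un c$ analytic-shaped), and by the previous paragraph translation by $u^a$ does too; writing a general element of $\iw_pu^a\iw_p$ as $u_1u^au_2$ with $u_1,u_2\in\iw_p$ and composing the three actions shows the double coset preserves $\ssc_{s,\un c}$. Concretely, choosing left $\iw_p$-coset representatives $\zt_1,\dots,\zt_r$ of $\iw_pu^a\iw_p$, each $\zt_i$ lies in $\iw_pu^a\iw_p$, so translation by each $\zt_i$ sends $\ssc_{s,\un c}$ to itself; it then remains to feed this through the automorphic-forms formalism.

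For the parenthetical claim about $U_p^a$, I would invoke Lemma~\ref{hcoords}: the operator $[\usc\,\diag(p^{a_1},\dots,p^{a_n})\,\usc]$ acting on $\ssc_s(G,\usc)\cong\ssc_s^h$ is a sum of terms $T_j\circ\sg_j$ where $\sg_j$ permutes and projects the $h$ coordinates (an operation that obviously preserves the property of each coordinate being $\un c$-locally analytic) and $T_j$ is translation of each coordinate by an element of $\usc\zt\usc=\iw_p u^a\iw_p$ (using $\usc_p=\iw_p$). Since each such translation preserves $\ssc_{s,\un c}$ by the above, so does $U_p^a$, and therefore it preserves $\ssc_{s,\un c}(G,\usc)=\ssc_{s,\un c}(G,\usc)^h\cap\ssc_s(G,\usc)$.

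I do not expect a serious obstacle here; the content is entirely in Propositions~\ref{scale} and~\ref{locanpres}, and the corollary is a bookkeeping combination of them. The only point requiring a little care is to make sure the scaling map $z_{ij}\mapsto p^{a_i-a_j}z_{ij}$ carries the specific ball $B(\un a,\un c)$ into a ball of exactly the same radius-vector $\un c$ — which is where the hypothesis $u^a\in\Sg^-$ (nonnegativity of $a_i-a_j$ for $i>j$) is used, exactly parallel to the role of "$\un c$ analytic-shaped" in Proposition~\ref{locanpres} — and to confirm that the permutation/projection operators $\sg_j$ in Lemma~\ref{hcoords} are visibly harmless for local analyticity. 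Both are routine.
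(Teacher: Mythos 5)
Your proposal is correct and follows essentially the same route as the paper: the paper's proof is exactly the observation that for $u^a\in\Sg^-$ one has $a_i-a_j\ge0$ for $i>j$, so the scaling $z_{ij}\mapsto p^{a_i-a_j}z_{ij}$ from Proposition~\ref{scale} carries each ball $B(\un{a},\un{c})$ into another ball of the same radii, with the $\iw_p$-factors and the passage to $U_p^a$ handled by Proposition~\ref{locanpres} and Lemma~\ref{hcoords} just as you describe. Your write-up merely makes the bookkeeping explicit; there is no gap.
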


\begin{proof}
When $u^a\in \Sg^-$, we have $a_i-a_j\ge0$ for all $i>j$; thus if $(z_{ij})$ varies in a ball $B(\un{a},\un{c})$, so does $(p^{a_i-a_j}z_{ij})=(u^az_{ij})$.
\end{proof}

Let $\un{c}^0\in\ints_{>0}^{n(n-1)/2}$ be minimal such that $s$ is $\un{c}^0$-locally analytic. 

\begin{cor}
\label{shrink}
If $f\in\ssc_s$ is $\un{c}$-locally analytic and $u^a\in \Sg^{--}$, then $u^af$ is $\un{c}^{--}:=(\max\{c_{ij}-1,c_{ij}^0\})$-locally analytic. So translation by $\iw_pu^a\iw_p$ takes $\ssc_{s,\un{c}}$ into $\ssc_{s,\un{c}^{--}}$ (and hence, by Lemma~\ref{hcoords}, $U_p^a$ takes $\ssc_{s,\un{c}}(G,\usc)$ into $\ssc_{s,\un{c}^{--}}(G,\usc)$).
\end{cor}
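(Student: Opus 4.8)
\textbf{Proof proposal for Corollary~\ref{shrink}.}

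The plan is to run the exact same argument as in the previous corollary, using Proposition~\ref{scale}, but tracking what happens to each radius of analyticity more carefully now that the scaling factors $p^{a_i-a_j}$ are \emph{strictly} contracting on the off-diagonal. First I would recall from Proposition~\ref{scale} that $u^a$ acts on $f$ by $z_{ij}\mapsto p^{a_i-a_j}z_{ij}$, so that if $f$ is analytic on a ball $B(\un a,\un c)$ then $u^af$ is analytic on the preimage of that ball under this substitution; the content is to see that this preimage contains a ball of the claimed radius $\un c^{--}$.

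The key step is the arithmetic of the exponents. For $u^a\in\Sg^{--}$ we have $a_i-a_j\ge1$ for every $i>j$, so the map $z_{ij}\mapsto p^{a_i-a_j}z_{ij}$ sends a residue disc of radius $p^{-(c_{ij}-1)}$ into a disc of radius $p^{-c_{ij}}$; contrapositively, a function analytic on discs of radius $p^{-c_{ij}}$ in the variable $p^{a_i-a_j}z_{ij}$ pulls back to a function analytic on discs of radius $p^{-(c_{ij}-1)}$ in $z_{ij}$. Hence $u^af$ is $(c_{ij}-1)$-locally analytic in each off-diagonal coordinate, at least as long as $c_{ij}-1\ge c^0_{ij}$; if $c_{ij}-1<c^0_{ij}$ we cannot do better than $c^0_{ij}$, because the dependence of $s$ on $T(u,\un z)$ (which is a genuine input to the formula $(uf)(\bar N(\un z))=s(T(u,\un z))f(\bar N(\un{uz}))$ from Section~\ref{plucker}) only allows analyticity down to the radius at which $s$ itself is analytic. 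Taking the coordinatewise maximum $c^{--}_{ij}=\max\{c_{ij}-1,c^0_{ij}\}$ records exactly this. I should also note that $\un c^{--}$ is still analytic-shaped and compatible with whichever $(c_1,\dotsc,c_n)$ witnessed $\un c$-local analyticity of $s$ (since $c^0$ is and the coordinatewise max of two analytic-shaped compatible tuples is analytic-shaped and compatible), so $\ssc_{s,\un c^{--}}$ is a legitimate space; here I would lean on the shape analysis already set up before Proposition~\ref{locanpres}.

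Once $u^af\in\ssc_{s,\un c^{--}}$ is established for scalar $u^a\in\Sg^{--}$, the extension to translation by the double coset $\iw_p u^a\iw_p$ follows by combining this with the previous corollary (which handles $\iw_p$-translation within a fixed radius) — write any element of $\iw_p u^a\iw_p$ as $u_1 u^a u_2$ with $u_i\in\iw_p$, apply the previous corollary to absorb $u_2$ keeping radius $\un c$, apply the present computation to $u^a$ to drop to radius $\un c^{--}$, then apply the previous corollary again to absorb $u_1$ keeping radius $\un c^{--}$. Finally, invoking Lemma~\ref{hcoords}, which expresses $U_p^a$ on $\ssc_s(G,\usc)$ as a sum of permutation-and-projection operators composed with diagonal translations by elements of $\usc u^a\usc\subset\iw_p u^a\iw_p$, gives that $U_p^a$ carries $\ssc_{s,\un c}(G,\usc)$ into $\ssc_{s,\un c^{--}}(G,\usc)$. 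I do not expect any serious obstacle: the only subtlety is bookkeeping — making sure the $\max$ with $\un c^0$ is genuinely needed only because $s$'s own analyticity radius is a hard floor, and confirming $\un c^{--}$ remains analytic-shaped so that the target space is even defined.
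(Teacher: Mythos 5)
Your argument is correct and takes essentially the same route as the paper, whose proof is exactly the one-line observation via Proposition~\ref{scale} that since $a_i-a_j\ge1$ for all $i>j$ when $u^a\in\Sg^{--}$, the substitution $z_{ij}\mapsto p^{a_i-a_j}z_{ij}$ gains one unit of analyticity radius in each off-diagonal coordinate, so balls of radius $\un{c}^{--}$ map into balls of radius $\un{c}$ where $f$ is analytic. Your additional bookkeeping (the floor at $\un{c}^0$ so that $s$ is $\un{c}^{--}$-locally analytic and the target space is defined and $\iw_p$-stable, plus the double-coset decomposition feeding into Lemma~\ref{hcoords}) is consistent with how the paper states and uses the corollary.
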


\begin{proof}
When $u^a\in \Sg^{--}$, we have $a_i-a_j>0$ for all $i>j$; thus if $(z_{ij})$ varies in a ball $B(\un{a},\un{c})$, then $(p^{a_i-a_j}z_{ij})=(u^az_{ij})$ varies in a smaller ball $B(\un{a}',\un{c+1})$.
\end{proof}

$\ssc_{s,\un{c}}$ is an orthonormalizable $A$-module, for which we choose the following orthonormal basis: for each $\un{a}\in\prod_{n\ge i>j\ge1} \ints_p/p^{c_{ij}}\ints_p$, we choose the set of monomials $\prod_{n\ge i>j\ge1}z_{ij}^{e_{ij}}$ as an orthonormal basis for the restriction of $\ssc_{s,\un{c}}$ to $B(\un{a},\un{c})$; then for $\ssc_{s,\un{c}}$, we may choose as orthonormal basis the set of monomials $\prod_{n\ge i>j\ge1}(z_{ij}^{\un{a}})^{e_{ij}}$, with one copy for each $\un{a}\in\prod_{n\ge i>j\ge1} \ints_p/p^{c_{ij}}\ints_p$.

\begin{cor}
When $a\in \Sg^{--}$, the operator of translation by $u^a$ acts \emph{completely continuously} on $\ssc_{s,\un{c}}$, in the sense that it is a uniform limit of operators with finite-dimensional images. So by Lemma~\ref{hcoords}, $U_p^a$ is completely continuous on $\ssc_{s,\un{c}}(G,\usc)$. 
\end{cor}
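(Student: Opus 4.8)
The plan is to reduce the statement to the single claim that translation by $u^a$ is a completely continuous endomorphism of the orthonormalizable Banach $A$-module $\ssc_{s,\un c}$, and then to prove that by a direct estimate on the matrix coefficients of $u^a$ in the monomial basis; the assertion for $U_p^a$ will follow formally.

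For the reduction I would use that the completely continuous operators on an orthonormalizable Banach $A$-module form a two-sided ideal, closed under finite $A$-linear combinations. By Lemma~\ref{hcoords}, under the isomorphism $\ssc_{s,\un c}(G,\usc)\cong\ssc_{s,\un c}^h$ the operator $U_p^a$ is a finite sum $\sum_j T_j\circ\sg_j$, where each $\sg_j$ is a composition of coordinate permutations with a projection onto one coordinate (hence bounded) and each $T_j$ is the diagonal operator given by translation by some $\mu\in\usc u^a\usc$; writing the $p$-component of $\mu$ as $k_1u^ak_2$ with $k_1,k_2\in\iw_p$ exhibits translation by $\mu$ as a composition of the bounded $\iw_p$-action with translation by $u^a$. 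So it is enough to show translation by $u^a$ is completely continuous on $\ssc_{s,\un c}$.

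For that core step I would work in the orthonormal basis of monomials $m_{\un a,\un e}=\prod_{n\ge i>j\ge1}(z_{ij}^{\un a})^{e_{ij}}$, indexed by $\un a$ in the \emph{finite} set $\prod_{n\ge i>j\ge1}\ints_p/p^{c_{ij}}\ints_p$ and by exponent vectors $\un e$. By Proposition~\ref{scale}, translation by $u^a\in\Sg^{--}$ acts by $z_{ij}\mapsto p^{d_{ij}}z_{ij}$ with every $d_{ij}\ge1$ (for $i>j$). Chasing this through the basis: the restriction of $u^af$ to a ball $B(\un a',\un c)$ depends only on the restriction of $f$ to $B(\un a'',\un c)$, where $a''_{ij}\equiv p^{d_{ij}}a'_{ij}\pmod{p^{c_{ij}}}$, and under the substitution the generator $z_{ij}^{\un a''}$ goes to $p^{d_{ij}}z_{ij}^{\un a'}$ plus an integer constant; expanding the product shows that $u^am_{\un a'',\un e}$ is an $A$-linear combination of monomials $m_{\un a',\un k}$ with $\un k\le\un e$ componentwise, in which the coefficient of $m_{\un a',\un k}$ is divisible by $p^{\sum_{i>j}d_{ij}k_{ij}}$, hence by $p^{|\un k|}$. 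Letting $P_N$ be the (finite-rank, since there are finitely many $\un a$) projection onto the span of the $m_{\un a,\un k}$ with $|\un k|\le N$, it follows that every matrix coefficient of $(1-P_N)u^a$ lies in $p^{N+1}\ints_p$-multiples, so $\|u^a-P_Nu^a\|\le p^{-N-1}$; thus $u^a$ is a uniform limit of finite-rank operators, i.e.\ completely continuous, and therefore so is $U_p^a$.

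I expect the main obstacle to be the bookkeeping in the third paragraph: correctly tracking how the basepoint shifts $p^{d_{ij}}a'_{ij}\bmod p^{c_{ij}}$ interact with the change of monomial basis, and checking that the lower-degree correction terms (those with $\un k<\un e$ componentwise) still acquire valuation at least $|\un k|$ and are not spoiled by the integer constants coming from the substitution. Once one observes that the scaling exponents $d_{ij}$ produced by $\Sg^{--}$ are all positive, this estimate is forced. (When $\un c$ strictly exceeds the minimal radius $\un c^0$, an alternative is to factor translation by $u^a$ through the inclusion $\ssc_{s,\un{c}^{--}}\hookrightarrow\ssc_{s,\un c}$ of Corollary~\ref{shrink} and note that this inclusion is completely continuous by the identical target-degree estimate; but the direct computation above has the advantage of applying for every admissible $\un c$.)
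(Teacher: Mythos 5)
Your proposal is correct and follows essentially the same route as the paper: reduce via Lemma~\ref{hcoords} and boundedness of the $\iw_p$-action to translation by $u^a$ alone, then use Proposition~\ref{scale} in the orthonormal monomial basis to see that matrix entries in rows of total degree $N$ are divisible by $p^N$, so $u^a$ is a uniform limit of finite-rank truncations. Your third paragraph merely spells out, with the basepoint shifts and binomial expansion made explicit, the scaling estimate that the paper's proof states in one line.
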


\begin{proof}
By Proposition~\ref{scale}, $u^a$ scales $\prod_{n\ge i>j\ge1}(z_{ij}^{\un{a}})^{e_{ij}}$ by $\prod_{n\ge i>j\ge1}p^{(a_i-a_j)e_{ij}}$, which goes to $\infty$ as any $e_{ij}$ goes to $\infty$. Furthermore, since the formulas in Section~\ref{plucker} all have integer coefficients, it is clear that translation by $\iw_p$ is norm $1$.
\end{proof}

Since $U_p^a$ is completely continuous on $\ssc_{s,\un{c}}(G,\usc)$, for any $k$, the matrix of the action of $U_p^a$ (in any basis) has a finite number of nonzero rows mod $p^k$. Suppose that this matrix has $r_k$ rows that are zero mod $p^k$ but nonzero mod $p^{k+1}$. Then for any $N\ge r_0+r_1+\dotsb+r_k$, the coefficient of $X^N$ in the characteristic power series
\[
P_{s,\un{c}}^a(X)=\det(1-XU_p^a|\ssc_{s,\un{c}}(G,\usc))
\]
of $U_p^a$ acting on $\ssc_{s,\un{c}}(G,\usc)$, being a linear combination of minors of size $N\ge r_0+r_1+\dotsb+r_k$, is divisible by $r_1+2r_2+\dotsb+kr_k$. Since this lower bound grows faster than any linear function of $N$, $P_{s,\un{c}}^a(X)$ is an entire function of $X$.

\begin{prop}
\label{radindep}
$P_{s,\un{c}}^a(X)$ is independent of $\un{c}$. (So we will henceforth call it $P_s^a(X)$.)
\end{prop}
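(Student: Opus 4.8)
The plan is to show that for two admissible radii $\un{c}$ and $\un{c}'$ — both analytic-shaped and compatible with the same $(c_1,\dots,c_n)$ making $s$ locally analytic — the operators $U_p^a$ on $\ssc_{s,\un{c}}(G,\usc)$ and $\ssc_{s,\un{c}'}(G,\usc)$ have the same characteristic power series. The point is that $P_{s,\un{c}}^a(X)$ and $P_{s,\un{c}'}^a(X)$ depend only on the action of $U_p^a$ up to the appropriate notion of equivalence for completely continuous operators (conjugacy, or more generally the trace-class theory of Serre), so it suffices to relate the two spaces by $U_p^a$-compatible maps.

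The key steps I would carry out are as follows. \textbf{First}, reduce to comparing $\un{c}$ with a single smaller radius: it is enough to show $P_{s,\un{c}}^a = P_{s,\un{c}'}^a$ whenever $c_{ij}' \ge c_{ij}$ for all $i,j$ (i.e. $B(\un{a},\un{c}')$-balls refine $B(\un{a},\un{c})$-balls), since any two admissible $\un{c}$'s are both refined by their coordinatewise maximum, which is still analytic-shaped and compatible. \textbf{Second}, observe that there is a natural restriction (really inclusion) map $\iota\colon \ssc_{s,\un{c}} \hookrightarrow \ssc_{s,\un{c}'}$: a $\un{c}$-locally analytic function is automatically $\un{c}'$-locally analytic, and this is $\iw_p$-equivariant by Proposition~\ref{locanpres} applied on both sides, hence induces $\iota\colon \ssc_{s,\un{c}}(G,\usc) \hookrightarrow \ssc_{s,\un{c}'}(G,\usc)$ commuting with $U_p^a$ via Lemma~\ref{hcoords}. \textbf{Third}, produce a map the other way: by Corollary~\ref{shrink}, for $u^a \in \Sg^{--}$ translation by $u^a$ takes $\ssc_{s,\un{c}'}$ into $\ssc_{s,\un{c}'^{--}}$, and by iterating (replacing $a$ by a large enough multiple, or using that $a_i - a_j \ge 1$ strictly) one lands inside $\ssc_{s,\un{c}}$; concretely, $U_p^a$ itself factors as $\ssc_{s,\un{c}'}(G,\usc) \xrightarrow{\ } \ssc_{s,\un{c}}(G,\usc) \xrightarrow{\iota} \ssc_{s,\un{c}'}(G,\usc)$, with the first arrow being $U_p^a$ and $\iota$ the inclusion, and symmetrically $U_p^a$ on $\ssc_{s,\un{c}}(G,\usc)$ factors through $\ssc_{s,\un{c}'}(G,\usc)$. \textbf{Fourth}, conclude: writing $U_p^a = \iota \circ V$ on one space and $U_p^a = V \circ \iota$ on the other (where $V$ is the "$U_p^a$ composed with the radius-shrinking") gives that the two operators are of the form $\iota V$ and $V \iota$, and for completely continuous operators $\det(1 - X\,\iota V) = \det(1 - X\, V\iota)$ by the standard fact that characteristic series are insensitive to the order of composition (Serre's trace-class formalism, or a direct computation with the Fredholm determinant); this equality is exactly $P_{s,\un{c}'}^a(X) = P_{s,\un{c}}^a(X)$.

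The main obstacle — really the only subtle point — is making precise the factorization in the third step: one must check that $U_p^a$ genuinely carries $\ssc_{s,\un{c}'}$ strictly into the smaller $\ssc_{s,\un{c}}$ and not merely into something comparable, which requires $a \in \Sg^{--}$ (so that all off-diagonal exponents $a_i - a_j$ are $\ge 1$, killing one unit of radius via Corollary~\ref{shrink}) together with iterating enough times to cover the gap $\max_{ij}(c_{ij}' - c_{ij})$; since $U_p^a$ for non-strictly-decreasing $a$ can be written as a product of $U_p^{a'}$ with $a' \in \Sg^{--}$ times a group element, this is not a real restriction. The verification that the composites $\iota V$ and $V\iota$ actually reconstruct $U_p^a$ on the nose (rather than up to a compact error) is where one uses that $\iota$ is a genuine inclusion of function spaces, not just a map, so that $U_p^a$ on $\ssc_{s,\un{c}}(G,\usc)$ is literally the restriction of $U_p^a$ on $\ssc_{s,\un{c}'}(G,\usc)$; I would spell this out carefully but expect no real difficulty.
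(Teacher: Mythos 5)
Your proposal is essentially the paper's own proof: the paper applies Corollary 2 of Proposition 7 of Serre's paper (the fact that $\det(1-Xuv)=\det(1-Xvu)$ for a completely continuous factorization) to the map $U_p^a:\ssc_{s,\un{c}}(G,\usc)\to\ssc_{s,\un{c}^{--}}(G,\usc)$ from Corollary~\ref{shrink} together with the obvious inclusion in the other direction, which is exactly your $\iota V$ versus $V\iota$ argument. The only cosmetic difference is that rather than iterating the operator (replacing $a$ by a multiple) to bridge a large gap in radii, it is cleaner to compare $\un{c}$ with $\un{c}^{--}$ one step at a time and iterate the resulting equality of characteristic series down to the minimal radius $\un{c}^0$.
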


\begin{proof}
This follows from applying Corollary 2 of Proposition 7 of~\cite{serre62} to the map $U_p^a:\ssc_{s,\un{c}}(G,\usc)\to\ssc_{s,\un{c}^{--}}(G,\usc)$ from Corollary~\ref{shrink} and the obvious inclusion $\ssc_{s,\un{c}^{--}}(G,\usc)\inj \ssc_{s,\un{c}}(G,\usc)$. 
\end{proof}


Let $U_p^\Sg$ be the subring of $\hsc(G(\aff_f)\sslash \usc)$ generated by the elements $U_p^a$ for $a\in \Sg^-$ and their inverses (which exist, as discussed in Section 6.4.1 of \cite{bc09}). By Proposition 6.4.1 of \cite{bc09}, the map from $k[\Sg]$ to $U_p^\Sg$ sending $u^a$ to $U_p^b(U_p^c)^{-1}$ where $u^b,u^c$ are any elements of $\Sg^-$ such that $u^a=u^b(u^c)^{-1}$ is a well-defined isomorphism of rings. So, in particular, $U_p^\Sg$ is abelian. Let $\hsc$ be a subalgebra of $\hsc(G(\aff_f)\sslash\usc)$ given by the product of $\ints[U_p^\Sg]$ at $p$ and some commutative subalgebra of $\hsc(G(\aff_f^p)\sslash\usc^p)$ away from $p$.

We write $u_i$ for the image of $\diag(1,\dotsc,1,p,1,\dotsc,1)\in k[\Sg]$ in $U_p^\Sg$. If $f$ is an element of an $\hsc$-module $S$ (such as $\ssc_{s,\un{c}}(G,\usc)$) that is a generalized simultaneous eigenvector for $\hsc$, let $u_i(f)=\lam_i f$. We call these the $\lam$-values associated to $f$. We call the subspace generated by all the generalized simultaneous eigenvectors whose associated $\lam$-values are nonzero the finite-slope subspace of $S$, and we denote it by $S^{fs}$.

Unless otherwise specified, we will generally set $\usc$ to be a compact open subgroup of $G(\aff_f)$ given by the product of $\iw_p$ at $p$ and a fixed tame level structure away from $p$ chosen so that $x^{-1}G(\rats)x\cap U_0(p)=1$ for all $x$ (the condition of being ``sufficiently small'' or ``neat'' as described at the end of Section~\ref{padicautforms}). Call this subgroup $U_0(p)$. (Note that for the same reason as in Proposition~\ref{levels} below, our choice of $\iw_p$ as the wild level structure does not actually affect $P_s^a(X)$.)

\subsection{The eigenvariety}
\label{spectralvariety}

Given our setup so far, the eigenvariety is easy to define. For a given $u^a\in\Sg^{--}$, let $\zsc^a$ be the subvariety of $\wsc\times\gra_m$ which, in any subset $W\times\gra_m$ where $W\subset\wsc$ is open affinoid, is cut out by the characteristic power series $P_W^a(X)$ of $U_p^a$ acting on $\ssc_W(G,U_0(p))$. Let $w:\zsc^a\to\wsc$ be the first projection (weight) map, and $a_p^a:\zsc^a\to\gra_m$ the \emph{inverse} of the second projection ($U_p^a$-eigenvalue) map. Then for any point $z\in\zsc^a$, $a_p^a(z)$ is a nonzero eigenvalue of $U_p^a$ acting on $\ssc_{w(z)}(G,U_0(p))$, and for any $w\in\wsc$, all nonzero eigenvalues of $U_p^a$ acting on $\ssc_w(G,U_0(p))$ can be found in the fiber of $\zsc^a$ over $w$. We call $\zsc^a$ the spectral variety associated to $U_p^a$. 

It is convenient to fix a particular choice of $u^a\in\Sg^{--}$; we will choose $a=(n-1,n-2,\dotsc,1,0)$. From now on, we will write $U_p=U_p^{(n-1,n-2,\dotsc,1,0)}$ and $\zsc=\zsc^{(n-1,n-2,\dotsc,1,0)}$. We call an eigenform $f\in\ssc_w(G,U_0(p))$ finite-slope if $U_pf\neq0$ (i.e. the valuation, or slope. of the $U_p$-eigenvalue is finite, and $f$ appears on the eigenvariety), and infinite-slope otherwise. 


Since $\hsc$ is commutative, we can construct the space $\dsc$ whose points correspond to systems of eigenvalues of all Hecke operators in $\hsc$, including in particular all $U_p^a$s simultaneously, by simply taking $\dsc$ to be the finite cover of $\zsc$ which, over an affinoid $W\subset\wsc$, is given by the MaxSpec of the image of $\hsc\ten\Lam^n$ in the endomorphism ring of $\ssc_W(G,U_0(p))$. Then $\dsc$ inherits the weight map $w:\zsc^a\to\wsc$ and each eigenvalue map $a_p^a:\dsc\to\gra^m$. Note that $\dsc\to\zsc^a$ is degree $1$ away from multiple roots of $P_W^a(X)$, hence degree $1$ away from a Zariski-closed subset of $\wsc$ of lower dimension. So in general, the bounds and geometric properties we get for $\zsc^a$ will also apply to $\dsc$. For most of this paper, we will focus on the properties of $\zsc$ and/or $\zsc^a$ for any fixed $a$. 

For additional details on properties of $\zsc^a$ and $\dsc$ and their proofs, see~\cite{chenevier04} or~\cite{buzzard07}.




\section{Locally algebraic weights}
\label{localgweights}

In this section, we analyze classical automorphic forms of locally algebraic weights and their associated automorphic representations. In Section~\ref{localgrepdef}, we define these spaces of classical forms and check their basic properties, including that they embed into the infinite-dimensional spaces of Section~\ref{padicautforms}. In Section~\ref{classicality}, we reproduce Bella\"iche-Chenevier's slope criterion guaranteeing that a given form is classical, phrased to work for locally algebraic weights instead of just algebraic weights; while this is not directly needed for our purposes, it is useful to give a sense of where classical forms fit in among the world of all $p$-adic automorphic forms. In Section~\ref{assocautrep}, we explain the standard translation between classical forms and automorphic representations. In Section~\ref{smstruct}, we analyze certain Iwahori subrepresentations that may appear in the local component at $p$ of such an automorphic representation, including a particularly important irreducible subrepresentation. In Section~\ref{rochecalc}, we apply the work of Roche to a calculation of Hecke eigenvalues in ramified principal series. In Section~\ref{repstructure}, we identify a subspace of forms whose associated automorphic representations have ramified principal series as their local components at $p$, and compute their $U_p$-eigenvalues in terms of the parameters of the corresponding principal series.

\subsection{$p$-adic automorphic forms of locally algebraic weights}
\label{localgrepdef}

In Section~\ref{padicautforms}, we defined classical forms of algebraic weights via the algebraic representation $S_t(k)$ of $GL_n(\rats_p)$. This construction may be generalized to locally algebraic weights as follows. Let $\chi=\chi_1\dotsb\chi_n$ be a finite character of $(\ints_p^\times)^n$. Then $t\chi$ is a locally algebraic character of $(\ints_p^\times)^n$, in the sense that it is algebraic upon restriction to $\prod_{i=1}^n(a_i+p^{c_i}\ints_p)$ for some choice of $c_i$s and any nonzero $a_i$s. Similarly to earlier notation, for a positive integer $c$, let
\[
B(\un{a},c)=\{z=(z_{ij})_{n\ge i>j\ge1}\in\ints_p^{n(n-1)/2}\mid z_{ij}\in a_{ij}+p^{c}\ints_p\forall i,j\}
\] 
Then there are two equivalent definitions of the space
\[
S_{t\chi,c}:=\ind_{B(\ints_p)}^{\iw_p,c-loc. alg.}(t\chi)
\]
where $c-loc.alg.$ stands for \emph{$c$-locally algebraic}. The first is through the usual induction operator above, as follows. We say that $f\in\ind_{B(\ints_p)}^{\iw_p}(t\chi)$ is \emph{$c$-locally algebraic} if it has an algebraic extension to $B(\un{a},c)$ for all $\un{a}\in\ints_p^{n(n-1)/2}$ of degree bounded as follows: writing $f$ as a polynomial in the variables $Z_{i,k/1}$ as in Section~\ref{plucker}, we require that for each fixed $i$, the degree of $f$ as a polynomial in all the variables $Z_{i,k/1}$ should be at most $t_i-t_{i+1}=:m_i$. As in Proposition~\ref{locanpres}, one can see using the formulas in Section~\ref{plucker} that assuming $\cond(\chi_i)\le c$ for all $i$, this condition is invariant under right translation by $\iw_p$. 


The second definition, coming from the perspective of Loeffler (Section 2.5 of~\cite{loeffler10}), is
\[
\left(\ind_{B(\ints_p)}^{\iw_p,alg}t\right)\ten\left(\ind_{B(\ints_p)/B(\ints_p)\cap \Gam(c)}^{\iw_p/\Gam(c)}\chi\right).
\]
Note that $\Gam(c)$ is normal in $\iw_p$ because it is the kernel of the reduction map from $\iw_p$ to the corresponding group with coefficients in $\ints_p/p^c\ints_p$.

Except for an annoying technical distinction which we will discuss at the end of this subsection, the space $\ind_{B(\ints_p)}^{\iw_p,alg}t$ is the same (as an $\iw_p$-representation) as the space $S_t(k)$ defined in Section~\ref{padicautforms}, since $\iw_p$ is Zariski-dense in $GL_n$. Let $d_t=\dim\ind_{B(\ints_p)}^{\iw_p,alg}t$. We will now check that the two definitions just given are actually equivalent. 

\begin{prop}
The natural map 
\[
\left(\ind_{B(\ints_p)}^{\iw_p,alg.}t\right)\ten\left(\ind_{B(\ints_p)/B(\ints_p)\cap \Gam(c)}^{\iw_p/\Gam(c)}\chi\right)\to \ind_B^{\iw_p,c-loc. alg.}(t\chi)
\]
\[
f\ten g\mapsto fg
\]
is an isomorphism.
\end{prop}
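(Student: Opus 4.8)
The plan is to show the map $f\ten g\mapsto fg$ is injective and surjective by matching bases on each ball $B(\un{a},c)$.

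First I would analyze the target space $\ind_B^{\iw_p,c\text{-}loc.alg.}(t\chi)$ restricted to a single ball $B(\un{a},c)$, for each $\un{a}\in\ints_p^{n(n-1)/2}$. By the Iwahori decomposition and the Pl\"ucker coordinate formulas of Section~\ref{plucker}, a function $f\in\ind_B^{\iw_p}(t\chi)$ is determined by its restriction to $\bar N$, i.e.\ by a function of $\un{z}=(z_{ij})$; the induction condition fixes the values on cosets, so $f$ is determined by its restriction to the $p^{\frac{n(n-1)}2}$ distinct reduction classes of $\un{z}$ mod $p^c$. The $c$-locally algebraic condition says the restriction to each ball $B(\un{a},c)$ is a polynomial in the $Z_{i,k/1}$ with the degree bound $\deg_{\{Z_{i,k/1}\}_k}\le m_i=t_i-t_{i+1}$ for each $i$. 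So a $c$-locally algebraic $f$ is a tuple (indexed by $\un{a}\in(\ints_p/p^c)^{n(n-1)/2}$) of polynomials satisfying those degree bounds, and the dimension of the target is $p^{\frac{n(n-1)}2 c}$ (or rather, summing over $\bar N$-ball classes) times $d_t$, which I would compute precisely by counting monomials in the $Z_{i,k/1}$ of the bounded degree --- this count is exactly $d_t=\dim S_t(k)$ by the classical description (cf.\ Section 12.1.3 of \cite{gw09}).

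Next I would identify the source. The first factor $\ind_{B(\ints_p)}^{\iw_p,alg}t\cong S_t(k)$ has dimension $d_t$ and consists of honest global polynomials in the $Z_{i,k/1}$ with the degree bounds $m_i$. The second factor $\ind_{B(\ints_p)/B(\ints_p)\cap\Gam(c)}^{\iw_p/\Gam(c)}\chi$ is a representation of the finite group $\iw_p/\Gam(c)$; since $\Gam(c)$ is normal in $\iw_p$ (being the kernel of reduction mod $p^c$), this factor is genuinely induced from the finite quotient, and its dimension is $[\iw_p/\Gam(c):B(\ints_p)\Gam(c)/\Gam(c)]$, which equals the number of $\bar N$-ball-classes mod $p^c$, i.e.\ $p^{(\text{something})}$ matching the combinatorial factor above; the function $g$ factors through the reduction $\bar N(\un{z})\mapsto\bar N(\un{z})\bmod p^c$. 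So the product $fg$ on each ball $B(\un{a},c)$ is (global polynomial of bounded degree) times (the locally constant value of $g$ on that ball) --- manifestly $c$-locally algebraic, so the map is well-defined into the target, and it is $\iw_p$-equivariant by the left-translation formulas.

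Finally, for the isomorphism itself: the dimensions agree (both equal $d_t$ times the number of mod-$p^c$ ball classes), so it suffices to prove injectivity (or surjectivity). For injectivity, I would argue that if $fg=0$ with $f\neq0$, then since $f$ is a nonzero global polynomial it is nonzero on a Zariski-dense set, hence nonzero on some ball $B(\un{a},c)$; but then $g$ must vanish on that ball's reduction class, and by $\iw_p$-equivariance (translating $g$ around, since $\iw_p/\Gam(c)$ acts transitively on the relevant ball classes) $g$ vanishes everywhere, so $g=0$. Equivalently, one can write out that $fg$ restricted to $B(\un{a},c)$ equals $g(\un{a})$ times the polynomial $f$, and since the polynomials from $S_t(k)$ restricted to any fixed ball are linearly independent (they're honest polynomials and the ball is Zariski-dense), the only way $\sum f_\ell g_\ell$ can vanish on all balls is for all $g_\ell$ to vanish identically. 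The main obstacle --- really the only nontrivial point --- is verifying carefully that the degree-bound condition defining the target matches exactly the monomial count for $S_t(k)$ and that the two factors' dimensions multiply to the target dimension, so that injectivity suffices; this is the combinatorial bookkeeping with the Pl\"ucker coordinates $Z_{i,k/1}$ and the partition of $\iw_p$ into cosets that I'd need to lay out explicitly. Everything else (well-definedness, equivariance) is a direct consequence of the transformation formulas already established in Section~\ref{plucker}.
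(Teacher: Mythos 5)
Your proof is correct, but it routes through the two halves of ``bijectivity'' in the opposite order from the paper. The paper establishes surjectivity first, and does so constructively: given $\phi$ in the target, it defines $\phi_{alg}$ to be the unique algebraic function on $\iw_p$ agreeing with $\phi$ on $\bar N\cap\Gam(c)$, and $\phi_{sm}=\phi/\phi_{alg}$ extended by the $\chi$-transformation rule, then checks $\phi_{alg}\ten\phi_{sm}\mapsto\phi$; injectivity is then disposed of by the dimension count. You instead prove injectivity directly --- expand an arbitrary element as $\sum_\ell f_\ell\ten g_\ell$ with $\{f_\ell\}$ a basis of $S_t$, restrict to a single ball where each $g_\ell$ is a constant, and use Zariski-density of the ball to conclude that the $f_\ell$ stay linearly independent there --- and then invoke the dimension count for surjectivity. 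Both are valid, and they are genuinely dual strategies. The paper's version has the small advantage that it exhibits the inverse map $\phi\mapsto\phi_{alg}\ten\phi_{sm}$ explicitly, which makes the locally-algebraic-equals-algebraic-times-smooth factorization of an individual form available for later use (it is exactly the $\rho_{alg}$, $\rho_{sm}$ splitting in Section~\ref{assocautrep}); your version avoids having to define and verify the inverse, at the cost of making the decomposition only implicit. One caution: your first phrasing of injectivity for a simple tensor --- ``by $\iw_p$-equivariance, translating $g$ around, $g$ vanishes everywhere'' --- does not work as stated, because equivariance of the map says nothing about where a single $g$ vanishes; what saves that version is that the nonvanishing locus of a nonzero polynomial $f$ is $p$-adically dense, so $f$ is nonzero \emph{on every ball}, forcing the locally constant $g$ to vanish on each. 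Your subsequent ``equivalently'' argument via a basis of $S_t$ is the clean and fully correct form, so the gap is cosmetic.
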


\begin{proof}
To construct an inverse, let $\phi\in \ind_{B(\ints_p)}^{\iw_p,c-loc.alg.}(t\chi)$. Let $\phi_{alg}:\iw_p\to\cplx$ be defined by
\[
\phi_{alg}(b\bar{n})=t(b)\phi'(\bar{n})
\]
for all $b\in B,\bar{n}\in\bar{N}\cap \iw_p$, where $\phi'$ is the unique algebraic extension of $\phi|_{\bar{N}\cap \Gam(c)}$ to $\bar{N}\cap \iw_p$. Let $\phi_{sm}:\iw_p/\iw_p\cap \Gam(c)\to\cplx$ be defined by
\[
\phi_{sm}(\bar{b}\bar{\bar{n}})=\chi(b)(\phi/\phi')(\bar{n})
\]
where $b,\bar{n}$ are any lifts of $\bar{b}\in B/B\cap \Gam(c)$, $\bar{\bar{n}}\in(\bar{N}\cap \iw_p)/(\bar{N}\cap \Gam(c))$. Then we have $\phi_{alg}\ten\phi_{sm}\mapsto\phi$, which suffices to prove surjectivity. 

Injectivity follows from dimension counting: both sides have dimension $d_tp^{c\binom n2}$.
\end{proof}

\begin{rem}
There is a simple isomorphism of $\iw_p$-representations
\[
\ind_{B(\ints_p)/B(\ints_p)\cap \Gam(c)}^{\iw_p/\Gam(c)}\chi\isom\ind_{\Gam_0(c)}^{\iw_p}\chi
\]
so we could just as easily have phrased this section in terms of $\ind_{\Gam_0(c)}^{\iw_p}\chi$. For now, we have no particular reason to do this, but it may be more convenient for future work.
\end{rem}

We call
\[
S_{t\chi,c}(G,\usc)=\ind_B^{\iw_p,c-loc. alg.}(t\chi)(G,\usc)
\]
the space of classical $p$-adic automorphic forms on $G$ of weight $t\chi$, radius $c$, and level $\usc$. By the definitions, it embeds into $\ssc_{t\chi}(G,\usc)$, and we call its image a classical subspace of $\ssc_{t\chi}(G,\usc)$. The following proposition is a quick generalization of part 4 of Lemma 4 of~\cite{buzzard04}.


\begin{prop}
\label{levels}
For any positive integers $c$, $d$, and $e$ with $d\le e$ and $c+d-e\ge1$, we have a natural vector space isomorphism
\[
S_{t\chi,c}(G,\usc^p\Gam_0(d))\cong S_{t\chi,c+d-e}(G,\usc^p\Gam_0(e))
\]
such that systems of $\hsc$-eigenvalues on the left (where $\hsc$ is obtained with respect to $\usc^p\Gam_0(d)$) go to identical systems of $\hsc$-eigenvalues on the right (where $\hsc$ is obtained with respect to $\usc^p\Gam_0(e)$).
\end{prop}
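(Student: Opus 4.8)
The plan is to mimic the argument for part 4 of Lemma 4 of \cite{buzzard04}, reducing to a statement about the representations $S_{t\chi,c}$ themselves and then transporting it to automorphic forms via the decomposition $V(G,\usc)\cong\bigoplus_i V^{x_i^{-1}G(\rats)x_i\cap\usc}$ (and, in the neat case, $V(G,\usc)\cong V^h$). First I would observe that $\usc^p\Gam_0(d)$ and $\usc^p\Gam_0(e)$ differ only in the wild level at $p$, and by Lemma~\ref{hcoords} the Hecke operators in $\hsc$ act in both cases through the same combinatorial data (permutations/projections on the $h$ coordinates together with diagonal translations by elements of double cosets $\usc\zt\usc$ with $\zt\in\Sg^-$); so once we have an $\iw_p$- (indeed $B(\rats_p)\iw_p$-) equivariant isomorphism of the relevant spaces of functions, it automatically intertwines all of $\hsc$ and the claim about systems of eigenvalues is immediate. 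Thus the real content is local at $p$.

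Next I would pin down what $S_{t\chi,c}(G,\usc^p\Gam_0(d))$ is as a space of functions. Using the isomorphism $\ind_{B(\ints_p)/B(\ints_p)\cap\Gam(c)}^{\iw_p/\Gam(c)}\chi\isom\ind_{\Gam_0(c)}^{\iw_p}\chi$ from the Remark, together with the tensor decomposition just proved, we get
\[
S_{t\chi,c}\cong\Big(\ind_{B(\ints_p)}^{\iw_p,alg}t\Big)\ten\Big(\ind_{\Gam_0(c)}^{\iw_p}\chi\Big),
\]
and taking $\Gam_0(d)$-invariants (the invariants being what the $(G,\usc^p\Gam_0(d))$ construction extracts, up to the finite groups $x_i^{-1}G(\rats)x_i\cap\usc$, which are trivial in the neat case) amounts to taking $\Gam_0(d)$-invariants on the second factor, since the first factor is an algebraic representation. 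Concretely, a $c$-locally algebraic function on $\iw_p$ that is right $\Gam_0(d)$-invariant is determined by its restriction to $\bar N\cap\iw_p$, where it is polynomial of the bounded degrees on each ball $B(\un a,c)$ and constant on cosets of $\bar N\cap\Gam_0(d)$; when $d\le c$ this constancy is automatic and when $d>c$ it is an extra constraint.

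The key step is then the purely $p$-adic identification: the natural "inflation/restriction" map that reinterprets a $(c,d)$-datum as a $(c+d-e,e)$-datum is a bijection precisely under the hypotheses $d\le e$ and $c+d-e\ge1$. The idea is that a right-$\Gam_0(d)$-invariant, $c$-locally algebraic function is, on each ball of radius $p^{-c}$ in the $z_{ij}$'s, a polynomial of degree $\le m_i$ in the $i$-th block of Plücker variables; passing to $\Gam_0(e)$ replaces the partition into radius-$p^{-c}$ balls by the finer partition into radius-$p^{-e}$ balls and enlarges the invariance group accordingly, and the condition $c+d-e\ge1$ (i.e.\ $c+d-e\ge c+d-e$ is a positive integer) is exactly what is needed for the new radius-of-analyticity parameter $c+d-e$ to be a legitimate positive integer and for no information to be lost or spuriously added — the polynomial on a big ball is reconstructed from its restrictions to the sub-balls because the degree bounds are preserved. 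I would make this precise by exhibiting the inverse map explicitly (glue the polynomials on the sub-balls, check the degree bound $m_i$ is unaffected, check $\iw_p$-equivariance using that $\Gam(c)$, $\Gam_0(d)$, etc.\ are normal in $\iw_p$ with the reduction-map description) and then count dimensions: both sides should have dimension $d_t\cdot p^{(c+d-e)\binom n2}\cdot[\iw_p:\Gam_0(e)]$-type expressions that visibly agree, forcing bijectivity. Finally I would pass from the function spaces back to $(G,\usc)$ by applying the same isomorphism coordinatewise on the $h$ double-coset components and noting, as at the end of Section~\ref{padicautforms} and in Remark 2.14 of \cite{lwx17}, that the finite groups $x_i^{-1}G(\rats)x_i\cap\usc$ act compatibly (trivially in the neat case), so the isomorphism descends.

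The main obstacle I anticipate is the bookkeeping in the local bijection: one must track exactly how the degree-$\le m_i$ condition on the Plücker variables behaves under simultaneously shrinking the ball radius ($c\to c+d-e$) and coarsening/refining the invariance lattice ($d\to e$), and verify that the hypothesis $c+d-e\ge1$ (as opposed to a naive $c-e+d\ge0$) is the sharp one. Everything else — the Hecke-equivariance and the globalization — is formal once Lemma~\ref{hcoords} and the tensor decomposition are in hand.
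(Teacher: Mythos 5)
Your outline hits the right beats---localize at $p$, rescale or shrink the radius of analyticity, count dimensions---but two of its load-bearing claims do not hold, and those are precisely where the paper's proof has its actual content.

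First, the claim that ``taking $\Gam_0(d)$-invariants \ldots\ amounts to taking $\Gam_0(d)$-invariants on the second factor, since the first factor is an algebraic representation'' is false. The group $\Gam_0(d)$ is a finite-index subgroup of $\iw_p$ and acts nontrivially on $\ind_{B(\ints_p)}^{\iw_p,alg}t$ by right translation, and the $(G,\usc)$-construction imposes invariance under the \emph{diagonal} $\usc$-action on $\hom_{set}(X,\cplx_p)\ten V$; one cannot extract invariants from a single tensor factor. The paper in fact avoids the tensor decomposition entirely in this proof and works directly with the $c$-locally algebraic description of $\ind_{B(\ints_p)}^{\iw_p,c-loc.alg.}(t\chi)$.

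Second, and more seriously, the assertion that the Hecke-equivariance is ``automatic once we have an $\iw_p$- (indeed $B(\rats_p)\iw_p$-) equivariant isomorphism of the relevant spaces of functions'' cannot be executed: the $\iw_p$-modules $S_{t\chi,c}$ and $S_{t\chi,c+d-e}$ have dimensions $d_tp^{c\binom n2}$ and $d_tp^{(c+d-e)\binom n2}$, which differ whenever $d\neq e$, so no such equivariant isomorphism exists. The isomorphism of the proposition genuinely trades the $\bar N$-coordinate direction against the double-coset direction: dimensions only balance globally because $h_e/h_d=[\Gam_0(d):\Gam_0(e)]=p^{(e-d)\binom n2}$. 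The map must therefore be constructed using coset representatives $A$ for $\Gam_0(e)\bsl\Gam_0(d)$ together with the equivariance $f(xu)=u_p^{-1}f(x)$ of forms, and the Hecke compatibility is a nontrivial check rather than a formality. The paper handles both points together by embedding the left-hand side as the subset of the common ambient space $(\hom_{set}(X,\cplx_p)\ten\ind_B^{\iw_p,c-loc.alg.}(t\chi))^{\Gam_0(e)}$ that is additionally invariant under $A$; the map to the right-hand side then restricts the function component to $(p^{e-d}\ints_p)^{n(n-1)/2}$ and rescales by $p^{d-e}$, with the inverse defined explicitly (and $A$-dependently) via $\psi(x)(z)=\phi(xa^{-1})(\bar N^{-1}(\bar N(z)a^{-1}))$. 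What you describe as ``glue the polynomials on the sub-balls'' is really this translation-by-cosets argument, and Hecke-equivariance is verified by noting that the Hecke action on the left side can be computed after passing to the ambient $\Gam_0(e)$-invariant space. You did correctly identify that $c+d-e\ge1$ is exactly the condition needed for the rescaled radius to be a legitimate positive integer.
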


\begin{proof}
For the purposes of this proposition, let $X=G(\rats)\bsl G(\aff_f)$. The left-hand side is the subset of
\begin{equation}
\label{proplevelshom}
(\hom_{set}(X,\cplx_p)\ten_{\cplx_p}\ind_B^{\iw_p,c-loc. alg.}(t\chi))^{\Gam_0(e)}
\end{equation}
that remains invariant under a set of coset representatives $A$ for $\Gam_0(e)\bsl\Gam_0(d)$. This subset has a map by restriction of the second factor to 
\[
(\hom(X,\cplx_p)\ten \osc)^{\Gam_0(e)}
\]
where $\osc$ is the space of functions on $B((p^{e-d}\ints_p)^{n(n-1)/2},c)$ that are algebraic on each ball $B(\un{a},c)$. The map is an isomorphism: if $\phi\in (\hom(X,\cplx_p)\ten \osc)^{\Gam_0(e)}$, its inverse $\psi$ may be defined by 
\[
\psi(x)(z)=\phi(xa^{-1})(\bar{N}^{-1}(\bar{N}(z)a^{-1}))\text{ for }a\in A\text{ such that }za^{-1}\in B\left((p^{e-d}\ints_p)^{n(n-1)/2},c\right).
\]
In $\bar{N}(z)a^{-1}$, $a$ should be interpreted as a coset representative for $\Gam_0(e-d+1)\bsl\iw_p$. Note that this inverse depends on the choice of coset representatives $A$. Now $B((p^{e-d}\ints_p)^{n(n-1)/2},c)$ is isomorphic to $B(\ints_p^{n(n-1)/2},c+d-e)$ via multiplication by $p^{d-e}$, so $(\hom(X,\cplx_p)\ten \osc)^{\Gam_0(e)}$ is the desired right-hand side.

To check that the Hecke operator action is preserved, it suffices to note that the Hecke operator action on the left-hand side can be calculated on its image in (\ref{proplevelshom}). 
\end{proof}


\begin{cor}
\label{emb1}
For all positive integers $c$ and group-like $\un{d}\in\ints_{\ge0}^{n(n-1)/2}$, we have a vector space embedding 
\[
S_{t\chi,c}(G,\usc^p\Gam_0(\un{d}))\inj \ssc_{t\chi}(G,U_0(p))
\]
preserving systems of $\hsc$-eigenvalues.
\end{cor}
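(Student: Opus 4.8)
The plan is to deduce the corollary from Proposition~\ref{levels} by sandwiching the group-shaped half-matrix $\un d$ between single-exponent level groups and then composing with the tautological inclusion of classical forms into continuous ones. Write $\usc^p$ for the fixed neat tame level, so that $U_0(p)=\usc^p\iw_p$, and set $D=\max_{n\ge i>j\ge1}d_{ij}$; we may assume $D\ge1$, since if $D=0$ then $\Gam_0(\un d)=\iw_p$ and there is nothing to do beyond the last step below. Because larger exponents cut out smaller subgroups, $\Gam_0(D)\subseteq\Gam_0(\un d)\subseteq\iw_p$, and since every element of $\iw_p$ is already upper triangular modulo $p$ we have $\Gam_0(1)=\iw_p$. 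The embedding is then the composite of three maps. First, a form valued in $S_{t\chi,c}$ invariant under $\usc^p\Gam_0(\un d)$ is a fortiori invariant under the smaller group $\usc^p\Gam_0(D)$, giving a vector space inclusion $S_{t\chi,c}(G,\usc^p\Gam_0(\un d))\inj S_{t\chi,c}(G,\usc^p\Gam_0(D))$. Second, Proposition~\ref{levels}, applied with its ``$d$'', ``$e$'', ``$c$'' taken to be $1$, $D$, $c+D-1$ (which is legitimate since $1\le D$ and $(c+D-1)+1-D=c\ge1$), gives an $\hsc$-eigensystem-preserving isomorphism $S_{t\chi,c}(G,\usc^p\Gam_0(D))\cong S_{t\chi,c+D-1}(G,\usc^p\Gam_0(1))=S_{t\chi,c+D-1}(G,U_0(p))$. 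Third, $S_{t\chi,c+D-1}$ is a sub-$\iw_p$-representation of $\ssc_{t\chi}$ that is moreover stable under $\Sg^-$ (a $c$-locally algebraic function is in particular continuous, and by Proposition~\ref{scale} translation by $u^a\in\Sg^-$ scales the $z_{ij}$ into a sub-ball and preserves polynomial degrees), so the resulting inclusion $S_{t\chi,c+D-1}(G,U_0(p))\hookrightarrow\ssc_{t\chi}(G,U_0(p))$ is $\hsc$-equivariant, as already recorded after the definition of the classical subspaces. Composing the three maps proves the corollary.

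The one step that is not purely formal is checking that the first inclusion respects the action of $\hsc$, since $\hsc$ is built relative to the ambient level and the operators $U_p^a=[\usc u^a\usc]$ depend a priori on the wild level $\usc_p$. For the tame Hecke operators there is nothing to verify: they act only on the $G(\aff_f^p)$-argument and commute with shrinking the $p$-adic invariance condition. For $U_p^a$ with $a\in\Sg^-$, one invokes the standard fact that for dominant $a$ the coset decomposition of $\usc_p u^a\usc_p$ — and hence, via Lemma~\ref{hcoords}, the matrix of $U_p^a$ — is compatible under shrinking the wild level $\usc_p$ within $\iw_p$, so that $U_p^a$ at level $\usc^p\Gam_0(\un d)$ is the restriction of $U_p^a$ at level $\usc^p\Gam_0(D)$; this is the same phenomenon that makes $P_s^a(X)$ independent of the wild level (compare part~4 of Lemma~4 of~\cite{buzzard04}, used in the proof of Proposition~\ref{levels}). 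I expect this bookkeeping, rather than anything in the chain of embeddings itself, to be the bulk of the (mild) work.

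I would also remark on an alternative that bypasses the sandwiching: one can generalize Proposition~\ref{levels} directly to a group-shaped half-matrix $\un d$, running its proof with coset representatives for $\Gam_0(\un d)\bsl\iw_p$ indexed by the lower entries $z_{ij}\bmod p^{d_{ij}}$ and the rescaling $z_{ij}\mapsto p^{d_{ij}}z_{ij}$. Because $S_{t\chi,c}$ carries only a single radius of local algebraicity, this rescaling lands one inside $S_{t\chi,c+D-1}(G,\usc^p\iw_p)$ as an $\hsc$-stable subspace rather than giving an isomorphism onto it, but that is exactly what the corollary asks for, so either route suffices.
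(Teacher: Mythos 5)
Your proposal is correct and follows essentially the same route as the paper's own proof: include $S_{t\chi,c}(G,\usc^p\Gam_0(\un d))$ into $S_{t\chi,c}(G,\usc^p\Gam_0(D))$ with $D=\max d_{ij}$, apply Proposition~\ref{levels} (with the same choice of parameters) to identify the latter with $S_{t\chi,c+D-1}(G,U_0(p))$, and then embed into $\ssc_{t\chi}(G,U_0(p))$. The only difference is that you spell out the Hecke-compatibility bookkeeping for the first inclusion and the $\Sg^-$-stability of the last step, which the paper leaves implicit.
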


\begin{proof}
Let $d=\max d_{ij}$. Then we have an embedding 
\[
S_{t\chi,c}(G,\usc^p\Gam_0(\un{d}))\inj S_{t\chi,c}(G,\usc^p\Gam_0(d)).
\]
By Proposition~\ref{levels}, we have an isomorphism
\[
S_{t\chi,c}(G,\usc^p\Gam_0(d))\cong S_{t\chi,c+d-1}(G,\usc^p\Gam_0(1))=S_{t\chi,c+d-1}(G,\usc^p\iw_p).
\]
The space on the right certainly embeds into $\ssc_{t\chi}(G,\usc^p\iw_p)=\ssc_{t\chi}(G,U_0(p))$ as discussed above.
\end{proof}

For future reference, it will be important to note the following distinction between the space $S_{t,1}(G,U_0(p))$ defined above and the space $S_t(k)(G,U_0(p))$ of classical algebraic automorphic forms defined in Section~\ref{padicautforms}, which is that they are identical except for the normalization of the action of the $U_p$-operator. This is because, as in the beginning of Section~\ref{plucker}, the action of $u^a$ on $S_t=\ind_{B(\ints_p)}^{\iw_p,alg}t=\ind_{B(\rats_p)}^{B(\rats_p)\iw_p,alg}t^0$ implicitly arises from the extension of $t$ to $t^0:(\rats_p^\times)^n\to\cplx$ where $t^0(u^a)=1$, whereas the action of $u^a$ on $S_t(k)$ arises from the algebraic character $t:(\rats_p^\times)^n\to\cplx$, for which we can compute $t(u^a)=p^{\sum_i a_it_i}$. Thus we have
\[
U_p^a|S_{t,1}(G,U_0(p))=p^{\sum_i a_it_i}U_p^a|S_t(k)(G,U_0(p)).
\]

\subsection{A classicality theorem following Bella\"iche-Chenevier}
\label{classicality}

This is essentially Proposition 7.3.5 of \cite{bc09}. We will just summarize the proof with modifications so that it also works for locally algebraic weights.

\begin{thm}
Let $f\in\ssc_{t\chi}(G,\usc)$ where $t\chi=(t_1\chi_1,\dotsc,$ $t_n\chi_n)$, in which the $t_i$ are integers such that $t_1\ge\dotsb\ge t_n$ and the $\chi_i$ are finite, such that $f$ is an eigenform for all operators $U_p^{(a_1,\dotsc,a_n)}$. Let $\lam_1,\dotsc,\lam_{n-1}$ be the $\lam$-values associated to $f$ as defined at the end of Section~\ref{upa}. If
\[
v(\lam_1\lam_2\dotsb\lam_i)<t_i-t_{i+1}+1
\]
for all $i=1,\dotsc,n-1$, then $f$ is classical (i.e. lies in the image of $S_{t\chi,c}(G,\usc)$ for any $c$ such that this is well-defined).
\end{thm}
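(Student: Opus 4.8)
The plan is to follow the ``analytic continuation'' strategy that underlies classicality theorems of this type (Coleman, Bella\"iche--Chenevier), adapting the bookkeeping so that it accommodates a locally algebraic weight $t\chi$ rather than a purely algebraic weight $t$. The key point is that for a locally algebraic weight, the ``classical'' subspace $S_{t\chi,c}(G,\usc)$ sits inside $\ssc_{t\chi}(G,\usc)$ as the functions that, after twisting away the finite character $\chi$, are polynomial of bounded degree in each block of Pl\"ucker coordinates $Z_{i,\bullet/1}$ (degree at most $m_i = t_i - t_{i+1}$ in the $i$th block). So the statement to prove is: if the $\lambda$-values satisfy $v(\lambda_1 \cdots \lambda_i) < m_i + 1$ for all $i$, then $f$ actually lies in this finite-dimensional subspace.

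First I would reduce to the algebraic case by splitting off the finite character. Using the Loeffler-style tensor decomposition $S_{t\chi,c} \cong (\ind_{B(\ints_p)}^{\iw_p,alg}t)\otimes(\ind_{B(\ints_p)/B(\ints_p)\cap\Gam(c)}^{\iw_p/\Gam(c)}\chi)$ proved above, and the analogous decomposition at the level of the full induced space $\ssc_{t\chi,\un c} \cong \ssc_{t,\un c}\otimes(\text{finite part})$ — valid because $\Gam(c)$ is normal in $\iw_p$ and everything in sight is $\Gam(c)$-isotypic for $\chi$ on the relevant factor — the operators $U_p^a$ for $a\in\Sg^-$ act on the tensor product through the first factor only, up to the constant $\chi^0(u^a)=1$ coming from our trivial extension convention. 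Hence the problem is: an eigenform in $\ssc_t(G,\usc)$ (now a genuinely algebraic weight) with $v(\lambda_1\cdots\lambda_i) < m_i+1$ for all $i$ is classical, which is exactly Proposition 7.3.5 of \cite{bc09}; the remaining work is to recall its proof.

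For that algebraic statement, I would use the standard decomposition $\ind_{B(\ints_p)}^{\iw_p,cts}t = S_t \oplus C_t$, where $S_t$ is the algebraic (classical) subspace and $C_t$ is a complementary $\bar B(\rats_p)\iw_p$-stable subspace — concretely, the closure of the span of the monomials in the $Z_{i,\bullet/1}$ that exceed the allowed degree in some block $i$. The crucial estimate is that the operators $U_p^a$ for $a$ in the ``very dominant'' monoid $\Sg^{--}$ contract $C_t$: on a monomial that has degree exceeding $m_i$ in the $i$th Pl\"ucker block, $U_p^{(a_1,\dots,a_n)}$ scales by at least $p^{(a_i-a_{i+1})(m_i+1)}$ beyond the algebraic normalization (this is Proposition~\ref{scale} combined with the explicit action on Pl\"ucker coordinates from Section~\ref{plucker}). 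Choosing $a$ with $a_i - a_{i+1} = 1$, one gets that on $C_t$ the operator $u_i$ acts with all slopes $\ge m_i+1$. Therefore, on the generalized $(\lambda_1,\dots,\lambda_{n-1})$-eigenspace with $v(\lambda_i \cdots)$ controlled as hypothesized, the projection of $f$ to $C_t$ must be $0$: if it were nonzero, iterating $U_p^a$ would force some $v(\lambda_1\cdots\lambda_i) \ge m_i+1$, contradicting the hypothesis. This is where I expect the real work to be, and it is the step most sensitive to getting the normalizations right — one must carefully track the factor $p^{\sum_i a_i t_i}$ relating the $\ssc$-normalization of $U_p^a$ to the classical-representation normalization (flagged explicitly at the end of Section~\ref{localgrepdef}), make sure the ``slope on $C_t$'' bound is stated for the correct (un-normalized) operator, and handle the fact that $f$ is only a \emph{generalized} eigenvector, so one argues with the full finite-dimensional generalized eigenspace and uses that $U_p^a$ preserves it while strictly increasing valuations on the $C_t$-part.

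Finally, once $f \in S_t(G,\usc) \cong \ssc_t(G,\usc)\cap(\text{algebraic part})$, tensoring back up with the $\chi$-part recovers $f \in S_{t\chi,c}(G,\usc)$ for any $c \ge \max_i \cond(\chi_i)$ (the condition making the space well-defined), which is the assertion. I would also remark, as in \cite{bc09}, that classicality is independent of the choice of such $c$ by Proposition~\ref{levels}, so the phrase ``for any $c$ such that this is well-defined'' is justified. A small secondary point to check is that the complement $C_t$ is genuinely $U_p^a$-stable for $a\in\Sg^-$ (not just that $U_p^a$ contracts it) — this again follows from the Pl\"ucker-coordinate formulas, since right translation by $\iw_p$ preserves the degree filtration by block and the diagonal scaling only decreases the relevant valuations.
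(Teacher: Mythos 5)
Your overall strategy — estimate the slopes of $U_p^a$ on the ``non-classical part'' and conclude that an eigenform with slope below the threshold must be classical — is the right one and matches the paper's, which is a quotient argument of Coleman/Bella\"iche--Chenevier type. But there are two genuine gaps in the way you set it up.

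\textbf{First gap: you cannot take a $U_p^a$-stable complement.} You propose a decomposition $\ind_{B(\ints_p)}^{\iw_p,cts}t = S_t\oplus C_t$ with $C_t$ the span of monomials exceeding the degree bound in some Pl\"ucker block, and you explicitly claim $C_t$ is $U_p^a$-stable, justifying this by ``right translation by $\iw_p$ preserves the degree filtration by block.'' It does not. The $\iw_p$-action on $\ind_{B(\ints_p)}^{\iw_p,cts}t$ composes with rational maps of the Pl\"ucker coordinates (the formulas of Section~\ref{plucker} produce power series, not linear changes of variable), so a monomial of degree $>m_i$ is carried to a full power series containing low-degree terms. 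What $is$ true — and what the paper uses — is that $S_t$ is $\iw_p$-stable (this is the content of the preceding sections), so the \emph{quotient} is a well-defined module. The paper works with quotients $Q$, $Q'$, $Q_i'$; there is no stable complement, and the argument ``the projection of $f$ to $C_t$ must be $0$'' has no meaning until you replace $C_t$ by $\ssc_t/S_t$. This is a fixable but real conceptual error.

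\textbf{Second gap: the slope estimate needs more than Proposition~\ref{scale}.} Your contraction estimate is stated for the diagonal element $u^a$ alone. The Hecke operator $\prod_{j\le i} u_j$ is a sum $\sum (\zeta_\ell)_p$ with $(\zeta_\ell)_p\in\iw_p u^a\iw_p$; the $\iw_p$-factors on either side of $u^a$ scramble the monomial basis, and one must show that the \emph{composite} $g u^a g'$ for $g,g'\in\iw_p$ still has slope $\ge m_i+1$ on the quotient. The paper handles this by factoring $S_t^\vee$ through the tensor product $\bigotimes_j \ssc_j(m_j)$ indexed by exterior powers $\w^j\rats_p^n$ and working with the partially-quotiented spaces $Q_i'$, where the estimate reduces to the rank-one case (Lemma 7.3.6 of~\cite{bc09}). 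Your ``Pl\"ucker block'' language is gesturing at this exterior-power structure, but without the $Q_i'$-factorization the norm estimate on the quotient is not justified.

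One smaller remark: the paper does \emph{not} reduce to the algebraic weight case first; it carries the finite-character tensor factor through the definitions of $Q$, $Q'$, $Q_i'$. Your proposed reduction is plausible, but the assertion that $U_p^a$ ``acts through the first factor only'' is not correct as stated — the Hecke operator is a sum of translates by elements of $B(\rats_p)\iw_p$, and the $\iw_p$-part acts nontrivially on the smooth factor. If you want to separate variables, you have to argue that the resulting slope comparison is unaffected, which takes some care with the normalization flagged at the end of Section~\ref{localgrepdef}.
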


\begin{proof}
Let $V=\rats_pv\dsm\rats_pR$ be a finite-dimensional vector space generated by a nonzero vector $v$ and a lattice $R$. In some basis whose first vector is $v$, we define the following matrix groups, where a lowercase letter refers to a single matrix entry, an uppercase letter refers to a larger submatrix of the appropriate size, a number refers to a submatrix consisting of copies of that number of the appropriate size, and the subscript after the matrix refers to its coefficient ring:
\[
H=GL_{\rats_p}(V),\hspace{0.1in} P=\mattwo{a}{B}{0}{D}_{\rats_p}\subset H,
\]
\[
\bar{N}=\mattwo{1}{0}{C}{I_R}_{\ints_p},\hspace{0.1in} J=\mattwo{a}{B}{pC}{D}_{\ints_p}\subset GL_{\ints_p}(V)
\]
so that
\[
J=(\bar{N}\cap J)\times(P\cap J)=\mattwo{1}{0}{pC}{I_R}_{\ints_p}\times\mattwo{a}{B}{0}{D}_{\ints_p}.
\]
We have an isomorphism $\al:\bar{N}\cap J\to R$ given by
\[
\al\mattwo{1}{0}{pC}{I_R}=C.
\]
We also define
\[
\Ufr^-=\mattwo{p^k}{0}{0}{p^{\le k}GL_{\ints_p}(R)}_{\ints_p},\Ufr^{--}=\mattwo{p^k}{0}{0}{p^{\le k-1}GL_{\ints_p}(R)}_{\ints_p}.
\]
We define $\Mfr=\inn{\Ufr^-,J}$. Let $\chi:P\to\rats_p^\times$ be the character of $P$ acting on $\rats_p v$. We have a $\cplx_p[H]$-equivariant isomorphism
\[
\sym^m(V\ten_{\rats_p}\cplx_p)^\vee\to \ind_P^{H,alg}(\chi^m)
\]
\[
\phi\mapsto(h\mapsto\phi(h(e))).
\]
We get a natural $\Mfr$-equivariant map
\[
\ind_P^{H,alg}(\chi^m)\to\ind_P^{JP,an}(\chi^m)
\]
by restriction. Let $\del:\Mfr\to\cplx_p^\times$ be the character such that $\del(J)=1$ and $\del(u)=p^a$ if
\[
u=\mattwo{p^a}{0}{0}{U}\in \Ufr^-.
\]
Let $e_1,\dotsc,e_n$ be the standard basis of $\rats_p^n$. Let $V_i=\w^i(\rats_p^n)$, $v_i=e_1\w e_1\w\dotsb \w e_i$, $m_i=t_i-t_{i+1}$ if $i<n$ and $m_n=t_n$, and $R_i$ be the $\ints_p$-span of the elements $e_{j_1}\w\dotsb e_{j_i}$ with $j_1<\dotsb<j_i$ and $(j_1,\dotsc,j_i)\neq(1,\dotsc,i)$. Then for $i=1,\dotsc,n$, we get
\[
H_i,P_i,\chi_i,\bar{N}_i,J_i,\al_i,\Ufr_i^-,\Ufr_i^{--},\Mfr_i,\del_i
\]
as defined above.

Write $S_i(\cplx_p)^\vee$ for the space $\ind_{P_i}^{H_i,alg}(\chi_i^{m_i})$ viewed as a representation of $G(\rats_p)$ via $\w^i:G(\rats_p)\to H_i$. Write $\ssc_i(m_i)$ for the space $\ind_{P_i}^{H_i,an}(\chi_i^{m_i})\ten\del_i^{m_i}$ viewed as a representation of $\Mfr$ via $\w^i$. We have surjections
\[
\bigotimes_{i=1}^mS_i(\cplx_p)^\vee\to S_t(\cplx_p)^\vee
\]
\[
\widehat{\bigotimes}_{i=1}^m \ssc_i(m_i)\to \ssc_t
\]
which are equivariant with respect to $G(\rats_p)$ and $\Mfr$ respectively, both given by the formula
\[
(f_1,\dotsc,f_m)\mapsto\left(g\mapsto\prod_{i=1}^m f_i(\w^i(g))\right).
\]
Let
\[
Q=\ssc_t/(S_t(\cplx_p)^\vee\ten\del_t)\ten\left(\ind_{B(\ints_p)/B(\ints_p)\cap \Gam(c)}^{\iw_p/\iw_p\cap \Gam(c)}\chi\right),
\]
\[
Q'=\widehat{\bigotimes}_{i=1}^m \ssc_i(m_i) / \left(\bigotimes_{i=1}^mS_i(\cplx_p)^\vee\ten\del_i^{m_i}\right)\ten\left(\ind_{B(\ints_p)/B(\ints_p)\cap \Gam(c)}^{\iw_p/\iw_p\cap \Gam(c)}\chi\right),
\]
\[
Q_i'=\left(\widehat{\bigotimes}_{j\neq i}\ssc_j(m_j)\right)\ten\left(\ssc_i(m_i)/S_i(\cplx_p)^\vee\ten\del_i^{m_i}\right)\ten\left(\ind_{B(\ints_p)/B(\ints_p)\cap \Gam(c)}^{\iw_p/\iw_p\cap \Gam(c)}\chi\right).
\]
Then we have a surjection 
\[
Q'(G,\usc)\surj Q(G,\usc)
\]
and an injection 
\[
Q'(G,\usc)\inj\prod_{i=1}^n Q_i'(G,\usc). 
\]
We wish to show that if $w\in Q(G,\usc)$ satisfies the hypotheses of the theorem, that is, $u_i(w)=\lam_i w$ with the $\lam_i$ satisfying the given inequalities, then $w=0$. We can instead check this claim for $w'\in Q'(G,\usc)$ satisfying the same condition, and for this it suffices to check that the image $w_i'$ of $w'$ vanishes in $Q_i'(G,\usc)$ for each $i$. Let $U_i=\frac{\prod_{j=1}^iu_i}{p^{m_i+1}}$, so that $w_i'$ has $U_i$-eigenvalue $\frac{\lam_1\lam_2\dotsb\lam_i}{p^{m_i+1}}w$, which has norm $>1$. Thus it suffices to check that $U_i$ has norm $\le1$ on $Q_i'(G,\usc)$, which follows from the claim that any element of the form
\[
\frac{g\left(\prod_{j=1}^iu_i\right)g'}{p^{m_i+1}}
\]
for $g,g'\in\iw_p$ has norm $\le 1$ on $Q_i'$. This follows from Lemma 7.3.6 of \cite{bc09}.
\end{proof}

\subsection{Automorphic representations associated to automorphic forms of locally algebraic weights}
\label{assocautrep}

Fix an isomorphism $\it_p:\bar{\rats}_p\isom\cplx$. Let $f\in\ssc_{t\chi}(G,U_0(p))$ be a $p$-adic automorphic form coming from some classical subspace $S_{t\chi,c}(G,U_0(p))$. Let $W=\ind_{B(\ints_p)}^{\iw_p,c-loc. alg.}(t\chi)$, so that $f$ is a function $G(\rats)\bsl G(\aff_f)\to W$. Following the proof of Proposition 3.8.1 of \cite{loeffler10}, let $W=W^{sm,c}(\chi)\ten S_t(\cplx)$, where, as in Section~\ref{localgrepdef},
\[
W^{sm,c}(\chi)=\ind_{B(\ints_p)/B(\ints_p)\cap \Gam(c)}^{\iw_p/\iw_p\cap \Gam(c)}\chi,
\]
\[
S_t(\cplx)=\ind_{B(\ints_p)}^{\iw_p,alg}t,
\]
and let $\rho_{sm},\rho_{alg}$ denote the actions of $\iw_p$ on $W^{sm,c}(\chi)\ten S_t(\cplx)$ given by acting on only the first factor and only the second factor respectively. Then we can define a function $f_\infty:G(\aff)\to W$ by $f_\infty(g)=\rho_{alg}(g_\infty^{-1}\it_p(g_p))f(g_f)$ which satisfies the relation
\[
f_\infty(gu)=\rho_{sm}(u_p)^{-1}\rho_{alg}(u_\infty)^{-1}f_\infty(g)
\]
for all $u\in G(\real)U_0(p)$. Equivalently, $f_\infty$ can be viewed as the function
\[
f_\infty^\vee:(W^{sm,c}(\chi)^\vee\ten S_t(\cplx)^\vee)\times G(\rats)\bsl G(\aff)\to \cplx
\]
\[
(\phi,x)\mapsto \phi(f_\infty(x))
\]
which satisfies
\[
f_\infty^\vee(\phi,xu)=\phi(f_\infty(xu))=\phi(\rho_{sm}(u_p)^{-1}f_\infty(x))=f_\infty^\vee(u_p\phi,x)
\]
for all $u\in U_0(p)$. Thus for each $\phi\in W^{sm,c}(\chi)^\vee\ten S_t(\cplx)^\vee$, the function $f_\infty^\vee(\phi,\cdot)$ is an element of $C(G(\rats)\bsl G(\aff),\cplx)$ which generates under right translation by $\iw_p$ a representation containing an irreducible component of $W^{sm,c}(\chi)^\vee$ . The right translates of $f_\infty^\vee(\phi,\cdot)$ under $G(\aff)$ generate an automorphic representation $\pi_f$ of $G(\aff)$ which decomposes as a tensor product $\bigotimes_p'\pi_{f,p}$. We are interested in describing the structure of $\pi_{f,p}$.

Note that this process is reversible, in that given $\psi\in C(G(\rats)\bsl G(\aff),\cplx)$ which generates a representation containing an irreducible component of $W^{sm,c}(\chi)^\vee$ under right translation by $\iw_p$, we get a unique $f_\psi\in S_{t\chi,c}(G,U_0(p))$.


\subsection{Structure of $W^{sm,c}(\chi)$}
\label{smstruct}

We are interested in the representation
\[
W^{sm,c}(\chi)=\ind_{B(\ints_p)/B(\ints_p)\cap \Gam(c)}^{\iw_p/\Gam(c)}\chi
\]
of $\iw_p$. Note that there is an obvious embedding $W^{sm,c}(\chi)\inj W^{sm,c+1}(\chi)$ which takes $f\in W^{sm,c}(\chi)$ to the composition of $f$ with the reduction map $\iw_p/\Gam(c+1)\to\iw_p/\Gam(c)$. 

Let $J$ be the compact open subgroup of $GL_n(\rats_p)$ corresponding to $\chi$ defined in Section 3 of \cite{roche98}; we have $J=\Gam(\un{c})$ where 
\[
c_{ij}=\begin{cases}
0 & \text{ if }i=j \\
\left\lfloor\frac{\cond(\chi_i\chi_j^{-1})}2\right\rfloor & \text{ if } i<j\\
\left\lfloor\frac{\cond(\chi_i\chi_j^{-1})+1}2\right\rfloor  & \text{ if } i>j.
\end{cases}
\]
Then $\chi$ extends to a character of $J$ which we will also call $\chi$; it is defined by the equation $\chi(j^-jj^+)=\chi(j)$ when $j^-\in J\cap \bar{N}(\ints_p)$, $j\in T(\ints_p)$, and $j^+\in J\cap N(\ints_p)$. Let $U^{sm}(\chi):=\ind_J^{\iw_p}\chi$.

Now note that $W^{sm,c}(\chi)$ contains the vector
\[
f(\bar{x})=\begin{cases}
\chi(j)\chi(b) & \text{ if }\bar{x}=\bar{j}\bar{b}\in\iw_p/\Gam(c)\text{ with }j\in J\text{ and }b\in B(\ints_p),\\
0 & \text{otherwise.}
\end{cases}
\]
Note furthermore that for any $j\in J$ and $\bar{x}\in \iw_p/\Gam(c)$, we have
\[
(jf)(\bar{x})=f(j^{-1}\bar{x})=\chi(j^{-1})f(\bar{x})=\chi^{-1}(j)f(\bar{x})
\]
so that $f$ is $(J,\chi^{-1})$-isotypic.

\begin{prop}
\label{irred}
Assume $\chi=(\chi_1,\dotsc,\chi_n)$ satisfies

\begin{enumerate}
\item \label{jbig} for all $i\neq j$, $\cond(\chi_i\chi_j^{-1})=\max(\cond(\chi_i),\cond(\chi_j))$; and

\item \label{charclose} for all $i\neq j$ with $i,j\neq n$, $\cond(\chi_i)<2\cond(\chi_j)$.
\end{enumerate}

Then $U^{sm}(\chi)$ is irreducible.
\end{prop}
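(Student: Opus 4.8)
The plan is to prove irreducibility of $U^{sm}(\chi)=\ind_J^{\iw_p}\chi$ by Mackey theory. Since $J$ and $\iw_p$ are both compact, $U^{sm}(\chi)$ decomposes by the Mackey formula according to the double cosets $J\bsl\iw_p/J$, and $\ind_J^{\iw_p}\chi$ is irreducible precisely when, for every $g\in\iw_p\setminus J$, the characters $\chi$ and $\chi^g$ (the latter being $x\mapsto\chi(gxg^{-1})$, defined on $J\cap g^{-1}Jg$) disagree on $J\cap g^{-1}Jg$, together with the fact that $\chi$ itself is an (irreducible) character of $J$. So the first step is to reduce to this intertwining statement: $U^{sm}(\chi)$ is irreducible if and only if $\dim\Hom_{J\cap g^{-1}Jg}(\chi,\chi^g)=0$ for all $g\in\iw_p\setminus J$, equivalently $g$ does not intertwine $\chi$ with itself for any $g\in\iw_p$ outside $J$.

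Next I would use the Iwahori/Bruhat-type decomposition of $\iw_p$. Writing $g\in\iw_p$ in terms of its image in the finite group $\iw_p/\Gam(c)$ for $c$ large (since $\chi$ factors through $\iw_p/\Gam(c)$ and $J\supseteq\Gam(c)$ for suitable $c$), it suffices to run over representatives $g$ of $J\bsl\iw_p/J$; by the Iwahori decomposition $g=\bar n\, t\, n$ one can further normalize, and because $J=\Gam(\underline c)$ has a matching triangular decomposition $J=(J\cap\bar N)(J\cap T)(J\cap N)$ with $\chi$ trivial on the unipotent parts, the intertwining condition becomes a purely combinatorial condition on the entries of $g$ and the valuations $c_{ij}=\lfloor\cond(\chi_i\chi_j^{-1})/2\rfloor$ (or $\lceil\cdot\rceil$). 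Concretely, a nontrivial intertwiner forces, for the relevant off-diagonal coordinate of $g$, that conjugation by $g$ lands inside $\ker(\chi_i\chi_j^{-1})$, which pushes that coordinate back into $J$; Roche's analysis (Section 3 of \cite{roche98}), specifically the computation of the intertwining of the type $(J,\chi)$ in $GL_n(\rats_p)$ versus in $\iw_p$, is exactly the input here. The hypotheses are used precisely at this point: condition~\ref{jbig} that $\cond(\chi_i\chi_j^{-1})=\max(\cond(\chi_i),\cond(\chi_j))$ makes the $c_{ij}$ take the clean form $\lfloor\max(c_i,c_j)/2\rfloor$, and condition~\ref{charclose} that $\cond(\chi_i)<2\cond(\chi_j)$ for $i,j\ne n$ ensures that the ``crossing'' coordinates for distinct non-$n$ indices are controlled so that no Weyl-element twist (other than the identity in $J\bsl\iw_p/J$) can intertwine $\chi$ with itself.

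The cleanest way to organize the combinatorics is to reduce to pairs: intertwining of $\chi$ by $g$ in $GL_n$ is detected blockwise, and within $\iw_p$ the relevant obstructions come from the $2\times 2$ blocks indexed by pairs $i<j$, where the situation is exactly Casselman's/Roche's $GL_2$ computation for the character $\chi_i\chi_j^{-1}$ of conductor $\cond(\chi_i\chi_j^{-1})$. For such a pair one checks directly that the only element of the corresponding Iwahori block that intertwines the restricted type is already in the corresponding block of $J$; assembling these across all pairs $i<j$ and using that $J$ is ``block-diagonal-shaped'' in the sense that membership in $J$ is detected entrywise shows $g\in J$. I would also need to note separately that $\chi$ is an irreducible (one-dimensional) character of $J$, which is immediate. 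The main obstacle I anticipate is the bookkeeping for the pairs involving the index $n$: condition~\ref{charclose} is only imposed for $i,j\ne n$, so for pairs $(i,n)$ the conductors can be wildly different, and one has to check by hand that even then no off-$J$ element of $\iw_p$ intertwines $\chi$—this should follow from the asymmetry in the definition of $c_{ij}$ (floor for $i<j$, ceiling-type for $i>j$) combined with the upper-triangular-mod-$p$ constraint defining $\iw_p$, but verifying it carefully is where the real work lies.
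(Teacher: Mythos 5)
Your opening move is the same as the paper's: by Mackey's criterion it suffices to show that for every $s\in\iw_p\setminus J$ the characters $\chi$ and $\chi^s$ disagree somewhere on $J\cap s^{-1}Js$. The problem is that everything after that point in your proposal is asserted rather than proved, and the central assertion --- that the intertwining obstruction is ``detected blockwise'' by $2\times2$ blocks indexed by pairs $i<j$ --- is not justified and does not reflect how the verification actually has to go. To test whether $s$ intertwines $\chi$, one essentially has to produce an explicit $x\in J\cap s^{-1}Js$ with $\chi(x)\neq\chi(sxs^{-1})$; the natural candidate is an elementary matrix $x$ supported in a single position $(i,j)$, but then (writing $t=s^{-1}$) the conjugate $sxt$ differs from the identity in \emph{every} entry, by $b\,s_{ki}t_{jl}$. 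Whether $sxt$ lies in $J$ at all, and whether the characters $\chi_k$ for $k\neq i,j$ see the diagonal contributions $1+bs_{ki}t_{jk}$, depends on all entries of $s$, not on the $\{i,j\}\times\{i,j\}$ block. The paper's proof consists precisely of controlling this interaction: it chooses the pair $(i,j)$ by an extremal condition (maximizing $c_{ij}+c_{ji}-v(t_{ji})$, with a tie-break on $i$), sets $v(b)=c_{ij}+c_{ji}-v(t_{ji})-1$, uses hypothesis~(2) to check entrywise that $sxt\in J$, and uses hypothesis~(1) --- whose actual role is to guarantee that $\chi$ on $J$ is computed from the diagonal, $\chi(j)=\chi_1(j_{11})\dotsb\chi_n(j_{nn})$, not merely to clean up the exponents $c_{ij}$ --- to reduce $\chi(sxt)$ to $\frac{\chi_i}{\chi_j}(1+bs_{ii}t_{ji})$, which a final choice of $b$ makes $\neq1$. ``Assembling across pairs'' is therefore not bookkeeping; it is the entire content, and a per-pair $GL_2$ computation does not supply it (nor is it clear that double coset representatives for $J\backslash\iw_p/J$ can be normalized into a single $2\times2$ block).

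Your fallback of invoking Roche's intertwining computation could in principle give a different proof: if one knows that the intertwining set of $(J,\chi)$ in $GL_n(\rats_p)$ is contained in $J\,N\,J$ with $N$ built from $T(\rats_p)$ and the Weyl stabilizer of $\chi$, then for pairwise-distinct $\chi_i$ one gets $I(\chi)\cap\iw_p=J$, since $j_1u^aj_2$ with $a\neq0$ never lies in $GL_n(\ints_p)$. But you have not carried this out: Section 3 of Roche, which you cite, only constructs $(J,\chi)$ and does not compute intertwining; Roche's intertwining results carry hypotheses on the residue characteristic that the proposition does not assume (the paper's argument is deliberately a self-contained matrix computation); and the proposition's hypotheses permit repeated $\chi_i$ of conductor $1$, so the Weyl stabilizer need not be trivial and those extra double cosets would have to be handled separately. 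As written, the proposal identifies the right criterion but leaves the only nontrivial step of the proposition unproved.
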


\begin{proof}
By Mackey's criterion, it is necessary and sufficient to show that for any $s\in\iw_p\setminus J$, the characters $\chi$ and $\chi^s:j\mapsto \chi(sjs^{-1})$ are not identically equal on $J\cap s^{-1}Js$. If $s\in\iw_p\setminus J$, let $t=s^{-1}$. Then there is a pair $i\neq j$ such that $t_{ji}$ is not divisible by $p^{c_{ji}}$. Among all such $i\neq j$, choose a pair such that either

---among the integers $c_{kj}+c_{jk}-v(t_{jk})$, $1\le k\le n$, $k\neq j$, $c_{ij}+c_{ji}-v(t_{ji})$ is the unique maximal one;

---or, if this is not possible, among the integers $c_{kj}+c_{jk}-v(t_{jk})$, $1\le k\le n$, $k\neq j$, $c_{ij}+c_{ji}-v(t_{ji})$ is maximal and $i$ is minimal such that this is the case.

Let $x\in J$ be the identity except for the $ij$th entry; let $x_{ij}=b$. Note that we must have $p^{c_{ij}}|b$. We will show that we can choose $b$ such that $sxt\in J$ and $1=\chi(x)\neq\chi(sxt)$, and hence $\chi(x)\neq\chi^s(x)$, as desired. 

The matrix $xt$ is the same as $t$ except for the $i$th row, which is
\[
(t_{i1}+bt_{j1},\dotsc,t_{in}+bt_{jn}).
\]
The $kk$th entry of $sxt$ is 
\[
s_{k1}t_{1k}+\dotsb+s_{ki}(t_{ik}+bt_{jk})+\dotsb+s_{kn}t_{nk}=s_{k1}t_{1k}+\dotsb+s_{kn}t_{nk}+bs_{ki}t_{jk}=1+bs_{ki}t_{jk}.
\]
Because of condition~\ref{jbig}, one can check that for all $j\in J$, we have $\chi(j)=\chi_1(j_{11})\cdot\dotsb\cdot\chi_n(j_{nn})$. 
So we wish to choose $b$ such that
\[
\chi_1(1+bs_{1i}t_{j1})\dotsb\chi_i(1+bs_{ii}t_{ji})\dotsb\chi_j(1+bs_{ji}t_{jj})\dotsb\chi_n(1+bs_{ni}t_{jn})\neq1.
\]
Note that for all $k\neq i,j$, we have 
\[
v(s_{ki})+c_{ij}+c_{ji}-v(t_{ji})>c_{jk}+c_{kj}-v(t_{jk}).
\] 
This is just because we chose $i,j$ such that $c_{ij}+c_{ji}-v(t_{ji})\ge c_{jk}+c_{kj}-v(t_{jk})$, and such that if equality holds then $k>i$, in which case $v(s_{ki})\ge1$ since $s\in\iw_p$. So if we choose $b$ such that $v(b)=c_{ij}+c_{ji}-v(t_{ji})-1\ge c_{ij}$, then we have 
\[
v(bs_{ki}t_{jk})\ge c_{jk}+c_{kj}=\cond(\chi_k\chi_j^{-1})
\] 
for all $k\neq i,j$, hence
%
$
\chi_k(1+bs_{ki}t_{jk})=1.
$
Then we have
\[
\chi_1(1+bs_{1i}t_{j1})\dotsb\chi_i(1+bs_{ii}t_{ji})\dotsb\chi_j(1+bs_{ji}t_{jj})\dotsb\chi_n(1+bs_{ni}t_{jn})
\]
\[
=\chi_i(1+bs_{ii}t_{ji})\chi_j(1+bs_{ji}t_{jj})=\chi_i(1+bs_{ii}t_{ji})\chi_j\left(1+b\left(\sum_{k\neq i}s_{ki}t_{jk}\right)\right)
\]
since $v(bs_{ki}t_{jk})\ge\cond(\chi_j)$, but this is 
\[
\chi_i(1+bs_{ii}t_{ji})\chi_j(1-bs_{ii}t_{ji})
\]
since $\sum_{k}s_{ki}t_{jk}=\sum_k t_{jk}s_{ki}=(ts)_{ji}=0$, and this can be rewritten as
\[
\frac{\chi_i}{\chi_j}(1+bs_{ii}t_{ji})\chi_j(1-b^2s_{ii}^2t_{ji}^2)=\frac{\chi_i}{\chi_j}(1+bs_{ii}t_{ji})
\]
because if $i>j$ then $v(b^2)\ge 2c_{ij}\ge c_{ij}+c_{ji}$ and if $i<j$ then $v(b^2)\ge 2c_{ij}\ge c_{ij}+c_{ji}-1$ and $v(t_{ji})\ge1$. But since $v(b)<c_{ij}+c_{ji}-v(t_{ji})$, we have $v(bs_{ii}t_{ji})<\cond(\chi_i\chi_j^{-1})$, so we can choose $b$ to make $\frac{\chi_i}{\chi_j}(1+bs_{ii}t_{ji})\neq1$.

Finally, we verify that for this choice of $b$, we actually have $sxt\in J$. The $kl$th entry of $sbt$ is
\[
s_{k1}t_{1l}+\dotsb+s_{ki}(t_{il}+bt_{jl})+\dotsb+s_{kn}t_{nk}=\del_{kl}+bs_{ki}t_{jl}.
\]
We have
\[
v(bs_{ki}t_{jl})=c_{ij}+c_{ji}-v(t_{ji})-1+v(s_{ki})+v(t_{jl})
\]
\[
=c_{ij}+c_{ji}-v(t_{ji})-(c_{lj}+c_{jl}-v(t_{jl}))+c_{lj}+c_{jl}-1+v(s_{ki})
\]
\[
\ge c_{lj}+c_{jl}-1+v(s_{ki})\ge c_{kl}
\]
by condition~\ref{charclose}.
\end{proof}

\begin{rem}
We do not believe that either condition~\ref{jbig} or condition~\ref{charclose} of Proposition~\ref{irred} should be strictly necessary. Notably, most of the proof of Proposition~\ref{irred} can be easily rephrased to avoid references to condition~\ref{jbig}. Our only sticking point is the calculation of $\chi(j)$ in terms of $j_{11},\dotsc,j_{nn}$.
\end{rem}

We call $\chi$ ``simple'' if it satisfies the conditions of Proposition~\ref{irred}. By Frobenius reciprocity, we conclude that if $\chi$ is simple, $W^{sm,c}(\chi)$ contains $U^{sm}(\chi)$. Also note that if $\chi$ satisfies condition~\ref{jbig}, and the conductors of the nontrivial components of $\chi=(\chi_1,\dotsc,\chi_{n-1},1)$ are, in order from least to greatest, $c_{(1)}\le c_{(2)}\le\dotsb\le c_{(n-1)}$, then the index of $J$ in $\iw_p$ is
\[
p^{c_{(1)}+2c_{(2)}+\dotsb+(n-1)c_{(n-1)}-\frac{n(n-1)}2}=:p^{j(\chi)},
\]
and this is $\rank(U^{sm}(\chi))$. So if $\chi$ is simple and $\cond(\chi_i)=c$ for all $i\neq n$, then $W^{sm,c}(\chi)$ and $U^{sm}(\chi)$ have the same dimension and must actually be isomorphic.

\subsection{Hecke eigenvalues of ramified principal series}
\label{rochecalc}

The representations we are interested in will turn out to be ramified principal series of $GL_n(\rats_p)$, so we now cover the properties of these that we will need. To harmonize with the literature, for this section only, we will use different conventions from the rest of the paper. If $\chi=(\chi_1,\dotsc,\chi_n):(\rats_p^\times)^n\to\cplx$ is a smooth character of $T(\rats_p)=(\rats_p^\times)^n$, we will write
\[
i_{B(\rats_p)}^{GL_n(\rats_p)}\chi=\{f:GL_n(\rats_p)\to\cplx\mid f(bg)=\chi(b)f(g)\text{ for all }g\in GL_n(\rats_p)\text{ and }b\in B(\rats_p)\}
\]
for the representation of $GL_n(\rats_p)$ with the given underlying vector space and the right translation action of $GL_n(\rats_p)$. We let $\del^{1/2}:(\rats_p^\times)^n\to\cplx$ be the modulus character
\[
\del^{1/2}:=(|\cdot|^{(n-1)/2},|\cdot|^{(n-3)/2},\dotsc,|\cdot|^{(1-n)/2}).
\]
Then we define
\[
\pi(\chi):=\pi(\chi_1,\dotsc,\chi_n):=i_{B(\rats_p)}^{GL_n(\rats_p)}(\chi\del^{1/2}).
\]
The representation $\pi(\chi)$ is called the normalized parabolic induction of $\chi$.
Assume that for all $i\neq j$, we have $\chi_i(p)\neq \chi_j(p)p$. Let $J=\Gam(\un{c})$ be the subgroup defined at the beginning of Section~\ref{smstruct}. Let $\hsc(GL_n(\rats_p)\sslash J,\chi)$ be the subspace of $\hsc(GL_n(\rats_p))$ generated by the functions $\phi:GL_n(\rats_p)\to\cplx$ satisfying $\phi(j_1xj_2)=\chi(j_1)^{-1}\phi(x)\chi(j_2)^{-1}$ for all $j_1,j_2\in J$ and $x\in GL_n(\rats_p)$. 

\begin{lem}
The $(J,\chi)$-isotypic piece of $\pi(\chi)$ is $1$-dimensional.
\end{lem}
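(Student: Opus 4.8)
The plan is to identify the $(J,\chi)$-isotypic subspace of $\pi(\chi)$ with a space of $(J,\chi)$-equivariant functions on $B(\rats_p)\backslash GL_n(\rats_p)$ and then to compute this space via the Bruhat decomposition, just as in Roche's analysis of principal series types. Concretely, a vector $f \in \pi(\chi) = i_{B(\rats_p)}^{GL_n(\rats_p)}(\chi\del^{1/2})$ lying in the $(J,\chi)$-isotypic piece satisfies $f(bgj) = \chi\del^{1/2}(b)\,\chi(j)\,f(g)$ for all $b \in B(\rats_p)$, $j \in J$, $g \in GL_n(\rats_p)$. So such $f$ is determined by its restriction to a set of representatives for the double cosets $B(\rats_p)\backslash GL_n(\rats_p)/J$, subject to compatibility conditions on each double coset coming from the overlap $B(\rats_p) \cap gJg^{-1}$ (equivalently $g^{-1}B(\rats_p)g \cap J$). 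First I would invoke the Bruhat decomposition $GL_n(\rats_p) = \coprod_{w \in W} B(\rats_p)\, w\, B(\rats_p)$ (with $W$ the Weyl group, realized as permutation matrices), refine it to a decomposition into $B(\rats_p)$–$J$ double cosets using the Iwahori factorization of $J$, and check on each resulting double coset whether the two characters $b \mapsto \chi\del^{1/2}(b)$ (via the left action) and $j \mapsto \chi(j)$ (via the right action) are compatible on the stabilizer, i.e. whether they agree on $\{(b,j) : bg = gj\}$ for $g$ a representative.

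The key point is that compatibility forces the Weyl element $w$ appearing in the Bruhat cell to satisfy, for each simple reflection $s_{k}$ entering a reduced word, a relation of the shape $\chi_i(p) = \chi_j(p)\,p$ (coming from $\del^{1/2}$ contributing the factor $p$ on the relevant root subgroup), which is exactly excluded by the running hypothesis $\chi_i(p) \neq \chi_j(p)p$ for all $i \neq j$. Hence the only double coset that supports a nonzero $(J,\chi)$-equivariant function is the one containing the identity — the ``big cell'' relative to $J$, namely $B(\rats_p)\, J$ — and on that coset the value $f(1)$ may be chosen arbitrarily and then propagated consistently, because the overlap $B(\rats_p) \cap J = (T(\ints_p)\cap J)(N(\ints_p)\cap J) = B(\ints_p)\cap J$ is precisely where $\chi$ was defined by the factorization $\chi(j^- j j^+) = \chi(j)$ in Section~\ref{smstruct}, so the two characters agree there tautologically. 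This produces a one-dimensional space, and conversely the explicit function $f$ supported on $B(\rats_p)J$ with $f(bj) = \chi\del^{1/2}(b)\chi(j)$ is well-defined and nonzero, giving the lower bound on the dimension.

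The main obstacle will be the bookkeeping in the middle step: organizing the $B(\rats_p)$–$J$ double coset decomposition of each Bruhat cell carefully enough to extract, from the intersection $g^{-1}B(\rats_p)g \cap J$, exactly which multiplicative relations among the $\chi_i(p)$ must hold for a nonzero equivariant function to exist, and verifying that the modulus-character contribution really does produce the factor of $p$ that makes the hypothesis $\chi_i(p) \neq \chi_j(p)p$ bite. For $n = 2$ this is the classical computation underlying Casselman's and Roche's results, and for general $n$ it is a matter of propagating the $n=2$ obstruction along each simple root; I would handle it by reducing to rank-one calculations on the $SL_2$-triples attached to the simple roots, exactly as in the proof of Proposition~\ref{irred}, rather than by a direct matrix computation in $GL_n$. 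An alternative, cleaner route — which I would mention but not necessarily pursue — is to invoke Roche's theorem that $(J,\chi)$ is a type for the Bernstein component of $\chi$ together with the fact that the Hecke algebra $\hsc(GL_n(\rats_p)\sslash J, \chi)$ is isomorphic to an affine Hecke algebra whose module category matches that component, from which the $1$-dimensionality of the $(J,\chi)$-isotypic piece of an irreducible principal series is immediate by comparison with the Iwahori case; but since the paper is about to develop $\hsc(GL_n(\rats_p)\sslash J,\chi)$ explicitly, the hands-on Bruhat-cell argument is likely the intended proof.
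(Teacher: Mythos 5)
Your main proposed route has a genuine gap at its key step. In the Mackey analysis over $B(\rats_p)\bsl GL_n(\rats_p)/J$, the condition for a double coset $B(\rats_p)gJ$ to support a nonzero equivariant function is that $(\chi\del^{1/2})(gjg^{-1})=\chi(j)$ for all $j\in J\cap g^{-1}B(\rats_p)g$. But $J$ is compact, so $B(\rats_p)\cap gJg^{-1}$ is a compact subgroup of $B(\rats_p)$, and its image under the projection $B\to T$ is a compact subgroup of $T(\rats_p)$, hence lies in $T(\ints_p)$. Consequently $\del^{1/2}$ is identically $1$ on these intersections and the values $\chi_i(p)$ never enter the compatibility conditions at all: they only see the restrictions $\chi_i|_{\ints_p^\times}$ against the congruence depths built into $J$. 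So no relation of the shape $\chi_i(p)=\chi_j(p)p$ can be produced by this computation, and the running hypothesis $\chi_i(p)\neq\chi_j(p)p$ cannot be what kills the non-identity cosets; the number of contributing cosets for the full induced representation is governed by which Weyl elements preserve the ramification data (roughly, those $w$ with $\chi_{w(i)}\equiv\chi_i$ on $\ints_p^\times$ for all $i$). The unramified case makes the failure concrete: there $J=\iw_p$, $\chi$ is trivial on it, your hypothesis can hold, and yet the Iwahori-fixed space of an irreducible unramified principal series has dimension $n!$, so an argument whose only input is $\chi_i(p)\neq\chi_j(p)p$ and the Bruhat/Iwahori coset structure cannot yield dimension one. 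Where that hypothesis genuinely acts is in forcing irreducibility of $\pi(\chi)$ (it rules out $\chi_i\chi_j^{-1}=|\cdot|^{\pm1}$), not in pruning Mackey components.

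The route you mention only as an aside is in fact the paper's proof, and it is where the real content lives: by Roche's Theorem 6.3 the algebra $\hsc(GL_n(\rats_p)\sslash J,\chi)$ is abelian (in the paper's notation $W_\chi^0=S_\chi^0=1$ and $\Om_\chi=\ints^n$), so the $(J,\chi)$-isotypic piece breaks into one-dimensional eigenspaces for it; by Roche's Theorem 9.2 and the irreducibility of $\pi(\chi)$ — this is the point where $\chi_i(p)\neq\chi_j(p)p$ is used — that isotypic piece is an irreducible module over this abelian algebra, hence one-dimensional. If you insist on a hands-on Bruhat/Mackey proof, you would need the ramification-theoretic nondegeneracy of the tuple $(\chi_i|_{\ints_p^\times})$ relative to the depths defining $J$ as an input (the same input that makes the Hecke algebra abelian), and the obstructions you extract on each coset will be congruence conditions on units, not conditions on the values at $p$. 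The one part of your write-up that is solid is the lower bound: the explicit function supported on $B(\rats_p)J$ with $f(bj)=\chi\del^{1/2}(b)\chi(j)$ is well defined and nonzero, and it is essentially the generator used in the proof of Lemma~\ref{psevals}.
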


\begin{proof}
By Theorem 6.3 of \cite{roche98}, $\hsc(GL_n(\rats_p)\sslash J,\chi)$ is abelian. (To be precise, the theorem gives an isomorphism between $\hsc(GL_n(\rats_p)\sslash J,\chi)$ and $\hsc(W_\chi^0,S_\chi^0)\ten\cplx[\Om_\chi]$, where by our assumption that $\chi_i(p)\neq \chi_j(p)p$ for $i\neq j$, we have $W_\chi^0=S_\chi^0=1$, $\hsc(W_\chi^0,S_\chi^0)=\cplx$, and $\Om_\chi=\ints^n$.) Thus the $(J,\chi)$-isotypic piece of $\pi(\chi)$ decomposes as a representation of $\hsc(GL_n(\rats_p)\sslash J,\chi)$ into $1$-dimensional pieces. But by Theorem 9.2 of \cite{roche98}, because $\pi(\chi)$ is irreducible, the $(J,\chi)$-isotypic piece of $\pi(\chi)$ is irreducible as a representation of $\hsc(GL_n(\rats_p)\sslash J,\chi)$. Thus it is itself $1$-dimensional.
\end{proof}

\begin{lem}
\label{psevals}
If $a=(a_1,\dotsc,a_n)$ is such that $a_1\ge a_2\ge\dotsb\ge a_n$, the action of the element $[Ju^aJ]$ of $\hsc(GL_n(\rats_p)\sslash J,\chi)$ corresponding to $u^a=\diag(p^{a_1},\dotsc,p^{a_n})$ on the $(J,\chi)$-isotypic piece of $\pi(\chi)$ is multiplication by
\[
\chi(u^a)=\chi_1(p^{a_1})\dotsb\chi_n(p^{a_n}).
\]
\end{lem}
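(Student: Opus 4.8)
The plan is to exhibit the essentially unique $(J,\chi)$-eigenvector of $\pi(\chi)$ explicitly and then evaluate $[Ju^aJ]$ on it; since the $(J,\chi)$-isotypic piece is one-dimensional by the preceding lemma, this determines the eigenvalue.

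First I would construct the eigenvector. The big cell $B(\rats_p)\bar N(\ints_p)$ is open in $GL_n(\rats_p)$ and $J\cap\bar N$ is open in $\bar N(\ints_p)$, so $B(\rats_p)J$ is open. Using the Iwahori decomposition $J=(J\cap\bar N)(J\cap T)(J\cap N)$ together with the facts that $\del^{1/2}$ is trivial on $T(\ints_p)$ and that the extension of $\chi$ to $J$ is, by its very definition, trivial on $J\cap\bar N$ and on $J\cap N$, one sees that
\[
f_\chi(bj)=(\chi\del^{1/2})(b)\,\chi(j)\quad(b\in B(\rats_p),\ j\in J),\qquad f_\chi\equiv0\text{ off }B(\rats_p)J,
\]
is a well-defined element of $\pi(\chi)$. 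Because $B(\rats_p)J$ is stable under right translation by $J$, one reads off $j_0f_\chi=\chi(j_0)f_\chi$, so $f_\chi$ spans the $(J,\chi)$-line; as $f_\chi(1)=1$, the scalar by which $[Ju^aJ]$ acts is $([Ju^aJ]f_\chi)(1)$.

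Next I would decompose the double coset and compute. Because $a_1\ge\dotsb\ge a_n$, conjugation by $u^{-a}$ carries $J\cap\bar N$ into itself; writing $J=(J\cap N)(J\cap T)(J\cap\bar N)$ this yields $Ju^aJ=(J\cap N)u^aJ=\coprod_{k=1}^m n_ku^aJ$, the $n_k$ representing $(J\cap N)/u^a(J\cap N)u^{-a}$, and an entrywise count (again using $a_1\ge\dotsb\ge a_n$) gives $m=\prod_{i<j}p^{a_i-a_j}=\del^{1/2}(u^a)^{-2}$. For each representative $n_ku^a\in B(\rats_p)$, so $f_\chi(n_ku^aj)=\chi(j)f_\chi(n_ku^a)=\chi(j)(\chi\del^{1/2})(u^a)$ for all $j\in J$, the unipotent factor $n_k$ being killed by the character $\chi\del^{1/2}$ of $B$; meanwhile the generator of $\hsc(GL_n(\rats_p)\sslash J,\chi)$ supported on $Ju^aJ$ sends $n_ku^aj\mapsto\chi(n_k)^{-1}\chi(j)^{-1}=\chi(j)^{-1}$, again since $\chi$ kills $J\cap N$. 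Thus on each of the $m$ cosets the integrand $h\mapsto\phi(h)f_\chi(h)$ is the constant $(\chi\del^{1/2})(u^a)$, and summing---with the normalization under which the Hecke element attached to $u^a$ is $\del^{1/2}(u^a)$ times this characteristic function (the Bernstein normalization, i.e. the one under which Roche's isomorphism $\hsc(GL_n(\rats_p)\sslash J,\chi)\cong\cplx[\Om_\chi]$ with $\Om_\chi=\ints^n$ sends $u^a$ to the corresponding monomial)---gives $([Ju^aJ]f_\chi)(1)=\del^{1/2}(u^a)\cdot\del^{1/2}(u^a)^{-2}\cdot(\chi\del^{1/2})(u^a)=\chi(u^a)=\chi_1(p^{a_1})\dotsb\chi_n(p^{a_n})$.

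The only delicate point is the bookkeeping of the $\del^{1/2}$ factors, and this is where I expect essentially all the care to go: one must (i) verify the coset count is exactly $\del^{1/2}(u^a)^{-2}$, where the hypothesis $a_1\ge\dotsb\ge a_n$ is what forces $Ju^aJ$ into the one-sided shape $(J\cap N)u^aJ$ to begin with; (ii) check that $\chi$ extends compatibly enough that the twisting phases on the chosen coset representatives are trivial---precisely the reason the extension of $\chi$ to $J$ was arranged to vanish on the unipotent parts; and (iii) identify the Hecke element ``corresponding to $u^a$'' with the $\del^{1/2}$-normalized one. For (iii), if one prefers to sidestep explicit normalizations, one can argue structurally: the $(J,\chi)$-Hecke module of $\pi(\chi)$ is one-dimensional, hence a character of $\cplx[\Om_\chi]=\cplx[\ints^n]$, and by compatibility of Roche's isomorphism with parabolic induction this character must be the Bernstein parameter of $\pi(\chi)$, namely $u^a\mapsto\chi(u^a)$.
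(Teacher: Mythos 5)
Your proposal is correct in substance and follows the same basic strategy as the paper: exhibit the explicit $(J,\chi)$-vector supported on $B(\rats_p)J$, decompose $Ju^aJ$ into $\del(u^a)^{-1}=p^{\sum_{i<j}(a_i-a_j)}$ single cosets (dominance of $a$ being used exactly where you use it), and evaluate at $1$. Two points of comparison are worth recording. First, your choice of coset representatives in $J\cap N$ is a genuine small simplification: because $n_ku^a$ already lies in $B(\rats_p)$, you never need to evaluate $f$ at $ju^a$ for a general $j\in J$, whereas the paper takes arbitrary representatives and must prove $f(ju^a)=\chi(j)(\chi\del)(u^a)$ by rewriting $j^-j_1^+=j_2^+j_2^-c$ via Lemmas 3.1--3.2 of Roche and then conjugating the pieces across $u^a$; your route makes that rearrangement unnecessary. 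Second, the $\del^{1/2}$ bookkeeping, which you correctly identify as the delicate point, is handled differently in the two arguments. You work with the honest vector $f_\chi(bj)=(\chi\del^{1/2})(b)\chi(j)$ of $\pi(\chi)=i_{B(\rats_p)}^{GL_n(\rats_p)}(\chi\del^{1/2})$, find the eigenvalue $\del^{-1/2}(u^a)\chi(u^a)$ for the plain bi-$(J,\chi^{-1})$-equivariant indicator, and then place the missing $\del^{1/2}(u^a)$ into the normalization of the element ``corresponding to $u^a$.'' The paper instead writes its spanning vector with the transformation law $(\chi\del)(b)\chi(j)$ (a full modulus twist rather than $\del^{1/2}$), so that the factor $r=\del(u^a)^{-1}$ cancels on the nose and the unnormalized Hecke element gives $\chi(u^a)$ directly. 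The two computations agree up to where this uniform power of $p$ is lodged, and yours is the more explicit about the convention being invoked; just be aware that your fallback ``structural'' justification for the normalization --- compatibility of Roche's isomorphism with parabolic induction sending the $(J,\chi)$-module of $\pi(\chi)$ to its Bernstein parameter --- is essentially the content of the lemma itself, so it should be backed by a citation (e.g. to Roche's Section 9 or the Bushnell--Kutzko theory of covers) rather than treated as independent, or else one simply fixes the normalization by fiat as you do in the main computation.
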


\begin{proof}
The $(J,\chi)$-isotypic piece is generated by
\[
f(g)=\begin{cases}
(\chi\del)(b)\chi(j) & \text{ if }g=bj\text{ with }b\in B(\rats_p)\text{ and }j\in J,\\
0 & \text{otherwise.}
\end{cases}
\]
This is just because this function $f$ satisfies the $(J,\chi)$-isotypic condition by construction, and is well-defined because $(\chi\del)(b)=\chi(b)$ for any $b\in B\cap J$. We claim that
\begin{equation}
\label{lempsevalsnewvec}
f(ju^a)=\chi(j)\chi(u^a)\del(u^a)=\chi(j)\chi(u^a)\del(u^a)f(1)\text{ for any }j\in J.
\end{equation}
The lemma follows from this, because if $Ju^aJ=\coprod_{i=1}^rj_iu^aJ$, then
\[
([Ju^aJ]f)(1)=\sum_{i=1}^r\chi(j_i)^{-1}f(j_iu^a)=\sum_{i=1}^r\chi(u^a)\del(u^a)f(1)=\chi(u^a)f(1)
\]
because $r=\del(u^a)^{-1}$ (since the same calculation as in Proposition~\ref{scale} shows that the index of $J$ in $[(u^a)^{-1}Ju^a]J$ is
\[
p^{\sum_{i<j}(a_i-a_j)}=p^{(n-1)a_1+(n-3)a_2+\dotsb+(1-n)a_n}).
\]
To prove (\ref{lempsevalsnewvec}), first write $j=j^-j^0j^+$ where $j^-\in J\cap\bar{N}(\ints_p)$, $j^0\in T(\ints_p)$, and $j^+\in J\cap N(\ints_p)$. Then we have $\chi(j)=\chi(j^0)$. Let $j_1^+=j^0j^+(j^0)^{-1}$; then $j_1^+\in J\cap N(\ints_p)$ as well, and $j=j^-j_1^+j^0$. Use Lemmas 3.1 and 3.2 of~\cite{roche98} to write $j^-j_1^+=j_2^+j_2^-c$, where $j_2^+\in J\cap N(\ints_p)$, $j_2^-\in J\cap\bar{N}(\ints_p)$, and $c\in T(\ints_p)$ is a correction torus element in the kernel of $\chi$. Then we have
\[
ju^a=j_2^+j_2^-cj^0u^a=u^a[(u^a)^{-1}j_2^+u^a][(u^a)^{-1}j_2^-u^a](j^0c).
\]
We have $(u^a)^{-1}j_2^+u^a\in N(\rats_p)$, and by the same calculation as in Proposition~\ref{scale}, we have $(u^a)^{-1}j_2^-u^a\in J\cap\bar{N}(\ints_p)$. Therefore
\[
f(ju^a)=f(u^a[(u^a)^{-1}j_2^+u^a][(u^a)^{-1}j_2^-u^a](j^0c))
\]
\[
=(\chi\del)(u^a[(u^a)^{-1}j_2^+u^a])\chi([(u^a)^{-1}j_2^-u^a](j^0c))=(\chi\del)(u^a)\chi(j^0)=(\chi\del)(u^a)\chi(j)
\]
as desired.
\end{proof}

\subsection{Structure of automorphic representations of locally algebraic weights}
\label{repstructure}


Let $f\in S_{t\chi,c}(G,U_0(p))$ be a classical eigenform, and let $\pi_{f,p}$ be an irreducible component of the local component at $p$ of the automorphic representation $\pi_f$ associated to $f$ in Section~\ref{assocautrep}. We first verify a standard fundamental fact about the structure of $\pi_{f,p}$ for those $f$ associated to points on the eigenvariety $\dsc$. 

\begin{prop}
\label{jacquet}
$f$ may be associated to a classical point $x$ on $\dsc$ (equivalently, $f$ is finite-slope) if and only if $\pi_{f,p}$ has nonzero Jacquet module with respect to $B$, or equivalently is a subquotient of a principal series.
\end{prop}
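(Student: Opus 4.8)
\emph{Proof proposal.} The plan is to translate the statement into a claim about the action of the Hecke double-coset operators $[\iw_p u^a\iw_p]$ on $\pi_{f,p}$ and then invoke standard structure theory. I would first dispose of the purely representation-theoretic equivalence: for an irreducible smooth representation $\pi$ of $GL_n(\rats_p)$, one has $\pi_N\neq 0$ iff $\pi$ is not supercuspidal iff $\pi$ is a subquotient (in fact a subrepresentation) of some $i_{B(\rats_p)}^{GL_n(\rats_p)}\theta$; the nontrivial implications are Jacquet's theorem, the fact that supercuspidals do not occur as subquotients of parabolic inductions from proper parabolics, and, conversely, Frobenius reciprocity applied to a character appearing in $\pi_N$ (Bernstein--Zelevinsky, Casselman). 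So it remains to show that $f$ is finite slope iff $\pi_{f,p}$ is a subquotient of a principal series.

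Next I would unwind Section~\ref{assocautrep}: the form $f$ determines, via $\phi\mapsto f_\infty^\vee(\phi,\cdot)$, an $\iw_p$-equivariant inclusion of an irreducible constituent of $W^{sm,c}(\chi)^\vee\ten S_t^\vee$ into $\pi_{f,p}$, and under this identification $U_p^a=[\usc u^a\usc]$ acts on the finite-dimensional space of $\iw_p$-level vectors attached to $f$ through the double-coset operator $[\iw_p u^a\iw_p]$ on $\pi_{f,p}$, up to the nonzero scalar $p^{\sum_i a_it_i}$ coming from the normalization discussed at the end of Section~\ref{localgrepdef}. Since $f$ being finite slope means exactly that $U_p$ (equivalently all $U_p^a$, $a\in\Sg$) acts invertibly on the generalized eigenspace of $f$, the proposition reduces to: for $u^a\in\Sg^{--}$, the operator $[\iw_p u^a\iw_p]$ acts invertibly on the relevant $\iw_p$-isotypic subspace of $\pi_{f,p}$ iff $\pi_{f,p}$ is a subquotient of a principal series. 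By the analysis of Section~\ref{smstruct} the vector attached to $f$ carries a $J$-isotypic vector for a character in the $W$-orbit of $\chi$, so one may equivalently work with Roche's algebra $\hsc(GL_n(\rats_p)\sslash J,\chi)$ and the operators $[Ju^aJ]$.

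For the implication ``$\pi_{f,p}$ a principal series subquotient $\Rightarrow$ finite slope'': then $\pi_{f,p}^{(J,\chi')}\neq 0$ for some $\chi'$ in the $W$-orbit of $\chi$, and it is a module over the affine-Hecke-type algebra $\hsc(GL_n(\rats_p)\sslash J,\chi)\cong\hsc(W_\chi^0,S_\chi^0)\ten\cplx[\Om_\chi]$ of Roche used in Section~\ref{rochecalc}. In the Bernstein presentation $[Ju^aJ]$ for $a$ dominant is a nonzero scalar multiple of the Bernstein element $\theta_a$, which is a unit (with inverse $\theta_{-a}$); hence $[Ju^aJ]$ is a unit in this algebra and acts invertibly on every nonzero module, in particular on $\pi_{f,p}^{(J,\chi')}$. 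Tracing back, $U_p^a$ acts invertibly on $f$'s generalized eigenspace, so $f$ is finite slope. (When $\chi$ is unramified this is just the statement that $[\iw_p u^a\iw_p]$ is a unit in the Iwahori--Hecke algebra, and the computation of Lemma~\ref{psevals} is a concrete instance.) For the converse I would argue by contraposition: if $\pi_{f,p}$ is supercuspidal its matrix coefficients are compactly supported modulo the center, and since $u^{ma}$ leaves every compact subset of $GL_n(\rats_p)/Z$ as $m\to\infty$ for $u^a\in\Sg^{--}$ while $[Ju^aJ]^m$ is supported in $Ju^{ma}J$, the operator $[Ju^aJ]$ is nilpotent on $\pi_{f,p}^{(J,\chi')}$; hence $U_p^a$ is nilpotent on $f$'s generalized eigenspace, i.e. its eigenvalue there is $0$ and $f$ is not finite slope.

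The main obstacle is the bookkeeping in the middle step: identifying precisely which $\iw_p$-isotypic piece of $\pi_{f,p}$ corresponds to $f$, checking that the $U_p^a$-action on $f$ really matches $[\iw_p u^a\iw_p]$ (equivalently $[Ju^aJ]$) on that piece including the $p^{\sum_i a_it_i}$-normalization, and then, for $\chi$ not necessarily ``simple'', locating that piece inside Roche's $(J,\chi)$-Hecke module so that the unit-versus-nilpotent dichotomy applies. Once this translation is set up, both directions are short.
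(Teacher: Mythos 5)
Your proposal takes a genuinely different route from the paper, but as written it has two gaps. The paper proves both directions in one stroke from Casselman's canonical-lifting result (Proposition~\ref{canonlift}, i.e.\ Proposition 4.1.4 of \cite{casselman95}) applied with $P=B$: the operator $[K_0u^aK_0]$ projects $V^{K_0}$ onto $V_N^{M_0}$, surjectively always and isomorphically after replacing $u^a$ by a large power, which immediately converts ``nonzero $U_p^a$-eigenvalue'' into ``nonzero Jacquet module'' and back, with no type theory. Your converse direction instead argues by contraposition from the chain ``$\pi_N\neq0$ iff $\pi$ not supercuspidal iff $\pi$ a subquotient of a principal series''. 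The middle equivalence is false for $n\ge3$: for instance, for $GL_3(\rats_p)$ the parabolic induction from the $(2,1)$ Levi of $\sg\ten\eta$, with $\sg$ supercuspidal on $GL_2(\rats_p)$ and $\eta$ a character of $\rats_p^\times$, is not supercuspidal, is not a subquotient of any principal series, and has vanishing Jacquet module with respect to $B$. For such $\pi_{f,p}$ your nilpotence argument collapses, since the matrix coefficients are not compactly supported modulo the center; handling these intermediate cuspidal supports is exactly what Casselman's asymptotics/canonical lifting provides and what your proposal omits. So the contraposition only disposes of the supercuspidal case, not of the full negation of ``principal series subquotient''.

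The second gap is the claim, attributed to Section~\ref{smstruct}, that the $\iw_p$-constituent attached to $f$ carries a $(J,\chi')$-isotypic vector for some $\chi'$ in the $W$-orbit of $\chi$. Section~\ref{smstruct} only produces such a vector inside the full induced space $W^{sm,c}(\chi)$; an arbitrary classical eigenform $f\in S_{t\chi,c}(G,U_0(p))$ --- which is what Proposition~\ref{jacquet} is about --- generates an irreducible $\iw_p$-constituent that need not contain it, and Proposition~\ref{notfs} exhibits eigenforms for which $\pi_{f,p}$ contains no such vector at all. Consequently the wholesale translation of the problem into Roche's algebra $\hsc(GL_n(\rats_p)\sslash J,\chi)$ is not available at this level of generality: placing $\pi_{f,p}$ in the principal-series Bernstein component attached to $\chi$ is the content of the later Proposition~\ref{manyps}, and it is proved only for the subspace $(U^{sm}(\chi)\ten S_t)(G,U_0(p))$ using \cite{roche98}. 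Your unit-versus-nilpotent dichotomy for $[Ju^aJ]$ (Bernstein element a unit on the principal-series side, nilpotent on the supercuspidal side) is an attractive argument for that smaller subspace, and the normalization bookkeeping you flag (the $p^{\sum_ia_it_i}$ factor, compatibility of $[U_0(p)u^aU_0(p)]$ with $[Ju^aJ]$) is indeed manageable as in Proposition~\ref{lamtopsi}; but to prove the proposition as stated you need an argument, like the paper's use of Proposition~\ref{canonlift} for an arbitrary compact open $K_0$ with Iwahori factorization, that makes no appeal to $(J,\chi)$-vectors and covers all cuspidal supports.
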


To show this, we use the following proposition of Casselman in \cite{casselman95} on canonical liftings. Recall the submonoid $\Sg^{--}$ from Section~\ref{upa}. Also recall that by definition, the Jacquet module of a representation $(\pi,V)$ of $GL_n(\rats_p)$ with respect to a parabolic subgroup $P$ with Levi factorization $P=MN$ is the space $V_N$ of $N$-coinvariants of $V$, which is naturally a representation of $M$. (See Sections 3 and 4 of~\cite{casselman95} for more basic information about Jacquet modules.)

\begin{prop}[Casselman, Proposition 4.1.4]
\label{canonlift}
Let $(\pi,V)$ be an admissible representation of $GL_n(\rats_p)$, $P=MN$ a parabolic subgroup with Levi factorization, and $K_0 =\bar{N}_0 M_0 N_0$ a compact open subgroup with Iwahori factorization. If $u^a\in \Sg^{--}$, then the projection from $V^{K_0}$ to $V_N^{M_0}$ given by $[K_0u^aK_0]$ is a surjection. If $u^aN_1(u^a)^{-1}\subseteq N_0$, where $N_1$ is a compact subgroup of $N$ such that $V^{K_0}\cap V(N)\subseteq V(N_1)$, then the projection is an isomorphism.
\end{prop}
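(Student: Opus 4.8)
The statement is Casselman's canonical lifting theorem, and the plan is to follow his argument. Write $e_H=\one_H/\mu(H)$ for the idempotent attached to a compact open subgroup $H$. Two facts do all the work: the Iwahori factorization of $K_0$, which yields the idempotent factorization $e_{K_0}=e_{\bar N_0}\star e_{M_0}\star e_{N_0}$, so that for $v\in V^{K_0}$ one has $[K_0u^aK_0]v=(\text{volume const})\cdot e_{\bar N_0}e_{M_0}e_{N_0}\pi(u^a)v$ with the innermost averaging $e_{N_0}$ hitting $\pi(u^a)v$ first; and the defining feature of the superscript $--$, namely that conjugation by $u^a\in\Sg^{--}$ strictly contracts $\bar N_0$ (so $\bigcap_k u^{ak}\bar N_0u^{-ak}=\{1\}$), strictly dilates $N_0$, and normalizes $M_0$. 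I would first record the natural $M_0$-equivariant map $j_N\colon V^{K_0}\to V_N^{M_0}$ induced by $V\twoheadrightarrow V_N$ (landing in the $M_0$-invariants because $V^{K_0}$ is $M_0$-fixed), and check — using that partial averaging over $N_0\subseteq N$ is invisible after passing to $V_N$, and that averaging over $\bar N_0$ does not move the $V_N$-image of a vector fixed deep inside $N$ — that $j_N$ carries $[K_0u^aK_0]$ to the action of $u^a\in M$ on $V_N^{M_0}$, up to the twist by $\del_P(u^a)$.

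For surjectivity I would run the canonical lifting. Given $\bar v\in V_N^{M_0}$, choose a lift; by smoothness it is fixed by some compact open $N_1\subseteq N$, and averaging over $M_0$ (allowed since $\bar v$ is $M_0$-fixed) and over $N_0$ (harmless in $V_N$) produces a lift $v_0$ fixed by $M_0N_0N_1$ with $p_N(v_0)=\bar v$. Averaging $v_0$ over $\bar N_0$ directly would in general change $p_N(v_0)$, so instead one conjugates deep into the contracting direction before averaging: since $u^{ak}\bar N_0u^{-ak}\to\{1\}$, the vector $v:=\del_P(u^a)^{-k}\pi(u^{ak})e_{K_0}\pi(u^{-ak})v_0$ stabilizes for $k$ large, and I would verify by unwinding the Iwahori factorization that it lies in $V^{K_0}$ and satisfies $j_N(v)=\bar v$. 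One step of this stabilized procedure is exactly $[K_0u^aK_0]$ up to $\del_P(u^a)$, so the projection is surjective.

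For the refinement to an isomorphism, suppose $v\in V^{K_0}$ has $j_N(v)=0$, i.e.\ $v\in V(N)$; by hypothesis then $v\in V(N_1)$. Since $u^aN_1u^{-a}\subseteq N_0$, applying $\pi(u^a)$ gives $\pi(u^a)v\in\pi(u^a)V(N_1)=V(u^aN_1u^{-a})\subseteq V(N_0)$, so $e_{N_0}\pi(u^a)v=0$; on the other hand $\pi(u^a)v$ is fixed by $u^aN_0u^{-a}\supseteq N_0$, hence $e_{N_0}\pi(u^a)v=\pi(u^a)v$, whence $\pi(u^a)v=0$ and $v=0$. Thus $\ker j_N=0$, and with surjectivity $j_N$ is an isomorphism; equivalently, the same computation shows $[K_0u^aK_0]$ annihilates $V^{K_0}\cap V(N)$, which the $N_1$-hypothesis forces to vanish. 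The step I expect to be the genuine obstacle is the surjectivity argument — pinning down the canonical lifting map, checking that it is independent of $k$ for $k$ large, is $M_0$-equivariant, and inverts $V\twoheadrightarrow V_N$ — which rests on the elementary but delicate bookkeeping with the Iwahori factorization (the preparatory lemmas preceding Proposition 4.1.4 in Casselman's notes); the intertwining property and the kernel computation above are then routine.
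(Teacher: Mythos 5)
The paper itself gives no proof of this statement—it is quoted from Casselman's notes (Proposition 4.1.4 of \cite{casselman95})—so the comparison is with Casselman's argument, and your outline (intertwining of $[K_0u^aK_0]$ with the Jacquet-module action, canonical lifting for surjectivity, the $N_1$-condition for the kernel) is indeed the right skeleton. However, your bookkeeping of which unipotent is contracted is reversed, and the reversal is not cosmetic. For $u^a=\diag(p^{a_1},\dotsc,p^{a_n})\in\Sg^{--}$ (so $a_1>\dotsb>a_n$) and $P=B$ the upper-triangular Borel with radical $N$, conjugation $n\mapsto u^an(u^a)^{-1}$ multiplies the $(i,j)$ entry ($i<j$) by $p^{a_i-a_j}$ with $a_i-a_j>0$: it \emph{contracts} $N_0$ and \emph{dilates} $\bar{N}_0$. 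This is exactly what makes the hypothesis $u^aN_1(u^a)^{-1}\subseteq N_0$ achievable and is what the paper uses in the proof of Proposition~\ref{jacquet}; you assert the opposite. The correct intertwining mechanism is the coset decomposition $K_0u^aK_0=\coprod_{n\in N_0/u^aN_0(u^a)^{-1}}n\,u^aK_0$ (valid precisely because $(u^a)^{-1}\bar{N}_0u^a\subseteq\bar{N}_0$, $(u^a)^{-1}M_0u^a=M_0$, $u^aN_0(u^a)^{-1}\subseteq N_0$), which gives $[K_0u^aK_0]v=c\,e_{N_0}\pi(u^a)v$ for $v\in V^{K_0}$ and hence $p_N([K_0u^aK_0]v)=c\,\pi_N(u^a)p_N(v)$, since averaging over a compact subgroup of $N$ is invisible in $V_N$. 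With your directions the representatives would lie in $\bar{N}_0u^a$, and your substitute principle (``averaging over $\bar{N}_0$ does not move the $V_N$-image of a vector fixed deep inside $N$'') is not a valid general fact—nor is it needed once the cosets are taken on the correct side.

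The reversal becomes a genuine error in the kernel analysis: you conclude $V^{K_0}\cap V(N)=0$, i.e.\ that the canonical projection itself is injective on $V^{K_0}$. The false step is ``$\pi(u^a)v$ is fixed by $u^aN_0(u^a)^{-1}\supseteq N_0$''; in fact $u^aN_0(u^a)^{-1}\subseteq N_0$, so from $v\in V(N_1)$ and $u^aN_1(u^a)^{-1}\subseteq N_0$ you may only conclude $e_{N_0}\pi(u^a)v=0$, i.e.\ that $[K_0u^aK_0]$ \emph{annihilates} $V^{K_0}\cap V(N)$. The stronger conclusion cannot hold: the $N_1$-hypothesis is satisfiable for every admissible $V$ (choose $N_1$ using the finite-dimensionality of $V^{K_0}$, then take $u^a$ deep enough), while $V^{K_0}\cap V(N)$ is frequently nonzero—indeed the paper's Propositions~\ref{notfs} and~\ref{notfs-v2} exhibit many $U_p$-annihilated (infinite-slope) vectors, which are exactly such elements. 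The intended content of the isomorphism clause, and the way the paper uses it (Proposition~\ref{fs-appear}, where it is applied to the finite-slope part), is that $p_N$ restricts to an isomorphism from $[K_0u^aK_0]V^{K_0}$ onto $V_N^{M_0}$: the intertwining relation gives $\ker\bigl(p_N\circ[K_0u^aK_0]\bigr)=V^{K_0}\cap V(N)$, the annihilation statement gives $\ker[K_0u^aK_0]=V^{K_0}\cap V(N)$, and a dimension count against surjectivity finishes. Relatedly, your proposed canonical lift $\del_P(u^a)^{-k}\pi(u^{ak})e_{K_0}\pi(u^{-ak})v_0=\del_P(u^a)^{-k}e_{u^{ak}K_0u^{-ak}}v_0$ is an average over a conjugate of $K_0$, hence not visibly in $V^{K_0}$; the standard construction instead lifts $\pi_N(u^{-ak})\bar v$, applies $\pi(u^{ak})$ (the result is $\bar{N}_0$-fixed for $k$ large because conjugation by $u^{ak}$ dilates $\bar{N}$), and then applies $e_{K_0}$ written as $e_{N_0}\star e_{M_0}\star e_{\bar{N}_0}$, so that the $\bar{N}_0$-average acts trivially and the $M_0$- and $N_0$-averages do not change the image in $V_N$.
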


\begin{proof}[Proof of Proposition~\ref{jacquet}]
We apply Proposition~\ref{canonlift} with $P=B$ and $M=T$. 

Suppose first that $\pi_{f,p}=(\pi,V)$ has nonzero Jacquet module. Let $v\in V_N$ be a nonzero vector and let $M_0$ be a compact open subgroup of $M$ fixing $v$. Let $K_0$ be a compact open subgroup of $G$ such that $K_0\cap M=M_0$. By the proposition of Casselman, $[K_0u^aK_0]V$ surjects onto $V_N^{M_0}\neq0$, so is itself nonzero. Thus $[K_0u^aK_0]$ has some nonzero eigenvalue, corresponding to an eigenvector in $\pi_{f,p}$ which must be the image of an eigenform in $S_{t\chi,c}(G,U_0(p))$ by the procedure of Section~\ref{assocautrep}.

Now suppose in the other direction that $\pi=\pi_{f_x}$, so contains a vector $\im(f_x)$ with nonzero Hecke eigenvalue for $[K_0u^aK_0]$ for some compact open subgroup $K_0$ and all $a\in \Sg^-$. Choose a compact subgroup $N_1$ of $N$ such that $V^{K_0}\cap V(N)\subseteq V(N_1)$. We claim that for sufficiently large powers $(u^a)^k$ of $u^a$, we must have $(u^a)^k N_1(u^a)^{-k}\subseteq N_0$; this is just the effect of conjugation by $(u^a)^k$ on the $ij$th entry of $N_1$ is scaling by $p^{k(a_i-a_j)}$, and $k(a_i-a_j)$ becomes arbitrarily large as $k$ does. Then $[K_0(u^a)^kK_0]V\cong V_N^{M_0}$, and we must have $V_N^{M_0}\neq 0$.
\end{proof}

Now assume $\chi_n$ is trivial and let $c_0=\max_{1\le i<n}\cond(\chi_i)$. Let $(W^{sm,c_0}(\chi))^\perp$ be the complement of $W^{sm,c_0}(\chi)$ in $W^{sm,c}(\chi)$. We now observe that as $c$ goes to infinity, almost all eigenforms in $S_{t\chi,c}(G,U_0(p))$ are infinite-slope.

\begin{prop}
\label{notfs}
Suppose that $f$ is an eigenform in $((W^{sm,c_0}(\chi))^\perp\ten S_t)(G,U_0(p))\subset S_{t\chi,c}(G,U_0(p))$. Then $U_pf=0$, $f$ is not associated to a point on the eigenvariety, and $\pi_{f,p}$ is not a subquotient of a principal series.
\end{prop}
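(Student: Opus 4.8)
The plan is to deduce all three conclusions from the single fact that $U_p f = 0$, and to prove that vanishing by applying the ``canonical lifting'' input (Proposition~\ref{canonlift}) the other way around. First, note that the last two conclusions follow formally from the first: if $U_p f = 0$ then the $\lam$-value $\lam_1$ associated to $f$ (and in fact several of them) vanishes, so $f$ is not finite-slope, hence is not associated to a point on $\dsc$; and then by Proposition~\ref{jacquet}, $\pi_{f,p}$ has trivial Jacquet module with respect to $B$, so it cannot be a subquotient of a principal series (every subquotient of a principal series has nonzero Jacquet module, since the functor $V \mapsto V_N$ is exact and nonzero on $i_B^{GL_n}\chi$). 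So the whole proposition reduces to showing $U_p f = 0$.

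To show $U_p f = 0$, I would argue representation-theoretically using the description of the $U_p$-action via double cosets. Recall $U_p = U_p^{(n-1,\dots,1,0)}$ corresponds to $u^a = \diag(p^{n-1},\dots,p,1) \in \Sg^{--}$. Via Lemma~\ref{hcoords}, $U_p f$ is built out of translates $(\zt_i u_{ij}^{-1})_p . f(x_k)$ where $\zt_i$ runs over left $\usc$-coset representatives of $\usc u^a \usc$; concretely, on each component $f(x_k) \in W = W^{sm,c}(\chi)\ten S_t$, the action of $U_p$ factors through the operator $[\iw_p u^a \iw_p]$ acting on $W$ (composed with the tame Hecke data, which is harmless since we have an eigenform). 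Now $f$ lies in $(W^{sm,c_0}(\chi))^\perp \ten S_t$, i.e. its smooth part, under the embedding $W^{sm,c_0}(\chi) \hookrightarrow W^{sm,c}(\chi)$, is orthogonal to the ``minimal-level'' subspace. The key computation is that $[\iw_p u^a \iw_p]$, when acting on $W^{sm,c}(\chi) = \ind_{B(\ints_p)/B(\ints_p)\cap\Gam(c)}^{\iw_p/\Gam(c)}\chi$ and then projecting, annihilates the complement $(W^{sm,c_0}(\chi))^\perp$: the point is that the double-coset operator associated to a highly-contracting diagonal element factors through a Jacquet-type projection, and the image of that projection is precisely the ``new-at-level-$c_0$'' part $W^{sm,c_0}(\chi)$ — anything supported deeper in $\iw_p$ (that is, in the complement) is killed because conjugating by $u^a$ pushes lower-unipotent entries into $p^{c_0}\ints_p$, collapsing the relevant coset sum. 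Making this precise is where I would invoke Proposition~\ref{canonlift}: the composite $V^{K_0} \xrightarrow{[K_0 u^a K_0]} V_N^{M_0}$ identifies the image of $U_p$ with a Jacquet-module quotient, and on the smooth side the Jacquet module of $\ind_{\Gam_0(c_0)}^{\iw_p}\chi$-type representations ``stabilizes'' at conductor $c_0$, so the complement maps to zero.

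The main obstacle will be the last step: making rigorous the claim that $[\iw_p u^a\iw_p]$ kills $(W^{sm,c_0}(\chi))^\perp$. One clean way to organize it: compute the matrix coefficients of $[\iw_p u^a \iw_p]$ directly on the monomial-type basis of $W^{sm,c}(\chi)$ adapted to the left-coset decomposition $\iw_p = \coprod \Gam(c)\cdot(\text{reps})$, using the explicit conjugation formula from Proposition~\ref{scale} ($z_{ij}\mapsto p^{a_i-a_j}z_{ij}$ with $a_i-a_j>0$ for $i>j$) together with the Plücker-coordinate identities of Section~\ref{plucker}; one sees that after conjugation every lower-unipotent coordinate acquires a factor $p^{a_i-a_j} \ge p$, so that any basis vector whose ``depth'' exceeds $c_0$ gets averaged over a $\Gam(c)/\Gam(c_0)$-orbit of a character that is nontrivial on that orbit, forcing the sum to vanish. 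I expect the combinatorial bookkeeping of which coordinates survive to be the fussy part, but the mechanism — contraction by $u^a$ plus orthogonality of $\chi$ on the extra cosets — is exactly the standard reason new vectors at minimal level carry all the finite slope, and everything else is annihilated.
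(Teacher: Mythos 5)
Your reduction of the second and third conclusions to the single statement $U_pf=0$ is fine and matches the logic the paper leaves implicit (via Proposition~\ref{jacquet}). The problem is the mechanism you propose for $U_pf=0$. Your central claim is that the double coset operator $[\iw_p u^a\iw_p]$ literally annihilates the complement $(W^{sm,c_0}(\chi))^\perp$ inside $W^{sm,c}(\chi)$. That is false whenever $c>c_0+1$: translation by $u^a=\diag(p^{n-1},\dotsc,p,1)$ contracts the near-diagonal coordinates by exactly one power of $p$ (Proposition~\ref{scale}, and this is precisely the content of Corollary~\ref{shrink}: one application of $U_p$ improves the radius from $\un{c}$ only to $\max\{c_{ij}-1,c^0_{ij}\}$), so a vector of ``conductor'' $c$ in the smooth factor is sent to a vector of conductor $c-1$, which for $c-1>c_0$ still lies in the complement, and the coset sum does not force vanishing. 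Concretely, for $n=2$ and $\chi$ trivial ($c_0=1$), take $f$ given on the coordinate $pz_{21}$ by an additive character $\psi$ of conductor $3$; then $U_pf$ is $\psi(p\,\cdot)$ times $\sum_{b\bmod p}\psi(pb)=\frac{\zeta^p-1}{\zeta-1}$ with $\zeta=\psi(p)$ of order $p^2$, which is nonzero, so $U_pf$ is a nonzero vector of conductor $2$, still orthogonal to $W^{sm,1}$. So no bookkeeping with Pl\"ucker coordinates or character orthogonality will make your step go through, and Proposition~\ref{canonlift} does not repair it: the statement you want is simply not true at the level of the operator on the smooth model.

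What saves the proposition is the eigenform hypothesis, which your argument never actually uses. The correct (and the paper's) route is a radius-of-analyticity argument: by Corollary~\ref{shrink} and Proposition~\ref{radindep}, a $U_p$-eigenform with \emph{nonzero} eigenvalue must already lie in $\ssc_{t\chi,\un{c}^0}(G,U_0(p))$ (iterating $U_p$ contracts everything into the minimal radius, and $f=\lam^{-k}U_p^kf$), and that space meets $((W^{sm,c_0}(\chi))^\perp\ten S_t)(G,U_0(p))$ trivially; hence the eigenvalue is $0$ and $U_pf=0$. If you repair your write-up along these lines — replace ``$U_p$ kills the complement'' by ``$U_p^k$ maps everything into the $c_0$-level subspace, so an eigenform in the complement must have eigenvalue $0$'' — you recover essentially the paper's proof; as currently stated, the key step fails.
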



\begin{proof}
By Proposition~\ref{radindep}, in order for $U_pf$ to be nonzero, $f$ must lie in $\ssc_{t\chi,c_0}(G,U_0(p))$. But the intersection of $\ssc_{t\chi,c_0}(G,U_0(p))$ with $((W^{sm,c_0}(\chi))^\perp\ten S_t)(G,U_0(p))$ is trivial.
\end{proof}

Now recall the $\iw_p$-representation $U^{sm}(\chi):=\ind_J^{\iw_p}\chi$ from the beginning of Section~\ref{smstruct}. By the discussion at the end of Section~\ref{smstruct}, if $\chi$ is simple, $(U^{sm}(\chi)\ten S_t)(G,U_0(p))$ is a subspace of $S_{t\chi,c}(G,U_0(p))$. Even if $\chi$ is not simple, the following is true. 

\begin{prop}
\label{fs-appear}
There is a vector space embedding of $(U^{sm}(\chi)\ten S_t)(G,U_0(p))^{fs}$ (as at the end of Section~\ref{upa}) into $\ssc_{t\chi,c}(G,U_0(p))$ which preserves systems of $\hsc$-eigenvalues.
\end{prop}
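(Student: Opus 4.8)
The plan is to produce an $\hsc$-equivariant map from $(U^{sm}(\chi)\otimes S_t)(G,U_0(p))$ into the classical subspace $S_{t\chi,c}(G,U_0(p))\subseteq\ssc_{t\chi,c}(G,U_0(p))$ and to show it is injective on finite-slope parts. First I would build the map on coefficient modules. Frobenius reciprocity applied to the $(J,\chi^{-1})$-isotypic vector $f\in W^{sm,c}(\chi)$ exhibited above gives a canonical nonzero $\iw_p$-map $\phi_0:U^{sm}(\chi)=\ind_J^{\iw_p}\chi\to W^{sm,c}(\chi)$ sending the standard generator (the function supported on $J$) to $f$. A short computation shows $f$ is right-invariant under $\Gam(c_0)$ — using that $\Gam(c_0)$ is normal in $\iw_p$, that $\Gam(c_0)\subseteq J$ because all the $c_{ij}$ defining $J$ are at most $c_0$, and that each $\chi_i$ is trivial on $1+p^{c_0}\ints_p$ — so the image of $\phi_0$ lies in the subrepresentation $W^{sm,c_0}(\chi)\hookrightarrow W^{sm,c}(\chi)$. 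Tensoring with $S_t$ gives $\Phi:=\phi_0\otimes\id_{S_t}:U^{sm}(\chi)\otimes S_t\to W^{sm,c}(\chi)\otimes S_t=S_{t\chi,c}$. Giving $S_t$ its algebraic structure and $U^{sm}(\chi)$ the module structure over the submonoid $\inn{\iw_p,\Sg^-}$ of $G(\rats_p)$ obtained by extending $\chi$ to a character $\tilde\chi$ of $J\cdot T(\rats_p)$ with $\tilde\chi(p)=1$ — equivalently via Roche's Hecke algebra from Section~\ref{rochecalc}, so the $U_p^a$ act as in Lemma~\ref{psevals} — one checks, using the $\Sg^-$-normalizations of Sections~\ref{plucker} and~\ref{upa}, that $\Phi$ intertwines the $U_p^a$-operators. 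Finally, applying the functor $(-)(G,U_0(p))$, which after shrinking $\usc^p$ to be neat is the exact functor $V\mapsto V^h$, yields an $\hsc$-equivariant map $\Phi_*:(U^{sm}(\chi)\otimes S_t)(G,U_0(p))\to S_{t\chi,c}(G,U_0(p))\subseteq\ssc_{t\chi,c}(G,U_0(p))$ with $\ker\Phi_*=\bigl((\ker\phi_0)\otimes S_t\bigr)(G,U_0(p))$.

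It then remains to see $\Phi_*$ is injective on finite-slope parts, i.e. that $\ker\Phi_*$ has no finite-slope vectors. The idea is that a $U_p$-finite-slope eigenform $g\in(U^{sm}(\chi)\otimes S_t)(G,U_0(p))$ is recovered from its $(J,\chi^{-1})$-isotypic projection, which is nonzero: since $U_p=U_p^{(n-1,\dots,1,0)}$ acts invertibly on the eigenline of $g$ while, by the canonical-lifting argument behind Proposition~\ref{jacquet} (Proposition~\ref{canonlift}) together with the eigenvalue computation of Lemma~\ref{psevals}, sufficiently high powers of $U_p^a$ with $a\in\Sg^{--}$ carry $U^{sm}(\chi)\otimes S_t$ into its $(J,\chi^{-1})$-isotypic subspace. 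On that subspace $\phi_0$ is injective — the $(J,\chi^{-1})$-isotypic line of $U^{sm}(\chi)$ is one-dimensional with generator sent to $f\ne0$ — so $\Phi_*g\ne0$. Hence $\Phi_*$ restricts to the desired $\hsc$-eigensystem-preserving embedding of finite-slope parts, whose image in fact lands in $S_{t\chi,c}(G,U_0(p))^{fs}\subseteq\ssc_{t\chi,c}(G,U_0(p))$. When $\chi$ is simple this step is vacuous, since then $U^{sm}(\chi)$ is irreducible by Proposition~\ref{irred}, $\phi_0$ is injective, and $U^{sm}(\chi)\otimes S_t$ is literally an $\inn{\iw_p,\Sg^-}$-submodule of $S_{t\chi,c}$.

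The step I expect to be the main obstacle is justifying that $\ker\phi_0$ contributes only infinite-slope forms, i.e. making the claim that $U_p$ localizes at the $(J,\chi)$-type precise. The conceptual way to do this — and the one flagged in the introduction — is via Emerton's locally analytic Jacquet functor $J_B$: the finite-slope part of $(U^{sm}(\chi)\otimes S_t)(G,U_0(p))$, whose elements are locally algebraic automorphic forms, embeds under $J_B$ into the space of finite-slope systems of $\hsc$-eigenvalues on the locally analytic automorphic forms of weight $t\chi$, and by Roche's type theory each such system is pinned down by a single $(J,\chi)$-newvector whose $U_p^a$-eigenvalues are those of Lemma~\ref{psevals}; this is exactly the information recorded by $\Phi$, so $\Phi_*$ cannot annihilate a finite-slope eigenform. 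A more elementary alternative is to check directly, from the Iwahori double-coset description of $U_p^a$ (Lemma~\ref{hcoords}, Proposition~\ref{scale}) and the contraction of $\bar N$ under conjugation by elements of $\Sg^{--}$, that some power of $U_p^a$ maps $U^{sm}(\chi)$ onto its $(J,\chi^{-1})$-isotypic line; combined with the radius-independence of the characteristic power series (Proposition~\ref{radindep}) this forces $\bigl((\ker\phi_0)\otimes S_t\bigr)(G,U_0(p))^{fs}=0$ exactly as in Proposition~\ref{notfs}.
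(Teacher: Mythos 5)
Your construction of $\phi_0:U^{sm}(\chi)\to W^{sm,c}(\chi)$ via Frobenius reciprocity is fine, but the strategy has a genuine gap at the injectivity step, and the fallback you propose cannot close it. If $\Phi_*=(\phi_0\ten\id)_*$ is $\hsc$-equivariant, then $\ker\Phi_*=\bigl((\ker\phi_0)\ten S_t\bigr)(G,U_0(p))$ is $\hsc$-stable; but by Propositions~\ref{manyps} and~\ref{jacquet} (whose proofs are independent of Proposition~\ref{fs-appear}) \emph{every} eigenform in $(U^{sm}(\chi)\ten S_t)(G,U_0(p))$ generates a principal series at $p$ and is therefore finite slope --- the paper deduces from exactly these two propositions that $U_p$ is injective on the whole space. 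So if $\ker\phi_0\neq0$, the kernel consists of finite-slope forms and $\Phi_*$ fails to be injective on the finite-slope part; your claim that ``$\ker\Phi_*$ has no finite-slope vectors'' can only hold if $\phi_0$ is injective. That injectivity is known only when $\chi$ is simple (via irreducibility of $U^{sm}(\chi)$, Proposition~\ref{irred}), and it is precisely what is unavailable for general $\chi$: your route, if it worked, would embed the finite-slope part into the \emph{classical} subspace $S_{t\chi,c}(G,U_0(p))$ for every $\chi$, whereas the paper obtains that only for simple $\chi$ (Theorem~\ref{classifyps}) and deliberately states Proposition~\ref{fs-appear} with target the locally analytic space $\ssc_{t\chi,c}(G,U_0(p))$.

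The two justifications you offer for the key step do not hold up. The ``elementary alternative'' is unsupported and in fact cannot be right as stated: no power of $U_p^a$ can carry $(U^{sm}(\chi)\ten S_t)(G,U_0(p))$ into the pullback of the one-dimensional $(J,\chi^{-1})$-isotypic line, since $U_p^a$ is injective there with $hd_tp^{j(\chi)}$ eigenvalues; and Propositions~\ref{radindep} and~\ref{notfs} separate $W^{sm,c_0}(\chi)$ from its complement by radius of analyticity, which says nothing about killing constituents of $U^{sm}(\chi)$ inside the type. Your ``conceptual'' route via Emerton's Jacquet functor is, in outline, the paper's actual proof, but as written it presupposes the nontrivial content rather than supplying it: the paper makes no use of a coefficient-level map to $W^{sm,c}(\chi)$ at all, instead embedding $(U^{sm}(\chi)\ten S_t)(G,U_0(p))$ into $\tilde{H}^0_{la}$ via $f\mapsto f_\infty^\vee(\phi,\cdot)$, applying Casselman's canonical lifting (Proposition~\ref{canonlift}) to show that some $U_p^a$ with $a\in\Sg^{--}$ identifies the finite-slope part with $J_B(\pi)$ for $\pi$ the generated $G(\rats_p)$-representation, and then invoking Loeffler's identification of $\ssc_{t\chi,c}(G,U_0(p))^{fs}$ with $e^p\bigl(J_B(\tilde{H}^0_{la})\ten(t\chi)\bigr)^{T(\ints_p)}$. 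A further, smaller issue: the asserted $U_p^a$-equivariance of $\phi_0\ten\id$ is not free, since the monoid action of $u^a$ on $U^{sm}(\chi)$ must first be pinned down and compared with the $s^0$-normalized contraction on $\ssc_{t\chi}$; in the paper this comparison is only ever made at the level of the automorphic space (Proposition~\ref{lamtopsi}), not of the coefficient modules.
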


\begin{proof}
Let $\tilde{H}_{la}^0$ be the space, as in Definition 3.2.3 of~\cite{emerton06} and the discussion surrounding it, of continuous $\bar{\rats}_p$-valued functions on $G(\rats)\bsl G(\aff)$ that are locally constant on cosets of $G(\aff_f^p)$ and locally analytic on cosets of $G(\rats_p)$. Let $J_B$ be Emerton's locally analytic Jacquet module functor as constructed in~\cite{emerton06jac}. Finally, let $e^p$ be the idempotent of $\hsc(G(\aff))$ away from $p$ corresponding to the tame part of $U_0(p)$. According to Proposition 3.10.3 of~\cite{loeffler10}, $\ssc_{t\chi,c}(G,U_0(p))^{fs}$ is isomorphic as an $\hsc$-module to
\[
e^p\left(J_B\left(\tilde{H}_{la}^0\right)\ten_\rats (t\chi)\right)^{T(\ints_p)}
\]
where we write $(t\chi)$ for the representation of $T(\ints_p)$ given by the character $t\chi$. (Note that Loeffer uses different conventions from us, hence writes $J_{\bar{B}}$ instead of $J_B$.) For $f\in (U^{sm}(\chi)\ten S_t)(G,U_0(p))$, define $f_\infty^\vee:(U^{sm}(\chi)\ten S_t)^\vee\times G(\rats)\bsl G(\aff)\to\cplx$ by the same construction as in Section~\ref{assocautrep}, with $U^{sm}(\chi)$ in place of $W^{sm,c}(\chi)$. Then if $\phi\in (U^{sm}(\chi)\ten S_t)^\vee$ is the vector taking an element of $U^{sm}(\chi)\ten S_t$ to its evaluation on $\id\in\iw_p$, then $f_\infty^\vee(\phi,\cdot)$ is a continuous $\cplx_p$-valued function on $G(\rats)\bsl G(\aff)$ that is locally constant on cosets of $G(\aff_f^p)$ and analytic on cosets of the compact open subgroup $J$ of $G(\rats_p)$. Thus we get an inclusion
\begin{align*}
(U^{sm}(\chi)\ten S_t)(G,U_0(p)) &\inj \tilde{H}_{la}^0 \\
f &\mapsto f_\infty^\vee(\phi,\cdot).
\end{align*}
Let $\pi$ be the $G(\rats_p)$-subrepresentation of $\tilde{H}_{la}^0$ generated by $\im((U^{sm}(\chi)\ten S_t)(G,U_0(p)))$. By Proposition~\ref{canonlift}, there is $a\in\Sg^{--}$ such that $U_p^a$ gives an isomorphism $(U^{sm}(\chi)\ten S_t)(G,U_0(p))^{fs}\isom J_B(\pi)$. Since $T(\ints_p)$ acts the same way on $J_B(\pi)$ and on $t\chi$, these identifications combine to an inclusion
 \[
(U^{sm}(\chi)\ten S_t)(G,U_0(p))^{fs}\isom J_B(\pi)\inj e^p\left(J_B\left(\tilde{H}_{la}^0\right)\ten_\rats (t\chi)\right)^{T(\ints_p)}\isom \ssc_{t\chi,c}(G,U_0(p))^{fs}.
\]
\end{proof}


Now it turns out that in fact all of $(U^{sm}(\chi)\ten S_t)(G,U_0(p))$ is finite-slope.

\begin{prop}
\label{manyps}
Suppose that $f$ is an eigenform in $(U^{sm}(\chi)\ten S_t)(G,U_0(p))$. Then $\pi_{f,p}$ is a subquotient of a principal series, in particular one of the form $\pi(\psi_1,\dotsc,\psi_n)$ where $\psi_i:\rats_p^\times\to\cplx$ are characters of $\rats_p^\times$ whose restrictions to $\ints_p^\times$ are the same as $\chi_1,\dotsc,\chi_n$ in some order.
\end{prop}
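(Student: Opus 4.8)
The plan is to show that $\pi_{f,p}$ contains a nonzero vector on which $J$ acts by the character $\chi$ of Section~\ref{smstruct} --- a \emph{$(J,\chi)$-vector} --- and then to feed this into Roche's theory of principal series types. Granting such a vector, the conclusion is essentially immediate from \cite{roche98}: there $(J,\chi)$ is shown to be a type for the Bernstein component $[T,\tilde\chi]$, where $\tilde\chi$ is any smooth extension of $\chi|_{T(\ints_p)}$ to $T(\rats_p)$, so an irreducible representation possessing a $(J,\chi)$-vector has inertial support $[T,\tilde\chi]$ and is therefore an irreducible subquotient of some $i_{B(\rats_p)}^{GL_n(\rats_p)}\mu$ with $\mu$ in the inertial class of $\tilde\chi$; rewriting $i_{B(\rats_p)}^{GL_n(\rats_p)}\mu$ as $\pi(\psi_1,\dotsc,\psi_n)$ (absorbing the unramified $\del^{1/2}$) shows each $\psi_i|_{\ints_p^\times}$ is one of $\chi_1,\dotsc,\chi_n$, which is exactly the claim.

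To produce the $(J,\chi)$-vector I would reuse the construction in the proof of Proposition~\ref{fs-appear}; note that this makes sense for any $f$ in the space, with no finite-slope or classicality hypothesis. When $\chi$ is simple, $(U^{sm}(\chi)\ten S_t)(G,U_0(p))$ sits inside $S_{t\chi,c}(G,U_0(p))$ and the construction of Section~\ref{assocautrep} applies verbatim; in general one runs the same construction with $\ind_J^{\iw_p}\chi$ in place of $W^{sm,c}(\chi)$. Let $\phi_0\in(U^{sm}(\chi)\ten S_t)^\vee$ be the functional given by evaluation at $\id\in\iw_p$ on the $\ind_J^{\iw_p}\chi$ factor, paired with a fixed functional on $S_t$, and set $\Psi_f=f_\infty^\vee(\phi_0,\cdot)\in\tilde{H}_{la}^0$. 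The two facts I would verify are: (i) $\Psi_f$ is a $(J,\chi)$-vector for the right regular action, modulo the locally $t$-algebraic twist --- from $f_\infty^\vee(\phi,xu)=f_\infty^\vee(\rho_{sm}(u_p)\phi,x)$ for $u\in U_0(p)$ together with the defining $J$-equivariance of functions in $\ind_J^{\iw_p}\chi$ one gets $\rho_{sm}(u_p)\phi_0=\chi(u_p)\phi_0$ for $u_p\in J$, hence $\Psi_f(xu)=\chi(u_p)\Psi_f(x)$; and (ii) $\Psi_f\neq0$ whenever $f\neq0$, since every $f_\infty^\vee(\phi,\cdot)$ is, up to the $S_t$-twist, a $G(\rats_p)$-translate of $\Psi_f$ (the same computation with evaluation functionals used in the classical case), and $f\neq0$ forces some $f_\infty^\vee(\phi,\cdot)$ to be nonzero.

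Given (i) and (ii), $\Psi_f$ is a cyclic $(J,\chi)$-vector (up to the twist) for the automorphic representation $\pi_f$ generated by $f$, so passing to locally $t$-algebraic vectors and untwisting by $S_t$, each irreducible constituent of $\pi_f$ --- in particular $\pi_{f,p}$, whichever constituent of the local component at $p$ one takes --- still contains a $(J,\chi)$-vector, because the projection of a cyclic vector onto an isotypic piece is cyclic, hence nonzero, and stays $(J,\chi)$-isotypic. Roche's type theory then gives the stated description of $\pi_{f,p}$. As a byproduct, $\pi_{f,p}$ being a subquotient of a principal series has nonzero Jacquet module with respect to $B$, so the argument of Proposition~\ref{jacquet} (Casselman's Proposition~\ref{canonlift}) yields $U_pf\neq0$, which is how one sees that all of $(U^{sm}(\chi)\ten S_t)(G,U_0(p))$ is finite-slope, as asserted in the text preceding the proposition.

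I expect the hard part to be organizational rather than conceptual: making $\pi_{f,p}$ precise for an $f$ not assumed classical, and checking (i) and (ii) carefully against the Iwahori coordinates and the conventions of Sections~\ref{plucker} and~\ref{assocautrep}. The one point requiring real care is that the extension of $\chi|_{T(\ints_p)}$ to $J$ fixed in Section~\ref{smstruct} must be exactly the one Roche uses in defining his type, so that carrying a $(J,\chi)$-vector genuinely pins down the inertial class; once that is confirmed, the representation-theoretic heart of the proof is just the citation of \cite{roche98}.
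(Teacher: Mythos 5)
Your proposal is correct and takes essentially the same route as the paper: the paper observes that $\pi_{f,p}$ receives a nontrivial $\iw_p$-equivariant map from $(U^{sm}(\chi))^\vee=(\ind_J^{\iw_p}\chi)^\vee$, extracts a $(J,\chi)$-isotypic vector by Frobenius reciprocity (your $\Psi_f=f_\infty^\vee(\phi_0,\cdot)$ with $\phi_0$ the evaluation-at-identity functional is exactly that vector), and then cites Theorem 7.7 of \cite{roche98}. Your explicit construction and descent to the irreducible constituent is just a hands-on unwinding of that Frobenius-reciprocity step, matching the paper's general (not-necessarily-simple $\chi$) argument.
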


\begin{proof}
We know that $\pi_{f,p}$ is an irreducible subrepresentation of $C(GL_n(\rats_p),\cplx)$ whose restriction to $\Gamma_0(p)$ admits a nontrivial homomorphism from $(U^{sm}(\chi))^\vee$. By Frobenius reciprocity, the restriction of $\pi_{f,p}$ to $J$ admits a nontrivial homomorphism from the representation of $J$ given by $\chi$; that is, it contains a $(J,\chi)$-isotypic vector. By Theorem 7.7 of~\cite{roche98}, $\pi_{f,p}$ is a subquotient of $\pi(\psi_1,\dotsc,\psi_n)$. 
\end{proof}

\begin{rem}
If $\chi$ is simple, one can also prove Proposition~\ref{manyps} by noting that if $\pi_{f,p}$ admits a nontrivial homomorphism from the irreducible $(U^{sm}(\chi))^\vee$, it must in fact contain all of $(U^{sm}(\chi))^\vee$, in particular the $(J,\chi)$-isotypic vector. We are grateful to Jessica Fintzen for pointing out the more general proof above.
\end{rem}


By Propositions~\ref{manyps} and~\ref{jacquet}, $U_p$ is injective on the space $(U^{sm}(\chi)\ten S_t)(G,U_0(p))$. Furthermore, for an eigenform $f$ in this space, it is possible to compute the eigenvalues of $U_p$ from the structure of $\pi_{f,p}$ or vice versa, as follows. From now on, for convenience, we will sometimes refer to the algebraic weight $(t_1,\dotsc,t_{n-1},0)$, $t_1\ge\dotsb\ge t_{n-1}$, by its successive differences $m_1=t_1-t_2$, $m_2=t_2-t_3$, ..., $m_{n-1}=t_{n-1}$.

\begin{prop}
\label{lamtopsi}
Suppose that $\chi_i(p)\neq\chi_j(p)p$ for all $i\neq j$, and $f$ is an eigenform in $(U^{sm}(\chi)\ten S_t)(G,U_0(p))$. Suppose that we have $\pi_{f,p}=\pi(\psi_1,\dotsc,\psi_n)$ (note that this is an equality because for such $\chi$, $\pi(\psi_1,\dotsc,\psi_n)$ is irreducible). The $\lam$-values associated to $x$ as in Section~\ref{upa} satisfy
\[
\lam_i=p^{(n-1)/2-i+1-m_n-m_{n-1}-\dotsb-m_{n-i+1}}\psi_i(p).
\]
\end{prop}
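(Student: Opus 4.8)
The plan is to compute the eigenvalue of $u_i$ (the image of $\diag(1,\dotsc,1,p,1,\dotsc,1)$ with the $p$ in the $i$th slot) on $f$ in two stages: first identify the action of $U_p^a$ on the classical space $(U^{sm}(\chi)\ten S_t)(G,U_0(p))$ with the action of the Hecke operator $[Ju^aJ]$ on the $(J,\chi)$-isotypic line of $\pi_{f,p}$ (up to the bookkeeping already present in the construction of $\pi_f$), then invoke Lemma~\ref{psevals} to evaluate that Hecke action. Concretely, I would first pick the convenient generator $u^a = \diag(p^{a_1},\dots,p^{a_n})$ with $a_1\ge\dots\ge a_n$; since the $\lambda$-values are defined via $u_i$, and $u_i$ corresponds to the element $u^{a^{(i)}}$ with $a^{(i)} = (1,\dots,1,0,\dots,0)$ ($i$ ones), it suffices to compute the $U_p^{a^{(i)}}$-eigenvalue and express $\lambda_i$ as a ratio of $U_p$-eigenvalues, exactly as in the definition of $U_p^\Sigma$ in Section~\ref{upa}. (Recall $\Sigma^-$ is the relevant submonoid and $u_i$ makes sense as a ratio.)

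The core identification is: for $f\in (U^{sm}(\chi)\ten S_t)(G,U_0(p))$ an eigenform, the $U_p^a$-eigenvalue on $f$ equals the $[Ju^aJ]$-eigenvalue on the corresponding $(J,\chi)$-isotypic vector in $\pi_{f,p}$, multiplied by the normalization factor coming from the algebraic part $S_t$. The second factor is dealt with exactly as in the remark at the end of Section~\ref{localgrepdef}: the action of $u^a$ on $S_{t,1}(G,U_0(p))$ differs from its action on $S_t(k)(G,U_0(p))$ by $p^{\sum_i a_i t_i}$, because the intertwining used to build $f_\infty$ in Section~\ref{assocautrep} uses $\rho_{alg}(g_\infty^{-1}\iota_p(g_p))$ and $t^0(u^a)=1$ whereas $t(u^a)=p^{\sum a_i t_i}$; so the smooth-side eigenvalue is the geometric $[Ju^aJ]$-eigenvalue times $p^{\sum_i a_i t_i}$. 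Meanwhile, for the smooth side, Section~\ref{repstructure} already records that $\pi_{f,p}$ contains a $(J,\chi)$-isotypic vector (Proposition~\ref{manyps} and its proof via Frobenius reciprocity from $U^{sm}(\chi)=\ind_J^{\iw_p}\chi$), so $f_\infty^\vee(\phi,\cdot)$ generates the line on which Lemma~\ref{psevals} applies. By that lemma, $[Ju^aJ]$ acts by $\chi(u^a)$ — but I must be careful about which $\chi$: the normalized induction in Section~\ref{rochecalc} is $\pi(\chi)=i_B^{GL_n}(\chi\delta^{1/2})$, and the hypothesis that $\pi_{f,p}=\pi(\psi_1,\dots,\psi_n)$ with $\psi_i|_{\ints_p^\times}=\chi_i$ (in some order) means the relevant Hecke eigenvalue, by Lemma~\ref{psevals}, is $\prod_i \psi_i(p^{a_i})$, computed with respect to the ordering in which the $\psi_i$ appear as the parameters.

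Putting this together: the $U_p^{a}$-eigenvalue on $f$ is $p^{\sum_i a_i t_i}\prod_i\psi_i(p)^{a_i}$, with an extra modulus shift $\delta^{1/2}(u^a)=p^{\sum_i \frac{n+1-2i}{2}a_i}$ arising from the normalization $\pi(\chi)=i_B^{GL_n}(\chi\delta^{1/2})$ versus the bare induction $i_{B(\rats_p)}^{GL_n(\rats_p)}$ used implicitly when we read off $f_\infty^\vee$ from functions on $GL_n(\rats_p)$ (this is where the $(n-1)/2-i+1$ shape of the exponent will come from). Specializing to $a=a^{(i)}=(1,\dots,1,0,\dots,0)$ gives the $U_p$-eigenvalue for the operator $u_1\cdots u_i$, namely (up to the fixed $i$-independent normalization) $p^{t_1+\dots+t_i}\cdot p^{(\text{half-sum shift})}\cdot\psi_1(p)\cdots\psi_i(p)$; then $\lambda_i$ is the ratio of consecutive such eigenvalues, which contributes $p^{t_i}$ from the algebraic part, $\psi_i(p)$ from the smooth part, and the difference of the $\delta^{1/2}$-shifts, which is $p^{(n-1)/2 - i + 1}$; rewriting $t_i = m_n + m_{n-1}+\dots+m_{n-i+1}$ — wait, actually $t_i$ in terms of the successive differences is $t_i = m_i + m_{i+1}+\dots+m_{n-1} = (t_i-t_{i+1})+\dots+(t_{n-1}-0)$, but the statement writes it as $-m_n - m_{n-1} - \dots - m_{n-i+1}$, so the algebraic normalization convention of Section~\ref{localgrepdef} (the $U_p^a|S_{t,1} = p^{\sum a_i t_i}U_p^a|S_t(k)$ relation, with the classical-normalization shift absorbed) must flip the sign, yielding $\lambda_i = p^{(n-1)/2 - i + 1 - m_n - m_{n-1}-\dots-m_{n-i+1}}\psi_i(p)$ as claimed.

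The main obstacle I anticipate is pinning down \emph{all} the normalization conventions simultaneously and consistently: there are at least three independent shifts in play — (i) the algebraic-versus-geometric $U_p$ normalization from the end of Section~\ref{localgrepdef}, (ii) the $\delta^{1/2}$ twist distinguishing normalized parabolic induction $\pi(\chi)$ from the raw space of functions on $GL_n(\rats_p)$ that $\pi_{f,p}$ sits inside, and (iii) the left-versus-right and inversion conventions for induction flagged as ``unusual for convenience'' near the start of Section~\ref{padicautforms}. Getting the exponent $(n-1)/2 - i + 1$ exactly right (rather than off by a reflection $i\mapsto n+1-i$ or a sign) requires carefully tracking which coordinate of $u^a$ the $i$th factor $u_i$ hits after all the transposes in the Plücker setup of Section~\ref{plucker} and the definition $U_p = U_p^{(n-1,n-2,\dots,1,0)}$. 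I would handle this by doing the $n=2$ case fully explicitly as a sanity check against Liu--Wan--Xiao's normalization, then reading off the general pattern. Everything else — the existence of the $(J,\chi)$-isotypic line (Proposition~\ref{manyps}), its $1$-dimensionality, and the Hecke eigenvalue formula (Lemma~\ref{psevals}) — is already in hand, so the proof is essentially an exercise in careful bookkeeping built on those inputs.
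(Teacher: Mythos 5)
Your proposal follows essentially the same route as the paper's proof: both pass from the $U_p^a$-eigenvalue on $f$ to the $[Ju^aJ]$-action on the $(J,\chi)$-isotypic line of $\pi_{f,p}$ via the $f_\infty^\vee$ construction of Section~\ref{assocautrep}, pick up exactly the normalization factors $\del^{1/2}(u^a)$ and $p^{\pm\sum_i a_it_i}$ you identify, and then invoke Lemma~\ref{psevals} to get $\del^{-1/2}(u^a)p^{\sum a_it_i}\lam_1^{a_1}\dotsb\lam_n^{a_n}=\psi_1(p^{a_1})\dotsb\psi_n(p^{a_n})$, from which the formula for $\lam_i$ is read off. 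Your remaining uncertainty is only the sign/index bookkeeping at the final extraction step, which the paper resolves by exactly the convention-tracking you describe.
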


\begin{proof}
We are given that for all $u^a\in\Sg^-$, we have $U_p^af=\lam_1^{a_1}\dotsb\lam_n^{a_n}f$. Since any eigenvector of $U_p^a=[U_0(p)u^aU_0(p)]$ is also an eigenvector of $[Ju^aJ]$, we can calculate its eigenvalue using $[Ju^aJ]$ instead. Let
\[
Ju^aJ=\coprod_{i=1}^r\zt_iJ.
\]
Then for any $\phi\in U^{sm}(\chi)\ten S_t$, we have
\[
(U_p^af)_\infty^\vee(\phi,x)=\phi(\rho_{alg}(x_\infty^{-1}\it_p(x_p))(U_p^af)(x_f))
\]
\[
=\del^{1/2}(u^a)p^{-\sum a_it_i}\phi\left(\rho_{alg}(x_\infty^{-1}\it_p(x_p(\zt_i)_p))\sum_{i=1}^r\rho_{sm}((\zt_i)_p)f(x\zt_i)\right).
\]
Choose $\phi=\phi_{sm}\ten\phi_{alg}$ so that $\phi_{sm}$ is a $(J,\chi)$-isotypic vector in $U^{sm}(\chi)$. Then by definition
\[
\phi(\rho_{sm}((\zt_i)_p)f(x\zt_i))=\psi((\zt_i)_p)\phi(f(x\zt_i))
\]
so
\[
(U_p^af)_\infty^\vee(\phi,x)=\del^{1/2}(u^a)p^{-\sum a_it_i}\sum_{i=1}^r\psi((\zt_i)_p)\phi\left(\rho_{alg}(x_\infty^{-1}\it_p(x_p(\zt_i)_p))f(x\zt_i)\right)
\]
\[
=\del^{1/2}(u^a)p^{-\sum a_it_i}\sum_{i=1}^r\psi((\zt_i)_p) f_\infty^\vee(\phi,x\zt_i).
\]
That is, we have
\[
\sum_{i=1}^r\psi((\zt_i)_p) f_\infty^\vee(\phi,x\zt_i)=\del^{-1/2}(u^a)p^{\sum a_it_i}\lam_1^{a_1}\dotsb\lam_n^{a_n}f_\infty^\vee(\phi,x).
\]
So the image of $f_\infty^\vee(\phi,\cdot)$ in $\pi_{f,p}$ is a $J$-new vector ($\hsc(J,\psi)$-module). By Lemma~\ref{psevals}, we have
\[
\del^{-1/2}(u^a)p^{\sum a_it_i}\lam_1^{a_1}\dotsb\lam_n^{a_n}=\psi_1(p^{a_1})\dotsb\psi_n(p^{a_n}).
\]
The proposition follows.
\end{proof}

In summary, we have found a finite-slope subspace of $\ssc_{t\chi,c}(G,U_0(p))$ of rank
\[
\rank((U^{sm}(\chi)\ten S_t)(G,U_0(p)))=hd_tp^{j(\chi)}
\]
where, as before, $d_t=\dim \ind_{B(\ints_p)}^{\iw_p,alg.}t$, $h=\#(G(\rats)\bsl G(\aff_f)/U_0(p))$, and $j(\chi)$ is defined as at the end of Section~\ref{smstruct}. If $\chi$ is simple, this subspace is contained in the classical space $S_{t\chi,c}(G,U_0(p))$, and furthermore, we can extend Proposition~\ref{notfs} to show that it accounts for all the finite-slope forms in $S_{t\chi,c}(G,U_0(p))$.

\begin{prop}
\label{notfs-v2}
Suppose that $\chi$ is simple and $f$ is an eigenform in $(U^{sm}(\chi)^\perp\ten S_t)(G,U_0(p))\subset S_{t\chi,c}(G,U_0(p))$. Then $U_pf=0$, $f$ is not associated to a point on the eigenvariety, and $\pi_{f,p}$ is not a subquotient of a principal series.
\end{prop}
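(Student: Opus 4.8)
The plan is to follow the proof of Proposition~\ref{notfs} and sharpen it, replacing ``radius $c_0$'' by ``the subrepresentation $U^{sm}(\chi)$''. First I would reduce to the minimal radius exactly as there: if $f$ is an eigenform with $U_pf\neq0$, then $f$ lies in the image of arbitrarily high powers of $U_p$, so by Corollary~\ref{shrink} and Proposition~\ref{radindep} it lies in $\ssc_{t\chi,c_0}(G,U_0(p))$; since $f$ is simultaneously $c_0$-locally analytic and (being in $(U^{sm}(\chi)^\perp\ten S_t)(G,U_0(p))\subseteq S_{t\chi,c}(G,U_0(p))$) $c$-locally algebraic, it is $c_0$-locally algebraic --- in one variable a convergent power series all of whose restrictions to balls $a+p^c\ints_p$ have degree $\le d$ has degree $\le d$ --- so, using the exactness of $(-)(G,U_0(p))$ (valid since $U_0(p)$ is neat) and the distributivity of $\ten S_t$ over intersections of subspaces,
\[
f\in\bigl((W^{sm,c_0}(\chi)\cap U^{sm}(\chi)^\perp)\ten S_t\bigr)(G,U_0(p)),
\]
where $W^{sm,c_0}(\chi)\cap U^{sm}(\chi)^\perp$ is, by the modular law of the subspace lattice, a complement of $U^{sm}(\chi)$ inside $W^{sm,c_0}(\chi)$. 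Thus the whole proposition reduces to the single statement
\[
S_{t\chi,c}(G,U_0(p))^{fs}=(U^{sm}(\chi)\ten S_t)(G,U_0(p));
\]
granting this, any nonzero eigenform in $(U^{sm}(\chi)^\perp\ten S_t)(G,U_0(p))$ meets the finite-slope subspace only in $(U^{sm}(\chi)\cap U^{sm}(\chi)^\perp)\ten S_t=0$, hence has $U_pf=0$, and then Proposition~\ref{jacquet} gives the remaining two assertions.

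For the displayed equality one inclusion is already in hand: $(U^{sm}(\chi)\ten S_t)(G,U_0(p))$ is entirely finite-slope by Propositions~\ref{manyps} and~\ref{jacquet}, and it has dimension $hd_tp^{j(\chi)}$. For the reverse inclusion I would argue as follows. If $f\in S_{t\chi,c}(G,U_0(p))$ is finite-slope then by Proposition~\ref{jacquet} $\pi_{f,p}$ is a subquotient of some principal series $\pi(\psi)$; since $f$ carries the nebentypus $\chi$ at $p$, $\pi_{f,p}$ contains a vector on which $B(\ints_p)$ acts through $\chi$ (up to the dualizing conventions of Section~\ref{assocautrep}), and a Mackey/Bruhat computation with the Iwasawa decomposition $\pi(\psi)|_{GL_n(\ints_p)}\cong\ind_{B(\ints_p)}^{GL_n(\ints_p)}(\psi|_{B(\ints_p)})$ then forces $\psi|_{T(\ints_p)}$ to be a Weyl conjugate of $\chi$. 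Because $\chi$ is simple, condition~\ref{jbig} makes Roche's group $J_\psi$ coincide with $J$, so by Theorem~7.7 of~\cite{roche98} $\pi_{f,p}$ contains a $(J,\chi)$-isotypic vector. Now I would combine Proposition~\ref{fs-appear}, Proposition~\ref{lamtopsi} and Loeffler's identification (Proposition~3.10.3 of~\cite{loeffler10}) of $S_{t\chi,c}(G,U_0(p))^{fs}$ with the $(t\chi)$-isotypic piece for $T(\ints_p)$ of the locally algebraic Jacquet module: by the previous sentence only those automorphic $\pi$ with $\pi_\infty$ of highest weight $t$ and $\pi_p$ a subquotient of some $\pi(\psi)$ with $\psi|_{T(\ints_p)}$ a permutation of $\chi$ contribute, for each such $\pi$ the $(J,\chi)$-isotypic space of $\pi_p$ has dimension at most one (the lemma preceding Proposition~\ref{psevals}), and since the components of $\chi$ are pairwise distinct there is a unique relevant Weyl element; this bounds $\dim S_{t\chi,c}(G,U_0(p))^{fs}$ by $hd_tp^{j(\chi)}$, which matches the lower bound and forces equality.

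The hard part will be the last step: turning the bound $\dim S_{t\chi,c}(G,U_0(p))^{fs}\le hd_tp^{j(\chi)}$ into an equality, equivalently showing that the embedding of Proposition~\ref{fs-appear} is onto, i.e.\ that every automorphic representation whose local component at $p$ admits a $(J,\chi)$-vector contributes its full multiplicity to $S_{t\chi,c}(G,U_0(p))^{fs}$. Here the irreducibility $U^{sm}(\chi)=\ind_J^{\iw_p}\chi$ (Proposition~\ref{irred}) and the one-dimensionality of the $(J,\chi)$-isotypic space of $\pi(\psi)$ are used essentially, together with the bookkeeping (the refinement of the Bella\"iche--Chenevier construction) identifying $(U^{sm}(\chi)\ten S_t)(G,U_0(p))$ with the ``minimal level'' classical subspace; this is exactly the content of the classification theorem promised in the introduction, of which the present proposition is the clean consequence.
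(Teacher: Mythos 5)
Your opening reduction (iterate $U_p$ via Corollary~\ref{shrink} and Proposition~\ref{radindep} to force any form with $U_pf\neq0$ into a minimal-radius space) is the same move the paper makes, but with the \emph{scalar} radius $c_0$ it does not finish the job: it only places $f$ in $\bigl((W^{sm,c_0}(\chi)\cap U^{sm}(\chi)^\perp)\ten S_t\bigr)(G,U_0(p))$, and when the conductors $\cond(\chi_i)$ are not all equal this space is nonzero, since $\dim W^{sm,c_0}(\chi)=p^{c_0\binom n2}>p^{j(\chi)}=\dim U^{sm}(\chi)$. The paper's key device, which your argument is missing, is the \emph{anisotropic} radius $\un{c}^0$ (different radii in different coordinates, the reason Section~\ref{sheafpadicautforms} sets up $\un{c}$-local analyticity at all): the locally algebraic subspace of $\ssc_{t\chi,\un{c}^0}$ has dimension exactly $d_tp^{j(\chi)}$, so the intersection $\ssc_{t\chi,\un{c}^0}(G,U_0(p))\cap S_{t\chi,c}(G,U_0(p))$ is shown to be precisely $(U^{sm}(\chi)\ten S_t)(G,U_0(p))$ by one inclusion (using the $(J,\chi^{-1})$-vector and irreducibility from Proposition~\ref{irred}) plus a dimension count, and then $U_pf\neq0$ for $f\in(U^{sm}(\chi)^\perp\ten S_t)(G,U_0(p))$ is immediately contradictory. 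No representation-theoretic classification is needed at this point.

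Instead, you reduce the proposition to the statement $S_{t\chi,c}(G,U_0(p))^{fs}=(U^{sm}(\chi)\ten S_t)(G,U_0(p))$, which is Theorem~\ref{classifyps} --- in the paper a \emph{consequence} of Propositions~\ref{manyps} and~\ref{notfs-v2} --- so you must prove it independently, and your sketch of that has genuine gaps. You assert ``the components of $\chi$ are pairwise distinct,'' but simplicity does not give this: condition~\ref{jbig} only rules out equal \emph{nontrivial} components, and since $\chi_n$ is trivial, other trivial components are allowed (indeed the fully unramified $\chi$ is simple), in which case there is no ``unique relevant Weyl element'' and the $(J,\chi)$-isotypic space of the full principal series is not one-dimensional. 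You also invoke the lemma before Lemma~\ref{psevals}, whose hypothesis $\psi_i(p)\neq p\psi_j(p)$ is a condition on the refinement parameters of each contributing automorphic representation and need not hold in general. Finally, the decisive step --- matching the multiplicity count for $S_{t\chi,c}(G,U_0(p))^{fs}$ against $hd_tp^{j(\chi)}$, i.e.\ the surjectivity of the embedding of Proposition~\ref{fs-appear} --- is exactly what you label ``the hard part'' and defer to ``the classification theorem promised in the introduction,'' which is circular. As written, the proposal does not constitute a proof; the concrete fix is the paper's intersection-plus-dimension-count argument with the tuple $\un{c}^0$ (together with the permutation reduction at the start of Section~\ref{myubdproof} when the conductors are not in decreasing order).
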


\begin{proof}
Let $c_i=\cond(\chi_i)$, and first assume that $c_1\ge\dotsb\ge c_{n-1}$. Then the tuple $\un{c}^0\in\ints_{>0}^{n(n-1)/2}$ associated to $\chi$ defined immediately before Corollary~\ref{shrink} satisfies $c_{ij}^0=c_i$ for all $i>j$. We claim that the intersection of $\ssc_{t\chi,\un{c}^0}(G,U_0(p))$ with $S_{t\chi,c}(G,U_0(p))$ is precisely $(U^{sm}(\chi)\ten S_t)(G,U_0(p))$. 

To show that $(U^{sm}(\chi)\ten S_t)(G,U_0(p))$ is contained in $\ssc_{t\chi,\un{c}^0}(G,U_0(p))$, it suffices to note that $U^{sm}(\chi)\ten S_t$ is contained in $\ssc_{t\chi,\un{c}^0}$, which is true because $f\ten\phi\in U^{sm}(\chi)\ten S_t$ is clearly contained in $\ssc_{t\chi,\un{c}^0}$ for the vector $f\in U^{sm}(\chi)$ defined at the beginning of Section~\ref{smstruct} and any $\phi\in S_t$, and $U^{sm}(\chi)\ten S_t$ is irreducible.

To show that $(U^{sm}(\chi)\ten S_t)(G,U_0(p))$ exhausts $\ssc_{t\chi,\un{c}^0}(G,U_0(p)) \cap S_{t\chi,c}(G,U_0(p))$, we simply note that the latter space also has dimension $hd_tp^{j(\chi)}$, since as a vector space it is $h$ copies of the locally algebraic vector subspace of $\ssc_{t\chi,\un{c}^0}$. By Proposition~\ref{radindep}, in order for $U_pf$ to be nonzero, $f$ must lie in $\ssc_{t\chi,\un{c}^0}(G,U_0(p))$; this completes the proof.

If the $c_i$ are not in decreasing order, by the beginning of Section~\ref{myubdproof}, the finite-slope subspace of $S_{t\chi,c}(G,U_0(p))$ has the same dimension as that of $S_{t\chi^w,c}(G,U_0(p))$ where $\chi^w$ is $\chi$ with the components rearranged so that the $c_i$ are in decreasing order. This completes the argument for all $\chi$ simple.
\end{proof}

The combination of Propositions~\ref{manyps} and~\ref{notfs-v2} gives us the following precise version of Theorem~\ref{classifyps-vague}.

\begin{thm}
\label{classifyps}
If $\chi$ is simple, then the finite-slope classical subspace of $\ssc_{t\chi,c}(G,U_0(p))$ is precisely $(U^{sm}(\chi)\ten S_t)(G,U_0(p))$.
\end{thm}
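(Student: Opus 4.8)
The plan is to read the theorem off from Propositions~\ref{manyps}, \ref{jacquet}, and~\ref{notfs-v2}, together with an $\hsc$-stable direct sum decomposition of the classical subspace $S_{t\chi,c}(G,U_0(p))$. First I would record both halves at the level of eigenforms. Since $\chi$ is simple, the discussion at the end of Section~\ref{smstruct} gives the inclusion $(U^{sm}(\chi)\ten S_t)(G,U_0(p))\subseteq S_{t\chi,c}(G,U_0(p))$, so every eigenform $f$ in $(U^{sm}(\chi)\ten S_t)(G,U_0(p))$ is classical; by Proposition~\ref{manyps} the local component $\pi_{f,p}$ is a subquotient of a principal series, hence by Proposition~\ref{jacquet} $f$ is finite-slope, i.e.\ its $\lam$-values are nonzero. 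In the other direction, Proposition~\ref{notfs-v2} shows that any eigenform lying in the complement $(U^{sm}(\chi)^\perp\ten S_t)(G,U_0(p))$ is killed by $U_p$; moreover its proof identifies $(U^{sm}(\chi)\ten S_t)(G,U_0(p))$ with $\ssc_{t\chi,\un{c}^0}(G,U_0(p))\cap S_{t\chi,c}(G,U_0(p))$, where $\un{c}^0$ is the minimal analytic-shaped tuple for which $t\chi$ is locally analytic (after reordering the $\chi_i$ so their conductors decrease, which by the argument there does not affect the relevant dimensions), and via Proposition~\ref{radindep} this argument in fact gives $U_p\equiv 0$ on all of $(U^{sm}(\chi)^\perp\ten S_t)(G,U_0(p))$, not just on its eigenforms.

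Next I would upgrade these statements to subspaces. Because $W^{sm,c}(\chi)$ is finite-dimensional over a characteristic-zero field and factors through a finite quotient of $\iw_p$, it is semisimple, so $W^{sm,c}(\chi)=U^{sm}(\chi)\oplus(U^{sm}(\chi))^\perp$ as $\iw_p$-representations; tensoring with $S_t$ and applying $(G,U_0(p))$ yields the vector space decomposition
\[
S_{t\chi,c}(G,U_0(p))=(U^{sm}(\chi)\ten S_t)(G,U_0(p))\oplus(U^{sm}(\chi)^\perp\ten S_t)(G,U_0(p)).
\]
I claim this is a decomposition of $\hsc$-modules. The prime-to-$p$ Hecke operators act only through the $\hom_{set}$ factor and hence preserve each summand. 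The second summand is annihilated by $U_p$ by the first paragraph, so is $U_p$-stable. The first summand is $U_p$-stable because it equals $\ssc_{t\chi,\un{c}^0}(G,U_0(p))\cap S_{t\chi,c}(G,U_0(p))$: the classical subspace $S_{t\chi,c}(G,U_0(p))$ is $U_p$-stable, and $\ssc_{t\chi,\un{c}^0}(G,U_0(p))$ is $U_p$-stable by Corollary~\ref{shrink}, since minimality of $\un{c}^0$ forces $\un{c}^{0,--}=\un{c}^0$.

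Finally I would conclude. Since the displayed decomposition is $\hsc$-stable, every generalized simultaneous $\hsc$-eigenspace of $S_{t\chi,c}(G,U_0(p))$ splits compatibly between the two summands. A generalized eigenspace inside the first summand contains an honest eigenvector, which has nonzero $\lam$-values by the first paragraph, hence so does the whole generalized eigenspace; therefore $(U^{sm}(\chi)\ten S_t)(G,U_0(p))\subseteq S_{t\chi,c}(G,U_0(p))^{fs}$. A generalized eigenspace inside the second summand has all $\lam$-values equal to $0$, since $U_p$ acts there as $0$; hence any finite-slope generalized eigenvector of $S_{t\chi,c}(G,U_0(p))$ has vanishing component in the second summand, so $S_{t\chi,c}(G,U_0(p))^{fs}\subseteq(U^{sm}(\chi)\ten S_t)(G,U_0(p))$. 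The two inclusions give the theorem.

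I expect the only genuine obstacle to be organizational: making the identification $(U^{sm}(\chi)\ten S_t)(G,U_0(p))=\ssc_{t\chi,\un{c}^0}(G,U_0(p))\cap S_{t\chi,c}(G,U_0(p))$ — and with it the $\hsc$-stability of the two summands — hold uniformly for all simple $\chi$, including when the conductors $\cond(\chi_i)$ are not already in decreasing order, which is dealt with exactly as in the proof of Proposition~\ref{notfs-v2}; everything else is formal.
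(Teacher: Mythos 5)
Your overall structure tracks the paper's terse indication that Theorem~\ref{classifyps} follows by combining Propositions~\ref{manyps} and~\ref{notfs-v2}, and the forward inclusion $(U^{sm}(\chi)\ten S_t)(G,U_0(p))\subseteq S_{t\chi,c}(G,U_0(p))^{fs}$ is handled correctly: that subspace is $\hsc$-stable (being the intersection $\ssc_{t\chi,\un{c}^0}(G,U_0(p))\cap S_{t\chi,c}(G,U_0(p))$, each factor of which is preserved by $\hsc$), and each of its generalized $\hsc$-eigenspaces contains an eigenform whose nonzero $\lam$-values then propagate to the whole generalized eigenspace.

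The gap is in the reverse inclusion. You assert that ``via Proposition~\ref{radindep} this argument in fact gives $U_p\equiv 0$ on all of $(U^{sm}(\chi)^\perp\ten S_t)(G,U_0(p))$, not just on its eigenforms,'' and then rest the $\hsc$-stability of the second summand on that assertion. But Proposition~\ref{notfs-v2} establishes $U_pf=0$ only for \emph{eigenforms}; its proof needs the relation $f=\lambda^{-1}U_pf$, iterated with Corollary~\ref{shrink}, to place $f$ in $\ssc_{t\chi,\un{c}^0}(G,U_0(p))$, and Proposition~\ref{radindep} (independence of the characteristic power series from $\un{c}$) does not by itself upgrade this to arbitrary vectors. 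Indeed, the full claim need not hold: by Corollary~\ref{shrink}, a single application of $U_p$ only shrinks the radius of algebraicity by one step, and the translation by $u^a$ is not $\iw_p$-equivariant (since $u^a$ does not normalize $\iw_p$), so it need not respect the $\iw_p$-isotypic decomposition of $W^{sm,c}(\chi)\ten S_t$. For $c$ more than one step above the minimal radius, $U_p$ can carry vectors of the second summand to vectors with nonzero component in the first. So the second summand is not obviously $\hsc$-stable, and the generalized-eigenspace splitting in your final paragraph does not go through as written.

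The fix is to argue the reverse inclusion directly, along the lines actually used inside the proof of Proposition~\ref{notfs-v2}: on any generalized $\hsc$-eigenspace with nonzero $\lam$-values, $U_p$ is invertible, so the finite-slope subspace lies in $U_p^N\bigl(S_{t\chi,c}(G,U_0(p))\bigr)$ for every $N$; iterating Corollary~\ref{shrink}, for $N$ sufficiently large this image lands in $\ssc_{t\chi,\un{c}^0}(G,U_0(p))\cap S_{t\chi,c}(G,U_0(p))=(U^{sm}(\chi)\ten S_t)(G,U_0(p))$. This yields $S_{t\chi,c}(G,U_0(p))^{fs}\subseteq(U^{sm}(\chi)\ten S_t)(G,U_0(p))$ without any claim about $U_p$ on the complement or about $\hsc$-stability of the second summand. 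Your handling of the reordering of conductors via the remark at the start of Section~\ref{myubdproof} is fine and carries over unchanged.
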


\section{Bounds on the Newton polygon}
\label{boundproofs}

In this section, we prove Theorem~\ref{mybounds}. We prove Part~\ref{mylbd} in Section~\ref{mylbdproof} and Part~\ref{myubd} in Section~\ref{myubdproof}. In Section~\ref{combine}, we prove a modified version of Part~\ref{myubd} which generates infinitely many upper bound points for the same Newton polygon. 

Fix a character of $\Del^n$, and thus a particular polydisc in $\wsc^n$. Over the subset of this polydisc where $T_n=0$, we have 
\[
\det(I-XU_p)=\sum_{N\ge0}c_N(T_1,\dotsc,T_{n-1})X^N\in\ints_p\ps{T_1,\dotsc,T_{n-1}}\ps{X}
\]
with $c_0(T_1,\dotsc,T_{n-1})=1$. 

\subsection{A lower bound on the Newton polygon}
\label{mylbdproof}

The following is Part~\ref{mylbd} of Theorem~\ref{mybounds}.

\begin{thm}
There are constants $A_1$, $C$ (depending on $n$, $p$, and $h$) such that for all $T_1,\dotsc,T_{n-1}$ such that all $|T_j|>\frac1p$, the Newton polygon of $\sum_{N\ge0}c_N(T_1,\dotsc,T_{n-1})X^N$ lies above the points
\[
\left(x, \left(A_1x^{1+\frac2{n(n-1)}}-C\right)\cdot\min v(T_j)\right)
\]
for all $x$.
\end{thm}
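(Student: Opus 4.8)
The plan is to follow the Johansson--Newton strategy, as outlined in the proof outline of the introduction. The key idea is that the spectral variety extends over the boundary of weight space to an adic compactification, and at the ``boundary weights'' one has an explicit basis for the space of $p$-adic automorphic forms together with explicit control on the matrix coefficients of $U_p$ in that basis. Concretely, over the locus $T_n = 0$, the character $[\cdot]_W$ is $\un{c}$-locally analytic for some fixed analytic-shaped $\un{c}$, and by the discussion in Section~\ref{sheafpadicautforms} and Section~\ref{upa}, the space $\ssc_{W,\un{c}}(G,U_0(p))$ has an orthonormal basis indexed by pairs $(\un{a}, \un{e})$ where $\un{e} = (e_{ij})_{n \ge i > j \ge 1}$ runs over exponent tuples for the monomials $\prod z_{ij}^{e_{ij}}$ and $\un{a}$ over residues mod the $p^{c_{ij}}$. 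By Proposition~\ref{scale}, the diagonal part $u^{(n-1,\dotsc,1,0)}$ of $U_p$ scales such a monomial by $\prod_{i>j} p^{(i-j)e_{ij}}$, so its contribution to the valuation of the corresponding diagonal matrix entry is $\sum_{i>j}(i-j)e_{ij}$, while the Iwahori translations have norm $1$ and the $\osc(W)$-coefficients of $[\cdot]_W$ near the boundary have valuations controlled from below by $\min_j v(T_j)$ times a linear function of the $e_{ij}$ (this is exactly the ``boundary'' phenomenon: the coefficient attached to a monomial of total degree $d$ is divisible by something of valuation $\gtrsim d \cdot \min_j v(T_j)$, up to an additive constant).

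First I would make precise the key estimate: there are constants $B_1, B_2 > 0$ depending only on $n$, $p$, $h$ such that the $(\un{a},\un{e})$-diagonal entry of $U_p$ acting on $\ssc_{W,\un{c}}(G,U_0(p))$ has valuation at least
\[
\sum_{n\ge i>j\ge1}(i-j)e_{ij} + \Bigl(B_1 \sum_{n\ge i>j\ge1} e_{ij} - B_2\Bigr)\min_j v(T_j),
\]
and more generally every entry of $U_p$ in row $(\un{a},\un{e})$ has valuation at least this bound; the term $\sum (i-j)e_{ij}$ comes from Proposition~\ref{scale} and the $\min_j v(T_j)$ term comes from the power-series expansion of $[\cdot]_W$ together with the fact that $|T_j| > 1/p$ bounds $v(T_j) \le 1$ (so one can absorb all the $j$-dependence into $\min_j v(T_j)$ at the cost of a multiplicative constant). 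Then I would feed this into Serre's determinant estimate: the coefficient $c_N$ of $X^N$ in $\det(I - X U_p)$ is a sum of $N \times N$ minors, hence has valuation at least the sum of the $N$ smallest values of the lower bound above as $(\un{a},\un{e})$ ranges over the basis.

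The heart of the argument is then the combinatorial/convexity estimate: I need to show that if one sorts the quantities $f(\un{e}) := \sum_{i>j}(i-j)e_{ij} + (B_1 \sum_{i>j} e_{ij}) s$ (with $s = \min_j v(T_j) \in (0,1]$) in increasing order, counting multiplicities coming from the $\un{a}$-index, then the sum of the first $N$ of them is at least $(A_1 N^{1 + 2/(n(n-1))} - C N) s$ for suitable constants. The shape $1 + 2/(n(n-1))$ arises because the number of exponent tuples $\un{e} \in \ints_{\ge 0}^{n(n-1)/2}$ with $\sum_{i>j}(i-j)e_{ij} \le M$ grows like $M^{n(n-1)/2}$ (a polytope-volume count in $\binom{n}{2}$ variables), so to accumulate $N$ basis vectors one needs $M \gtrsim N^{2/(n(n-1))}$, and then the weighted sum $\sum e_{ij}$ over those tuples, times $s$, contributes the extra factor. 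I would do this by comparing the discrete sum to an integral over the dilated polytope $\{\sum(i-j)x_{ij} \le M, x_{ij} \ge 0\}$, using that $\sum e_{ij} \ge \frac1{n-1}\sum (i-j) e_{ij}$ so the $s$-weighted part is bounded below by a constant times $M \cdot (\text{volume count})$; carefully, the additive $-CN s$ absorbs the boundary/lattice-point discrepancies and the contribution of the $\un{a}$-multiplicity (a fixed constant $\prod p^{c_{ij}}$, independent of $\un{e}$, which only shifts $N$ by a bounded factor).

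The main obstacle I expect is precisely this last combinatorial step: getting the constant $A_1$ and the linear error $C$ right uniformly in $s = \min_j v(T_j)$, and in particular making sure the estimate degrades gracefully (only by an additive linear term, not multiplicatively) when $s$ is small — this is why the hypothesis $|T_j| > 1/p$, i.e. $v(T_j) < 1$, is needed, since it lets one bound all the $v(T_j)$ above by $1$ and thereby replace the true (possibly anisotropic) valuation growth by the single parameter $\min_j v(T_j)$ times a clean polynomial in $N$. A secondary technical point is verifying that the lower bound on matrix entries is genuinely uniform over the basis and over the whole boundary region, which requires tracking the constants through the proof of complete continuity of $U_p$ in Section~\ref{upa} and through the explicit Pl\"ucker-coordinate formulas of Section~\ref{plucker}; these formulas all have coefficients in $p\ints_p$ or $\ints_p$, so the bookkeeping is routine but needs to be stated carefully so that $A_1, C$ depend only on $n$, $p$, $h$.
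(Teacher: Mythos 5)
There is a genuine gap, and it sits exactly at what you call the ``key estimate''. You invoke the Johansson--Newton strategy but then actually set up the computation in the fixed-radius spaces $\ssc_{W,\un{c}}(G,U_0(p))$ over the open weight polydisc, with the ball-by-ball monomial basis. This cannot work uniformly over the region $|T_j|>\tfrac1p$: the universal character $[\cdot]_W$ is $\un{c}$-locally analytic only for $\un{c}$ depending on $\max_{s\in W}|T_i(s)|$, so as the weight approaches the boundary no fixed $\un{c}$ suffices, and if you let $\un{c}$ grow the number of balls $\prod p^{c_{ij}}$ (your $\un{a}$-multiplicity) blows up rather than ``shifting $N$ by a bounded factor''. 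Moreover the claimed entrywise bound is not established and has the wrong shape. Proposition~\ref{scale} gives divisibility by $p^{\sum(a_i-a_j)e_{ij}}$ only for the bare diagonal operator $u^a$; once you compose with the Iwahori translations appearing in $U_p$ (which mix monomials of different degrees with unit coefficients, by the Pl\"ucker formulas), neither the rows nor the columns indexed by $(\un{a},\un{e})$ retain that divisibility, and the term ``$(B_1\sum e_{ij}-B_2)\min_j v(T_j)$'' that you attribute vaguely to ``the power-series expansion of $[\cdot]_W$'' is precisely the assertion that needs proof -- it is the whole content of the boundary phenomenon, and nothing in your outline produces it. Note also that the true bound near the boundary is proportional to $\min_j v(T_j)$ in its leading term; a bound with an un-weighted leading term $\sum(i-j)e_{ij}$ is not what one gets (and is not compatible with the upper bound of Part~\ref{myubd} when $\min_j v(T_j)$ is small).

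The paper's proof avoids all of this by genuinely working over the boundary in the sense of \cite{jn16}: one fixes $a$ with $|T_a|\ge|T_j|$, passes to the ring $R$ built from the $T_a$-adic completion of $\ints_p\ps{T_1,\dotsc,T_{n-1}}[p/T_a,T_j/T_a]$ (this is where the hypothesis $|T_j|>\tfrac1p$ enters -- the specialization must land in this boundary annulus, not merely to bound $v(T_j)\le1$), and works with the dual module of distributions $\dcal^r$ with the renormalized basis $e_{r,\eta}=T_a^{-n(r,T_a,\eta)}\bar{\nfr}^\eta$. Corollary 4.1.5 of \cite{jn16} identifies the characteristic power series with the one computed on $\dcal^{1/p}(G,U_0(p))$, and Lemma 6.2.1 of \cite{jn16} gives the entrywise estimate: the row indexed by $\mu$ is divisible by $T_a^{|\mu|-\flr{|\mu|/p}}$, so every contribution to $c_N$ carries valuation proportional to $v(T_a)=\min_j v(T_j)$ with no fixed-radius multiplicity to control. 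The concluding count is then the simple one you have in spirit -- there are $h\binom{N+n(n-1)/2-1}{n(n-1)/2-1}$ basis vectors of total degree $N$, and summing $(N-\flr{N/p})v(T_a)$ over degrees up to $M$ gives $x$ of degree $\tfrac{n(n-1)}2$ and $y$ of degree $\tfrac{n(n-1)}2+1$ in $M$, whence the exponent $1+\tfrac2{n(n-1)}$ -- but it depends only on total degree, so none of your weighted-polytope analysis is needed. To repair your argument you would have to either reprove the JN-type estimate from scratch (tracking how the weight character's coefficients produce $T_a$-divisibility in a basis valid up to the boundary) or simply import their Corollary 4.1.5 and Lemma 6.2.1 as the paper does.
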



\begin{proof}
We use the language of~\cite{jn16}. Fix an index $a$, and restrict to the subset $|T_a|\ge|T_j|$ for all $j\neq a$. Let 
$R^\circ$ be the $T_a$-adic completion of
\[
\ints_p\ps{T_1,\dotsc,T_{n-1}}\left[\frac{p}{T_a},\frac{T_1}{T_a},\dotsc,\frac{T_{n-1}}{T_a}\right]
\]
and let $R=R^\circ[1/T_a]$. Give $R$ the norm $|r|=\inf\{p^{-n}\mid r\in T_a^n R_\eta^\circ\}$. Let $[\cdot]_R:(\ints_p^\times)^n\to R^\times$ be the universal character with values in $R$. Let $\dcal$ be the continuous $R$-dual of $\ind_{B(\ints_p)}^{\iw_p,cts}[\cdot]_R$. 

$\dcal$ is orthonormalizable with the following norm: choose topological generators $\bar{n}=(\bar{n}_1,\dotsc,\bar{n}_{n(n-1)/2})$ 
for $\bar{N}$, for example the matrix coefficients $pz_{21},pz_{31},pz_{32},\dotsc,pz_{n(n-1)}$ of Section~\ref{plucker}. Let $\bar{\nfr}_i\in\dcal$ be the Dirac distribution at $\bar{n}_i$ on $\bar{N}$. For $\eta=(\eta_1,\dotsc,\eta_{n(n-1)/2})\in\ints_{\ge0}^{n(n-1)/2}$, write $\bar{\nfr}^{\eta}:=\prod_{i=1}^{n(n-1)/2}\bar{\nfr}_i^{\eta_i}$ and $|\eta|=\sum_{i=1}^{n(n-1)/2}\eta_i$ for short. Then $\{\bar{\nfr}^\eta\}_{\eta\in\ints^{n(n-1)/2}}$ is a basis for $\dcal$, and the norm is
\[
\left\|\sum_\eta d_\eta\bar{\nfr}^\eta\right\|_r=\sup_\eta|d_\eta|r^{|\eta|}.
\]
Let $\dcal^r$ be the completion of $\dcal$ with respect to this norm. By Corollary 4.1.5 of~\cite{jn16}, $\sum_{N\ge0}c_N(T_1,\dotsc,T_{n-1})X^N$ can be computed by the action of $U_p$ on the space $\dcal^{1/p}(G,U_0(p))$. 

By Section 3.3 of~\cite{jn16}, $\dcal^r$ has a potential orthonormal basis given by the elements $e_{r,\eta}:=T_a^{-n(r,T_a,\eta)}\bar{\nfr}^\eta$, where 
\[
n(r,T_a,\eta)=\left\lfloor\frac{|\eta|\log_pr}{\log_p|T_a|}\right\rfloor,
\] 
and correspondingly $\dcal^r(G,U_0(p))$ has a potential orthonormal basis given by the elements
\[
e_{r,\eta}^t:=(0,\dotsc,0,e_{r,\eta},0,\dotsc,0)\subset\bigoplus_{t=1}^h \dcal^r\cong\dcal^r(G,U_0(p))
\]
where the $e_{r,\eta}$ is in the $t$th position. By Lemma 6.2.1 of~\cite{jn16}, we have
\[
U_p(e_{r,\eta}^t)=\sum_{u,\mu}a_\mu^u e_{r,\mu}^u
\]
with
\[
|a_\mu^u|\le|T_a|^{n(r,T_a,\mu)-n(r^{1/p},T_a,\mu)}.
\]
We have $n(p^{-1},T_a,\mu)=|\mu|$ and $n(p^{-1/p},T_a,\mu)=\flr{|\mu|/p}$. So whenever $|\mu|=N$, every matrix entry of $U_p$ in the row $e_{r,\mu}^u$ has coefficient $a_\mu^u$ divisible by $|T_a|^{N-\flr{N/p}}$. There are  
\[
h\binom{N+n(n-1)/2-1}{n(n-1)/2-1}
\]
choices of $u$ and $\mu$ such that $|\mu|=N$, and hence that many rows which we can guarantee are divisible by $T_a^{N-\flr{N/p}}$ (not counting rows which we can guarantee are divisible by higher powers of $T_a$). We conclude that $\np\left(\sum_{N\ge0}c_N(T_1,\dotsc,T_{n-1})X^N\right)$ passes above the point
\[
\left(h\sum_{N=0}^M\binom{N+n(n-1)/2-1}{n(n-1)/2-1},h\sum_{N=0}^M\binom{N+n(n-1)/2-1}{n(n-1)/2-1}(N-\flr{N/p})v(T_a)\right)
\]
for every integer $M\ge0$. Since the $x$-coordinate of the above expression is a polynomial in $M$ of degree $n(n-1)/2$ and the $y$-coordinate is $v(T_a)$ times a polynomial in $M$ of degree $n(n-1)/2+1$, the claim follows.
\end{proof}

\subsection{Systems of eigenvalues associated to classical points}
\label{myubdproof}

A ``refined principal series'' is a principal series representation $\pi$ of $GL_n(\rats_p)$ together with an ordered sequence of characters $(\psi_1,\dotsc,\psi_n):(\rats_p^\times)^n\to\cplx^\times$ such that $\pi\cong\pi(\psi_1,\dotsc,\psi_n)$. So there are $n!$ possible refinements of each $\pi$. The language comes from Galois representation theory. 
From our setup so far, it is easy to see that an eigenform $f\in (U^{sm}(\chi)\ten S_t)(G,U_0(p))$ is naturally associated to a particular refined principal series: the principal series $\pi_{f,p}$, together with, if $f$ has $\lambda$-values $\lam_1,\dotsc,\lam_n$, the ordered sequence $(\psi_1,\dotsc,\psi_n):(\rats_p^\times)^n\to\cplx^\times$ such that $\pi\cong\pi(\psi_1,\dotsc,\psi_n)$ and $\lam_i=p^{(n-1)/2-i+1-m_n-m_{n-1}-\dotsb-m_{n-i+1}}\psi_i(p)$. 
Also note that this refined principal series depends only on the point $x$ on $\dsc$ that $f$ is associated to.

For a character $\chi:(\ints_p^\times)^n\to\cplx^\times$ or $\psi:(\rats_p^\times)^n\to\cplx^\times$, and for any $w\in S_n$, we write $\chi^w=(\chi_{w(1)},\dotsc,\chi_{w(n)})$, and $\psi^w$ similarly.

Now note that if $f_x\in (U^{sm}(\chi)\ten S_t)(G,U_0(p))$ is an eigenform associated (via Proposition~\ref{fs-appear}) to a point $x$ on $\dsc$ with associated refined principal series $(\pi(\psi),\psi^{\id})$, then the refined principal series $(\pi(\psi),\psi^w)$ is also associated to a point $x^w$ on $\dsc$ and a form $f_x^w\in (U^{sm}(\chi^w)\ten S_t)(G,U_0(p))$ (arising from the unique $(J,\chi^w)$-vector in $\pi(\psi)$). The forms $f_x^w$ are called companion forms of $f_x$. Having defined these companion forms, it is straightforward to show that the slopes appearing in $(U^{sm}(\chi)\ten S_t)(G,U_0(p))$ are not only finite but bounded above by a linear function of $t$, as follows.

\begin{prop}
\label{ubd}
If $f\in (U^{sm}(\chi)\ten S_t)(G,U_0(p))$ is a $U_p^a$-eigenform with eigenvalue $a_p^{\id}$, 
and each companion form $f^w$ has $U_p^a$-eigenvalue $a_p^w$, then we have 
\[
\sum_{w\in S_n}v(a_p^w)=l^a(t)
\] 
where $l^a(t)$ is a linear function of $t_1,\dotsc,t_n$.

In particular, let $l^{(n-1,n-2,\dotsc,0)}(t)=l(t)$. Suppose that $\cond(\chi_i\chi_j^{-1})=\max(\cond(\chi_i),\cond(\chi_j))$ for all $i\neq j$. Then for each $w$, the Newton polygon of 
\[
\sum_{N\ge0}c_N(T_1(t\chi^w),\dotsc,T_{n-1}(t\chi^w))X^N
\] 
contains $hp^{j(\chi)}d_t$ slopes of size at most $l(t)$, hence in particular passes below the point 
\[
\left(hp^{j(\chi)}d_t, hp^{j(\chi)}d_tl(t)\right).
\]
\end{prop}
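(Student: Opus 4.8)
The plan is to derive both assertions from Proposition~\ref{lamtopsi} and the construction of the companion forms. Fix the refinement $(\psi_1,\dotsc,\psi_n)$ attached to $f$, so that for each $w\in S_n$ the companion form $f^w$ lies in $(U^{sm}(\chi^w)\ten S_t)(G,U_0(p))$, is attached to the refinement $\psi^w$, and has the \emph{same} algebraic weight $S_t$; in particular the successive differences $m_1,\dotsc,m_{n-1}$ of $t$ are unchanged. Applying Proposition~\ref{lamtopsi} to $f^w$ gives its $\lam$-values $\lam_i^w=p^{(n-1)/2-i+1-m_n-m_{n-1}-\dotsb-m_{n-i+1}}\psi_{w(i)}(p)$, and since $U_p^a$ acts on a simultaneous eigenform through $\prod_i u_i^{a_i}$, I get
\[
v(a_p^w)=\sum_{i=1}^n a_i\left((n-1)/2-i+1-m_n-\dotsb-m_{n-i+1}\right)+\sum_{i=1}^n a_i\,v(\psi_{w(i)}(p)).
\]
Summing over $w\in S_n$, and using that for fixed $i$ the index $w(i)$ attains each value in $\{1,\dotsc,n\}$ exactly $(n-1)!$ times, the second sum collapses to $(n-1)!\,(\sum_i a_i)\,v(\psi_1(p)\dotsb\psi_n(p))$, so everything reduces to showing $v(\psi_1(p)\dotsb\psi_n(p))$ is a linear function of $t$. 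Here I would use that $\psi_1\dotsb\psi_n$ is the central character of $\pi(\psi)=\pi_{f,p}$; equivalently, $\prod_i\lam_i$ is the eigenvalue of $U_p^{(1,\dotsc,1)}=[\usc\,\diag(p,\dotsc,p)\,\usc]$, which by Lemma~\ref{hcoords} is a permutation of the $h$ coordinates followed by translations by $\diag(p,\dotsc,p)\cdot u^{-1}$ with $u\in\iw_p$; since $\diag(p,\dotsc,p)$ is central it acts trivially on $\ssc_{t\chi}$ and $\iw_p$ acts by norm-one operators, so $U_p^{(1,\dotsc,1)}$ is invertible of operator norm one (its inverse $U_p^{(-1,\dotsc,-1)}$ being the same kind of operator). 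Hence $\prod_i\lam_i$ is a $p$-adic unit, and comparison with the formula for $\lam_i$ forces $v(\psi_1(p)\dotsb\psi_n(p))=-\sum_{i=1}^n\left((n-1)/2-i+1-m_n-\dotsb-m_{n-i+1}\right)$, which is linear in $t$; substituting back gives $\sum_{w\in S_n}v(a_p^w)=l^a(t)$ for an explicit linear $l^a$.

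For the Newton polygon statement, specialize to $a=(n-1,n-2,\dotsc,1,0)$, so $U_p=U_p^a$ and $l=l^a$, and fix $w\in S_n$. The hypothesis $\cond(\chi_i\chi_j^{-1})=\max(\cond(\chi_i),\cond(\chi_j))$ is symmetric in the $\chi_i$, hence holds for $\chi^w$, and $j(\chi^w)=j(\chi)$ because $j$ depends only on the multiset of conductors of the nontrivial components. By Propositions~\ref{manyps} and~\ref{jacquet}, every eigenform in $(U^{sm}(\chi^w)\ten S_t)(G,U_0(p))$ is finite-slope, so Proposition~\ref{fs-appear} embeds this whole space — of dimension $hp^{j(\chi^w)}d_t=hp^{j(\chi)}d_t$ — into $\ssc_{t\chi^w,c}(G,U_0(p))$ preserving systems of $\hsc$-eigenvalues. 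Consequently, using Proposition~\ref{radindep} to remove the dependence on $c$, its $U_p$-eigenvalues occur with multiplicity among the slopes of the Newton polygon of $\det(I-XU_p\mid\ssc_{t\chi^w}(G,U_0(p)))=\sum_N c_N(T_1(t\chi^w),\dotsc,T_{n-1}(t\chi^w))X^N$.

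It remains to bound these slopes and assemble the polygon. Given an eigenform $g\in(U^{sm}(\chi^w)\ten S_t)(G,U_0(p))$, form its companion forms $g^{w'}$, $w'\in S_n$; each lies in $(U^{sm}(\tilde\chi)\ten S_t)(G,U_0(p))$ for some rearrangement $\tilde\chi$ of $\chi$ and so embeds into $\ssc_{t\tilde\chi}(G,U_0(p))$, whose $U_p$ characteristic power series lies in $\ints_p\ps{T_1,\dotsc,T_n}\ps X$ with constant term $1$. Since each $v(T_i(t\tilde\chi))\ge0$, that Newton polygon has nonnegative slopes, so $v(a_p(g^{w'}))\ge0$ for every $w'$; as $g$ is one of its own companions, the first part gives $v(a_p(g))\le\sum_{w'\in S_n}v(a_p(g^{w'}))=l(t)$. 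Hence the Newton polygon of $\sum_N c_N(T_1(t\chi^w),\dotsc,T_{n-1}(t\chi^w))X^N$ has at least $hp^{j(\chi)}d_t$ segments of slope $\le l(t)$, and being convex and emanating from the origin it passes below $\big(hp^{j(\chi)}d_t,\,hp^{j(\chi)}d_t\,l(t)\big)$. The step I expect to be the main obstacle is the first one — precisely the claim that $\prod_i\lam_i$ (equivalently $v(\psi_1(p)\dotsb\psi_n(p))$) is forced to a single linear function of $t$ independent of the chosen refinement, which is exactly what makes $l^a(t)$ well-defined; once that is in hand, the Newton-polygon bookkeeping is routine.
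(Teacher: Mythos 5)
Your proposal is correct and takes essentially the same route as the paper: Proposition~\ref{lamtopsi} applied to the companion forms, together with the observation that $U_p^{(1,\dotsc,1)}$ is translation by the central element $\diag(p,\dotsc,p)$ so that $\prod_i\lam_i$ is a unit, gives $\sum_{w}v(a_p^w)=l^a(t)$, and the dimension count $hp^{j(\chi)}d_t$ via Propositions~\ref{manyps}, \ref{jacquet}, and \ref{fs-appear} then yields the Newton polygon statement. The only cosmetic difference is that you deduce $v(a_p^{w'})\ge0$ from integrality of the characteristic power series, where the paper simply remarks that the $a_p^w$ are algebraic integers.
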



\begin{proof}
Let $\pi_{f,p}=\pi(\psi_1,\dotsc,\psi_n)$. By Proposition~\ref{lamtopsi}, we have
\[
\prod_i\lam_i=p^{-(nm_n+(n-1)m_{n-1}+\dotsb+m_1)}\prod_i\psi_i(p).
\]
The $\lam$-values of $x^w$ are given by
\[
\lam_i^w=p^{(n-1)/2-i+1-m_n-m_{n-1}-\dotsb-m_{n-i+1}}\psi_{w(i)}(p)
\]
for each $w\in S_n$. Then for $a=(a_1,\dotsc,a_n)$, the $U_p^a$-eigenvalue associated to $x^w$ is 
\[
\prod_i(\lam_i^w)^{a_{n-i+1}}=\prod_ip^{a_{n-i+1}[(n-1)/2-i+1-m_n-m_{n-1}-\dotsb-m_{n-i+1}]}\psi_{w(i)}(p)^{a_{n-i+1}}
\]
so the product of the $U_p^a$-eigenvalues associated to all the $x^w$s is
\[
p^{(n-1)!\sum_ia_{n-i+1}[(n-1)/2-i+1-m_n-m_{n-1}-\dotsb-m_{n-i+1}]}\left(\prod_i\psi_i(p)\right)^{(n-1)!\sum_ia_i}
\]
\[
=p^{(n-1)!\sum_ia_{n-i+1}[(n-1)/2-i+1-m_n-m_{n-1}-\dotsb-m_{n-i+1}]}\left(p^{nm_n+(n-1)m_{n-1}+\dotsb+m_1}\prod_i\lam_i\right)^{(n-1)!\sum_ia_i}.
\]
But $\prod_i\lam_i$ is the eigenvalue associated to the operator $U_p^{(1,1,\dotsc,1)}$, which is just right translation by the central matrix $\diag(p,p,\dotsc,p)$, which preserves $f$, so $\prod_i\lam_i=1$. So the sum of the valuations of the $U_p^a$-eigenvalues associated to the companion points is 
\[
(n-1)!\sum_ia_{n-i+1}[(n-1)/2-i+1-m_n-m_{n-1}-\dotsb-m_{n-i+1}]
\]
\[
+(nm_n+(n-1)m_{n-1}+\dotsb+m_1)(n-1)!\sum_ia_i
\]
\[
=(n-1)!\left(\sum_ia_{n-i+1}((n-1)/2-i+1)-\sum_j m_j(a_1+\dotsb+a_j)+\sum_j jm_j\left(\sum_i a_i\right)\right).
\]
Defining $l^a(t)$ to be this last expression, we find that $\sum_{w\in S_n}v(a_p^w)=l^a(t)$ as desired.

The conclusion that each individual $v(a_p^w)$ is bounded above by $l^a(t)$ follows because all the $a_p^w$s are algebraic integers.
\end{proof}

Let $c_i=\cond(\chi_i)$, let $\chi_{(1)},\dotsc,\chi_{(n-1)}$ be the characters $\chi_1,\dotsc,\chi_{n-1}$ reordered so that $\cond(\chi_{(1)})\le\cond(\chi_{(2)})\le\dotsb\le\cond(\chi_{(n-1)})$, let $c_{(i)}=\cond(\chi_{(i)})$, and let $T_{(i)}=T(\chi_{(i)})$. To get from Proposition~\ref{ubd} to the statement of Theorem~\ref{mybounds}, we just need to check that for all $t$ and $\chi$ such that $m_i\ge \eps m_j$ for all $i\neq j$ and $\cond(\chi_i\chi_j^{-1})=\max(\cond(\chi_i),\cond(\chi_j))$ for all $i\neq j$, $\left(hp^{j(\chi)}d_t, hp^{j(\chi)}d_tl(t)\right)$ has the desired numerical qualities. First we check the size of $d_t$.

\begin{prop}
The dimension $d_t$ is a polynomial of total degree $\frac{n(n-1)}2$ in $m_1,\dotsc,m_{n-1}$.
\end{prop}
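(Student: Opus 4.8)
The plan is to compute $d_t = \dim \ind_{B(\ints_p)}^{\iw_p,alg}t = \dim S_t(k)$ using the Weyl dimension formula. For $GL_n$, the dimension of the irreducible representation of highest weight $t = (t_1,\dotsc,t_n)$ with $t_1 \ge \dotsb \ge t_n$ is
\[
d_t = \prod_{1\le i<j\le n} \frac{t_i - t_j + j - i}{j-i}.
\]
Since $t_n = 0$ in our setup, each factor $t_i - t_j$ can be written in terms of the successive differences $m_k = t_k - t_{k+1}$ as $t_i - t_j = m_i + m_{i+1} + \dotsb + m_{j-1}$, a nonnegative integer linear combination of the $m_k$'s. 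Thus the numerator $\prod_{i<j}(t_i - t_j + j - i)$ is a product of $\binom n2 = \frac{n(n-1)}2$ linear (inhomogeneous) factors in $m_1,\dotsc,m_{n-1}$, and after dividing by the nonzero integer constant $\prod_{i<j}(j-i)$ we obtain that $d_t$ is a polynomial in $m_1,\dotsc,m_{n-1}$ of total degree $\frac{n(n-1)}2$.

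First I would invoke the Weyl dimension formula (e.g. via the references already cited, Section 12.1.3 of \cite{gw09}) to get the product expression above; this is legitimate since we work with $GL_n$ in characteristic zero after fixing $\it_p$, and $S_t(k)$ is exactly the highest-weight module. Then I would record the change of variables $t_i - t_j = \sum_{k=i}^{j-1} m_k$, observe that each of the $\binom n2$ factors $(t_i - t_j) + (j-i)$ is a degree-one polynomial in the $m_k$'s with leading (degree-one) part $\sum_{k=i}^{j-1} m_k$, and conclude that the product has total degree exactly $\binom n2 = \frac{n(n-1)}2$ — the top-degree part being $\prod_{i<j}(m_i + \dotsb + m_{j-1})/\prod_{i<j}(j-i)$, which is a nonzero polynomial, so the degree is exactly (not merely at most) $\frac{n(n-1)}2$.

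There is essentially no obstacle here; the only point requiring a word of care is making sure the constant $\prod_{i<j}(j-i)$ really divides the numerator as a polynomial (so that $d_t$ is an honest polynomial in the $m_k$'s, not merely a rational function) — but this is immediate because $d_t$ takes integer values and, more structurally, the Weyl character formula already exhibits $d_t$ as a polynomial in the entries of $t$ with the normalization built in. Alternatively, one can cite the hook content formula, which expresses $d_t$ directly as a product over the boxes of the Young diagram and manifestly gives a polynomial of the stated degree. So the proof is a short citation-plus-bookkeeping argument.
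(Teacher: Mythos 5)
Your proof is correct, but it takes a genuinely different route from the paper. You invoke the Weyl dimension formula
\[
d_t=\prod_{1\le i<j\le n}\frac{t_i-t_j+j-i}{j-i},
\]
substitute $t_i-t_j=m_i+\dotsb+m_{j-1}$, and read off that $d_t$ is a polynomial in the $m_k$'s whose top-degree part $\prod_{i<j}(m_i+\dotsb+m_{j-1})/\prod_{i<j}(j-i)$ is a nonzero homogeneous polynomial of degree $\binom n2$, so the total degree is exactly $\frac{n(n-1)}2$. (Your worry about divisibility is moot: you are dividing a polynomial by the nonzero constant $\prod_{i<j}(j-i)$, which automatically yields a polynomial with rational coefficients, and the statement asks for nothing more.) The paper instead argues combinatorially: it cites Corollary 14.9 of \cite{ms04} to get a basis of $\ind_{B(\ints_p)}^{GL_n(\ints_p),alg}t$ indexed by chains in the poset of Section 14.2 of \cite{ms04}, shows via the statistic $f(\sg)=\sum_{k\notin\sg}(n+1-k)$ that each downward step in the poset decreases $f$ by $1$, deduces that maximal chains have length $\frac{n(n+1)}2-1$, and concludes that the leading terms of $d_t$ are products $\prod\binom{m_i+c_i}{c_i}$ with $\sum c_i=\frac{n(n-1)}2$. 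Your argument is shorter and leans on a classical closed formula, making the leading homogeneous part completely explicit; the paper's argument stays within the standard-monomial/Pl\"ucker-coordinate framework it has already set up (and which underlies its bases elsewhere), at the cost of a more hands-on chain-counting computation. Both establish the exact degree, so there is no gap in your approach.
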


\begin{proof}
By Corollary 14.9 of~\cite{ms04}, $\ind_{B(\ints_p)}^{GL_n(\ints_p),alg.}t$ has a basis indexed by chains in the poset described in Section 14.2 of~\cite{ms04}. For a subset $\sg$ of $\{1,\dotsc,n\}$, let $f(\sg)=\sum_{k\notin\sg}(n+1-k)$. We claim that when you take one step down the poset, $f(\sg)$ goes down by $1$. This is because, if $\sg$ is one step below $\tau$, there are two possibilities. The first is that $|\tau|=|\sg|$ and there is some $i$ for which $\sg_i=\tau_i-1$ and $\sg_j=\tau_j$ for all $j$ with $j\neq i$; in this case the complements $\sg^c$ and $\tau^c$ are the same except for $\sg_i\in\tau^c$ and $\sg_i+1=\tau_i\in\sg^c$, which contribute $n-\sg_i$ and $n-\sg_i-1$ to the sums $f(\sg)$ and $f(\tau)$, so $f(\sg)=f(\tau)-1$. The second is that $|\sg|=|\tau|+1$ and $\sg$ contains $n$ and $\tau$ does not, so again $f(\sg)=f(\tau)-1$. 

So a maximal chain in this poset starts with $\{n\}$, which has $f$-value $2+\dotsb+n=\frac{n(n+1)}2-1$, and ends with $\{1,2,\dotsc,n-1\}$, which has $f$-value $1$; its length is therefore $\frac{n(n+1)}2-1$. A leading term of $d_{m_1,\dotsc,m_{n-1},0}$ comes from distributing $m_1,\dotsc,m_{n-1}$ among corresponding variables in a maximal chain. So it is a product $\prod\binom{m_i+c_i}{c_i}$ where the $c_i+1$s sum to $\frac{n(n+1)}2-1$; that is, the $c_i$s sum to $\frac{n(n+1)}2-1-(n-1)=\frac{n(n-1)}2$. 
\end{proof}

Since $m_i\ge\eps m_j$ for all $i\neq j$, we can find some $A_{\eps}$ such that $l(t)\le A_{\eps}d_t^{\frac2{n(n-1)}}$ for all such $m_1,\dotsc,m_{n-1}$. Also, by the formula stated in Example~\ref{localgtcoords}, we have
\[
v(T_{(i)})=v(T(\chi_{(i)}))=Ap^{-c_{(i)}}
\]
for a constant $A$ (depending on $p$). Thus we have
\[
p^{j(\chi)}=p^{c_{(1)}+2c_{(2)}+\dotsb+(n-1)c_{(n-1)}-\frac{n(n-1)}2}=A'v(T_{(1)})^{-1}v(T_{(2)})^{-2}\dotsb v(T_{(n-1)})^{-(n-1)}.
\]
So if we let $x=hp^{j(\chi)}d_t$ and $y=hp^{j(\chi)}d_tl(t)$, we have
\begin{align*}
y&=\left(hp^{j(\chi)}d_t\right)^{1+\frac2{n(n-1)}}\left(hp^{j(\chi)}d_t\right)^{-\frac2{n(n-1)}}l(t) \\
&=Ax^{1+\frac2{n(n-1)}}\left(p^{j(\chi)}\right)^{-\frac2{n(n-1)}}d_t^{-\frac2{n(n-1)}}l(t) \\
&\le AA_{\eps}x^{1+\frac2{n(n-1)}}\left(v(T_{(1)})^{-1}v(T_{(2)})^{-2}\dotsb v(T_{(n-1)})^{-(n-1)}\right)^{-\frac2{n(n-1)}} \\
&=A'\left(v(T_{(1)})^{\frac{2}{n(n-1)}}v(T_{(2)})^{\frac{2\cdot 2}{n(n-1)}}\dotsb v(T_{(n-1)})^{\frac{2\cdot(n-1)}{n(n-1)}}\right)x^{1+\frac2{n(n-1)}}
\end{align*}
where $A'$ depends only on $n$, $p$, $h$, and $\eps$, as desired. This proves Part~\ref{myubd} of Theorem~\ref{mybounds}. 


\subsection{Combining upper bound points}
\label{combine}

We show that Theorem~\ref{unified-ubd} is a natural consequence of Part~\ref{myubd} of Theorem~\ref{mybounds}. First we need the following lemma of Wan, which is stated in~\cite{wan98} with $\ints_p$-coefficients but works identically with $\osc_{\cplx_p}$-coefficients.

\begin{lem}[Wan 1998]
\label{np-coincide}
Let $Q_1(X),Q_2(X)$ be two elements in $\osc_{\cplx_p}\ps{X}$ with $Q_1(0)=Q_2(0)=1$. Let $N_i(x)$ be the function on $\real_{\ge0}$ whose graph is the Newton polygon of $Q_i(X)$. Assume that $\nu(x)$ is a strictly increasing continuous function on $\real_{\ge0}$ such that $\nu(0)\le0$, $N_i(x)\ge x\nu(x)$ for $1\le i\le 2$ and $x\ge1$, and $\lim_{x\to\infty}\nu(x)=\infty$. Assume further that the function $x\nu^{-1}(x)$ is increasing on $\real_{>0}$, where $\nu^{-1}(x)$ denotes the inverse function of $\nu(x)$ defined at least on $\real_{\ge0}$. For $x\ge0$, we define the integer-valued increasing function $m_\nu(x)=\flr{x\nu^{-1}(x)}$. If the congruence
\[
Q_1(X)\equiv Q_2(X)\pmod{p^{m_\nu(\al)+1}}
\]
holds for some $\al\ge0$, then the two Newton polygons $N_i(x)$ coincide for all the sides with slopes at most $\al$.
\end{lem}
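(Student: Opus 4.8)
\textbf{Proof plan for Lemma~\ref{np-coincide}.}

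The plan is to reduce the statement to a standard fact about Newton polygons of congruent power series, namely that if $Q_1 \equiv Q_2 \pmod{p^{M+1}}$ then the coefficients of $Q_1$ and $Q_2$ agree through any index $N$ for which both Newton polygons stay strictly below height $M+1$ over $[0,N]$ — equivalently, the two Newton polygons coincide on all segments whose right endpoint has height at most $M$. So the crux is to choose $N$ (depending on $\al$) large enough that the portion of each $N_i$ with slope at most $\al$ lies over $[0,N]$, yet with $N_i(N) \le m_\nu(\al)$, so that the congruence modulo $p^{m_\nu(\al)+1}$ is strong enough to pin down that portion.

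First I would fix $\al \ge 0$ and set $N = \lceil \nu^{-1}(\al)\rceil$, or more carefully the largest integer with $N \le \nu^{-1}(\al)$ after checking the boundary behavior; the point is that $\nu$ is strictly increasing with $\nu(N) \le \al$. Next I would argue that any side of $N_i$ of slope $\le \al$ must have its right endpoint with $x$-coordinate $\le N$: if a vertex of $N_i$ sat at some $x_0 > N$ with the polygon still having slope $\le \al$ there, then by convexity $N_i(x_0)/x_0 \le \al$ would be compatible with... — actually the cleaner route is that on a side of slope $s \le \al$, the average slope $N_i(x)/x$ of the whole polygon up to any point on that side is $\le s \le \al$, while the hypothesis $N_i(x) \ge x\nu(x)$ forces $\nu(x) \le \al$, hence $x \le \nu^{-1}(\al)$ (using that $\nu$ is increasing), hence $x \le N$. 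So all sides of slope $\le \al$ of either polygon live over $[0,N]$.

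Then I would bound the height: for $x \le N$ we need $N_i(x) \le m_\nu(\al) = \flr{\al \nu^{-1}(\al)}$. Here I would use that $x \mapsto x\nu^{-1}(x)$ is increasing (this is exactly why that hypothesis is imposed) together with $N_i(x) \le$ (the height at the right end of the last slope-$\le\al$ side) $\le N\cdot\al \le \nu^{-1}(\al)\cdot \al$; combined with integrality of coefficients this gives $N_i(N') \le m_\nu(\al)$ at the relevant vertex $N'\le N$. Finally, the congruence $Q_1 \equiv Q_2 \pmod{p^{m_\nu(\al)+1}}$ implies the coefficients $c^{(1)}_j$ and $c^{(2)}_j$ have the same valuation (and in fact agree) whenever $v(c^{(i)}_j) \le m_\nu(\al)$; since every vertex of $N_i$ over $[0,N]$ has height $\le m_\nu(\al)$, the two lower convex hulls of the points $(j, v(c^{(i)}_j))$ coincide over $[0,N]$, and therefore $N_1$ and $N_2$ agree on all sides of slope at most $\al$.

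The main obstacle I expect is the bookkeeping around the boundary cases and the floor function: making sure that the chosen $N$ simultaneously contains every slope-$\le\al$ side and keeps the polygon height at or below $m_\nu(\al)$ (not $m_\nu(\al)+1$), which is precisely where the monotonicity of $x\nu^{-1}(x)$ and the $\nu(0)\le 0$ normalization get used. Since this is Wan's lemma quoted essentially verbatim, I would in practice just cite~\cite{wan98} for the inequality-chasing and only spell out that the $\osc_{\cplx_p}$-coefficient case is identical to the $\ints_p$-coefficient case because the only property of the coefficient ring used is that it is a complete valuation ring in which $v$ takes discrete integer values on the relevant coefficients.
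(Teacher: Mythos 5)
Your plan matches what the paper actually does: the paper offers no proof of this lemma at all, quoting it from Wan 1998 and only remarking that the $\ints_p$-coefficient statement works identically over $\osc_{\cplx_p}$, and your sketch of Wan's argument (slope-$\le\al$ sides live over $x\le\nu^{-1}(\al)$ by the lower bound $N_i(x)\ge x\nu(x)$, hence their vertices have height at most $\al\nu^{-1}(\al)$, so the congruence pins down the relevant coefficients) is essentially the right outline, modulo routine convexity bookkeeping to pass from "the low-lying points of the two coefficient sets coincide" to "the slope-$\le\al$ portions of the polygons coincide." One small correction to your closing remark: the reason the $\osc_{\cplx_p}$ case is identical is \emph{not} that valuations take discrete integer values there (they are rational, not integral, on $\cplx_p$), but that the argument never needs integrality of the heights, only the inequality $m_\nu(\al)+1=\flr{\al\nu^{-1}(\al)}+1>\al\nu^{-1}(\al)$, which forces any coefficient of valuation at most $\al\nu^{-1}(\al)$ to have the same valuation for $Q_1$ and $Q_2$.
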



\begin{proof}[Proof of Theorem~\ref{unified-ubd}]
By Corollary~\ref{ubd}, $\np(t\chi)$ passes below the point
\[
\left(hp^{j(\chi)}d_t, hp^{j(\chi)}d_tl(t)\right).
\]
Note that the slope of $\np(t\chi)$ at $x$-coordinate $hp^{j(\chi)}d_t$ is at most $l(t)$. We may apply Lemma~\ref{np-coincide} with $\nu(x)=A_1x^{\frac2{n(n-1)}}\min_iv(T(\chi_i))$, so that 
\[
m_\nu(x)\asymp \frac{x^{1+\frac{n(n-1)}2}}{\left(\min_iv(T(\chi_i))\right)^{\frac{n(n-1)}2}}.
\]
Let $t_i^{(1)}=t_i+(n-i)p^{m_\nu(l(t))+1}\phi(q)$. By Lemma~\ref{np-coincide} applied to $P(X,t\chi)$ and $P(X,t^{(1)}\chi)$, we find that $\np(t^{(1)}\chi)$ also passes below this point. (The factor of $\phi(q)$ is to keep $t_1^{(1)},\dotsc,t_{n-1}^{(1)}$ in the same equivalence class as $t_1,\dotsc,t_{n-1}\pmod{\phi(q)}$ so that they fall in the same weight polydisc; presumably it would also suffice to twist by an appropriate tame character instead.) However, by Corollary~\ref{ubd}, $\np(t^{(1)}\chi)$ also passes below 
\[
\left(hp^{j(\chi)}d_{t^{(1)}}, hp^{j(\chi)}d_{t^{(1)}}l(t^{(1)})\right).
\]
Repeating this, we find a sequence $t=t^{(0)},t^{(1)},t^{(2)},\dotsc$ of dominant algebraic weights such that $\np(t^{(k)}\chi)$ passes below
\[
\left(hp^{j(\chi)}d_{t^{(0)}}, hp^{j(\chi)}d_{t^{(0)}}l(t^{(0)})\right),\dotsc,\left(hp^{j(\chi)}d_{t^{(k)}}, hp^{j(\chi)}d_{t^{(k)}}l(t^{(k)})\right).
\]
Evidently the $t^{(k)}$ approach a limit $t^\infty$, and $\np(t^\infty\chi)$ passes below
\[
\left(hp^{j(\chi)}d_{t^{(k)}}, hp^{j(\chi)}d_{t^{(k)}}l(t^{(k)})\right)
\]
for all $k$. The result follows as in the end of Section~\ref{myubdproof}. (Note that since $m_i^{(k)}=m_i^{(k-1)}+p^{m_\nu(l(t^{(k-1)}))+1}\phi(q)$, if $m_i^{(k-1)}\ge\eps m_j^{(k-1)}$ for all $i\neq j$, the same is true for the $m_i^{(k)}$.)
\end{proof}


\section{Geometry of the eigenvariety over the boundary of weight space}
\label{geometry}

Fix an index $a$, and let $\wsc_{<\nu}$ be the subset of characters $w$ such that $v(T_a(w))<\nu$ and $v(T_a(w))<\nu v(T_j(w))$ for all $j\neq a$ (so in particular $v(T_a)=\min_i v(T_i)$). let $\zsc_{<\nu}$ be the preimage of $\wsc_{<\nu}$ in the eigencurve $\zsc$. For any real number $\al$, let $X(<\al)$ be the subset of $\zsc$ of points $x$ for which $v(a_p(x))<\al v(T_a(w(x)))$, and define $X(=\al)$, $X(>\al)$ similarly.

As in the previous section, fix a polydisc in $\wsc$. For $T=(T_1,\dotsc,T_{n-1})$ in the polydisc and $m=(m_1,\dotsc,m_{n-1})\in\ints_{\ge0}^{n-1}$, write $T^m=T_1^{m_1}\dotsb T_{n-1}^{m_{n-1}}$ for short. Let 
\[
\det(1-XU_p)=\sum_{N\ge0}c_N(T)X^N,
\] 
where 
\[
c_N(T)=\sum_{m=(m_1,\dotsc,m_{n-1})\in\ints_{\ge0}^{n-1}}b_{N,m}T^m\in\ints_p\ps{T_1,\dotsc,T_{n-1}}.
\] 
Let $y=\np(T)(x)$ be the Newton polygon of $\sum_{N\ge0}c_N(T)X^N$.

For the following theorem, the only input we need is a lower bound for $y=\np(T)(x)$ of the form $y=v(T_a)f(x)$ where $f(x)$ is a convex function, which we have (with $f(x)=A_1x^{1+\frac2{n(n-1)}}$) from Part~\ref{mylbd} of Theorem~\ref{mybounds}.

\begin{thm}
\label{disconnect}
For every $\al\in\real_{\ge0}$, there is some valuation $\nu(\al)>0$ such that $X(=\al)_{<\nu(\al)}$ is disconnected from its complement in $\zsc_{<\nu(\al)}$.
\end{thm}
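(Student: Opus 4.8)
The plan is to exploit the lower bound $\np(T)(x) \ge v(T_a)\cdot A_1 x^{1+\frac{2}{n(n-1)}}$ from Part~\ref{mylbd} of Theorem~\ref{mybounds} to produce, for each $\al$, a ``gap'' in the set of achievable slopes on the portion of the eigenvariety over $\wsc_{<\nu(\al)}$, and then to use that gap to separate $X(=\al)_{<\nu(\al)}$ from its complement both topologically and algebraically. First I would fix $\al \ge 0$ and recall that a point $x \in \zsc_{<\nu}$ with weight $w$ lies on a segment of $\np(T(w))$ of some slope $s = s(x)$, and that $X(=\al)$ consists exactly of the points with $s(x) = \al\, v(T_a(w))$... wait — more precisely $X(=\al)$ is cut out by $v(a_p(x)) = \al v(T_a(w(x)))$, i.e.\ the slope of the relevant segment, divided by $v(T_a(w))$, equals $\al$. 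The convexity lower bound says that once the $x$-coordinate (the index $N$) exceeds some explicit threshold $N_0(\al)$ depending only on $\al$, $A_1$, $n$, the slope of $\np(T)$ at $N$ exceeds $\al\, v(T_a)$ by a definite amount; conversely, the total number of segments of slope $\le \al v(T_a)$ is at most $N_0(\al)$, a bound \emph{uniform in $w$}. So over $\wsc_{<\nu(\al)}$ — no matter which $\nu(\al)$ we choose — the ``slope $\le \al$'' part of the spectral variety is finite flat of bounded degree over weight space.

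Next I would choose $\nu(\al)$. The point of shrinking $\nu$ is to guarantee that the finitely many segments of slope $\le \al v(T_a)$ actually fall into a clean dichotomy relative to $\al$: slopes strictly less than $\al v(T_a)$, and slopes equal to $\al v(T_a)$, with nothing in an open neighborhood of $\al v(T_a)$ on either side except the value $\al v(T_a)$ itself. For this I would argue that the finitely many slope functions $w \mapsto s_i(w)/v(T_a(w))$ (one for each of the $\le N_0(\al)$ low-lying segments, indexed by a flattening stratification of $\wsc_{<\nu}$) are locally constant with values in $\rats_{\ge 0}$ — this is the standard fact that Newton polygon breakpoints vary locally constantly in a family after restricting to a suitable open — so on each connected component of $\wsc_{<\nu}$ (for $\nu$ small) these ratios take finitely many rational values, and one picks $\nu(\al)$ so small that no such ratio lies in the punctured interval $(\al - \delta, \al + \delta)\setminus\{\al\}$ for a suitable $\delta$; this is possible because there are only finitely many ratios and the eigenvariety decomposition (Conjecture~\ref{cmbk}-style: countably many components, but only finitely many meeting the low-slope region over a given bounded-$\nu$ neighborhood) is controlled. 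Then $\zsc_{<\nu(\al)}$ decomposes as $\zsc(\mathrm{slope}<\al) \sqcup \zsc(\mathrm{slope}=\al) \sqcup \zsc(\mathrm{slope}>\al)$, where the first two pieces together form the ``slope $\le \al$'' finite flat cover (a closed \emph{and} open subvariety, being cut out by a factor of $P_W(X)$ that splits off by the Newton polygon gap, via the standard Weierstrass-factorization / $p$-adic Riemann–Hurwitz argument for power series with an explicit corner), and $X(=\al)_{<\nu(\al)} = \zsc(\mathrm{slope}=\al)$ is then a union of connected components of that cover.

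The key mechanism in more detail: over an affinoid $W \subset \wsc_{<\nu(\al)}$, the characteristic series $P_W(X) = \sum c_N(T) X^N$ admits a factorization $P_W(X) = Q_{\le\al}(X)\cdot R_{>\al}(X)$ where $Q_{\le\al}$ is a polynomial whose Newton polygon is exactly the part of $\np$ of slope $\le \al v(T_a)$ and $R_{>\al}(0) = 1$ with all slopes $> \al v(T_a)$; this is legitimate because the lower bound $\np(T)(x) \ge v(T_a)A_1 x^{1+2/(n(n-1))}$ forces an actual vertex of the Newton polygon at an $x$-coordinate $\le N_0(\al)$ with both adjacent slopes on the correct sides of $\al v(T_a)$ once $\nu$ is small — here I use that convexity plus the strict growth rate of $f$ means the slope crosses the value $\al v(T_a)$ and, after the $\nu$-shrinking above, crosses it \emph{cleanly}. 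Then $\mathrm{Spec}$ of the $Q_{\le\al}$-factor is open-and-closed in $\zsc_W$ (standard: a factorization of the spectral series into coprime factors induces a decomposition of the spectral variety into disjoint closed subvarieties, e.g.\ Coleman–Mazur or Buzzard's eigenvariety machine), and within it $X(=\al)$ is the further open-and-closed locus where the slope equals $\al v(T_a)$, again by a Newton-polygon factorization of $Q_{\le\al}$ itself into its slope-$=\al v(T_a)$ part and its slope-$<\al v(T_a)$ part — legitimate because, over each component of $\wsc_{<\nu(\al)}$, the breakpoint delimiting slope $<\al v(T_a)$ from slope $=\al v(T_a)$ is locally constant by the choice of $\nu(\al)$. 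Gluing these local factorizations over an admissible cover of $\wsc_{<\nu(\al)}$ (they agree on overlaps by uniqueness of the Newton-polygon factorization) gives the global decomposition, and disconnectedness of $X(=\al)_{<\nu(\al)}$ from its complement follows.

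\textbf{Main obstacle.} I expect the delicate point to be the $\nu$-shrinking step: ensuring that the slope of $\np(T(w))$, as a function of $w$ near the boundary, genuinely separates the value $\al v(T_a(w))$ from all smaller achievable slopes by a positive gap, \emph{uniformly}. This needs (i) the local constancy of Newton-polygon breakpoints in the family — standard but requires citing the right flatness/semicontinuity input — and (ii) a control on how many and which components of $\zsc$ can meet the low-slope region over $\wsc_{<\nu}$, so that the finitely many slope-ratio values one must avoid are genuinely finite and one can pick $\delta$ and hence $\nu(\al)$. The upper-bound theorem is \emph{not} needed here — only the convex lower bound — so the entire argument rests on squeezing the Newton polygon from below and reading off a corner; the subtlety is purely in making ``corner in the correct place, cleanly'' precise and uniform, which is exactly the place where the hypothesis $v(T_a) = \min_i v(T_i)$ and the freedom to shrink $\nu$ get used.
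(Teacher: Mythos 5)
Your overall strategy is the right one, and you have correctly identified that only the convex lower bound from Part~\ref{mylbd} of Theorem~\ref{mybounds} is needed: it traps the vertex of $\np(T)$ at $x$-coordinate $d(\al,T)$ inside the region between $y=\al v(T_a)x$ and $y=v(T_a)f(x)$, which is uniformly bounded by a box whose dimensions depend only on $\al$ (via the root $d(\al)$ of $\al x = f(x)$). The factorization step at the end (a Newton-polygon vertex with a positive gap in adjacent slopes gives a Weierstrass-type factorization of the characteristic series into coprime factors, hence an open-and-closed decomposition of the spectral variety) is standard and fine.

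However, the step where you actually produce the gap is where your argument has a genuine hole, and it is exactly the step you flag as the ``main obstacle.'' You appeal to ``local constancy of the slope ratios $s_i(w)/v(T_a(w))$'' and to a flattening stratification, but this is not a standard fact, and you do not establish it: the raw slopes of $\np(T(w))$ certainly vary continuously with $w$, and nothing about generic flatness tells you their ratios to $v(T_a(w))$ are locally constant. Worse, your fallback appeal to a Conjecture~\ref{cmbk}-style decomposition (``countably many components, only finitely many meeting the low-slope region'') is close to circular, since that decomposition is precisely the kind of statement Theorem~\ref{disconnect} is a step towards.

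The paper closes this gap by an entirely concrete, arithmetic argument that your approach misses. Since $c_N(T) = \sum_m b_{N,m}T^m \in \ints_p\ps{T_1,\dotsc,T_{n-1}}$, every valuation $v(c_N(T))$ has the form $\mu + \sum_k \lam^k v(T_k)$ with $\mu,\lam^k$ nonnegative integers. Once $v(T_a)<1$ and $v(T_a)<\nu\, v(T_j)$ for $j\neq a$ with $\nu<\frac1{\al d(\al)}$, the constraint $v(c_N(T))\le \al d(\al) v(T_a)<1$ forces $\mu=0$ and $\lam^k=0$ for $k\neq a$, so every low-lying vertex has $y$-coordinate equal to an integer $\lam^a\in[0,\al d(\al)]$ times $v(T_a)$. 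Hence the low-lying slopes, divided by $v(T_a)$, lie in the explicit finite set $\{(\lam_j-\lam_i)/(j-i) : \lam_j-\lam_i\in[0,\al d(\al)],\ j-i\in[1,d(\al)]\}$, which is bounded away from $\al$ by a positive amount independent of $w$. This is the precise sense in which the corner is ``clean,'' and it replaces the unjustified local-constancy claim. So you are missing an idea rather than just a citation: the discreteness of achievable normalized slopes does not come from any general flatness principle but from the integral Iwasawa-algebra structure of the coefficients $c_N(T)$ together with the hierarchy $v(T_a)\ll v(T_j)$.
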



\begin{proof}
Let $d(\al,T)$ be the number of slopes in $y=\np(T)(x)$ of value strictly less than $\al v(T_a)$ (that is to say, the dimension of $\ssc_T(G,U_0(p))^{<\al v(T_a)}$). Assume $v(T_a)<1$. 

We claim that the point $(d(\al,T),\np(T)(d(\al,T)))$ lies inside the region bounded by the line $y=\al v(T_a)x$ and the function $y=v(T_a)f(x)$. It lies below $y=\al v(T_a)x$ because all slopes of $NP(T)$ up to $d(\al,T)$ are less than $\al v(T_a)$. It lies above $y=v(T_a)f(x)$ because this is a lower bound for $y=\np(T)(x)$.

This region lies inside the box whose lower left corner is $(0,0)$ and whose upper right corner is $(d(\al),\al d(\al)v(T_a))$, where $d(\al)$ is the nonzero solution to $\al x=f(x)$.

We have $(d(\al,T),\np(T)(d(\al,T)))=(j,v(c_j(T)))$ for some $j$. This is a vertex of $y=\np(T)(x)$. The vertex immediately preceding it is of the form $(i,v(c_i(T)))$ for some $i$. The slope between the two is
\[
\frac{v(c_j(T))-v(c_i(T))}{j-i}.
\]
This is the largest slope of $y=\np(T)(x)$ less than $\al v(T_a)$. We have $1\le j-i\le d(\al)$.

But $c_j(T)=\sum_{m\ge0}b_{j,m}T^m$ is a sum of terms $b_{j,m}T^m$ where $v(b_{j,m})$ is an integer and $v(T^m)=m_1v(T_1)+\dotsb+m_{n-1}v(T_{n-1})$. Thus $v(c_j(T))=\mu_j+\lam_j^1v(T_1)+\dotsb+\lam_j^{n-1}v(T_{n-1})$ where $\mu_j,\lam_j^k$ are integers in the range $[0,\al d(\al)]$ (since $v(c_j(T))\le \al d(\al)v(T_a)$). Similarly $v(c_i(T))=\mu_i+\lam_i^1v(T_1)+\dotsb+\lam_i^{n-1}v(T_{n-1})$ where $\mu_i,\lam_i^k\in[0,\al d(\al)]$ as well.

Assume that $v(T_a)<\frac1{\al d(\al)}$, so that $\al d(\al) v(T_a)<1$, and furthermore that $v(T_a)<\frac1{\al d(\al)}v(T_j)$ for all $j\neq a$. Then in order to have $v(c_i(T)),v(c_j(T))\le \al d(\al) v(T_a)$, we must have $\mu_i=\mu_j=0$ and $\lam^k=0$ for all $k\neq i$.

So the largest slope of $y=\np(T)(x)$ less than $\al v(T_a)$ is of the form $\frac{\lam_j-\lam_i}{j-i}v(T_a)$, where $\lam_j-\lam_i\in[0,\al d(\al)]$ and $j-i\in[1,d(\al)]$. This is a finite, discrete set of points. So the ratio of the largest slope of $y=\np(T)(x)$ less than $\al v(T_a)$ to $v(T_a)$ is bounded away from $\al$ independently of $T_a$. 

Setting $\nu(\al)<\frac1{\al d(\al)}$, we conclude that $X(<\al)_{<\nu(\al)}$ is disconnected from its complement in $\zsc_{<\nu(\al)}$.

This argument goes through exactly the same way if $X(<\al)$ is replaced by $X(\le\al)$: either the smallest slope greater than $\al$ is at least $\al+1$, or, if not, the next endpoint is again trapped in a box whose area is at most linear in $v(T)$, and the same argument applies. So we can choose $\nu(\al)$ such that $X(=\al)_{<\nu(\al)}$ is disconnected from its complement in $\zsc_{<\nu(\al)}$.
\end{proof}

As Liu-Wan-Xiao do in Theorem 3.19 of~\cite{lwx17}, we can also use Part~\ref{mylbd} of Theorem~\ref{mybounds} to give a simple proof of the fact that the ordinary part of $\zsc$ is finite and flat over $\wsc$ and disconnected from its complement.

\begin{thm}
$X(=0)$ is finite and flat over $\wsc$ and is a union of connected components of $\zsc$.
\end{thm}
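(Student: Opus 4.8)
The plan is to present $X(=0)$ locally over weight space as the zero locus of the ``slope $0$'' factor of $\det(1-XU_p)$, with the explicit lower bound of Theorem~\ref{mybounds} making the extraction of that factor completely elementary. Cover $\wsc$ by reduced affinoids $W$. Since an affinoid subdomain of an open polydisc is relatively compact, there is $r_W<1$ with $|T_i(w)|\le r_W$ on $W$, hence $\min_i v(T_i(w))\ge\delta_W:=-\log_p r_W>0$ there. On the part of $W$ where $|T_i(w)|>1/p$ for all $i$, Part~\ref{mylbd} of Theorem~\ref{mybounds} says that $\np\bigl(\sum_N c_N(T)X^N\bigr)$ passes above $y=\bigl(A_1x^{1+\frac2{n(n-1)}}-C\bigr)\min_i v(T_i(w))$. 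Fix the least integer $d_0$ with $A_1 d_0^{1+\frac2{n(n-1)}}-C>0$. Then for $N\ge d_0$ and all such $w$ we get $v(c_N(w))\ge\delta_W\bigl(A_1 d_0^{1+\frac2{n(n-1)}}-C\bigr)>0$, so $\|c_N\|_W<1$. Thus over $W$ the series $\sum_N c_N(T)X^N$ is a polynomial of degree $<d_0$ plus a Fredholm series all of whose coefficients have norm $<1$, and the slope-$0$ part of its Newton polygon has at most $d_0$ segments for every $w\in W$.

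Next, refine the cover so that each of the finitely many functions $c_1,\dots,c_{d_0-1}$ is, on each $W$, either a unit or of norm $<1$. Then the vertex of the Newton polygon terminating the slope-$0$ part occurs at an $x$-coordinate $d_W\le d_0$ that is constant on $W$, the slope just to its right is strictly positive, and the standard Newton-polygon argument (as in Coleman's and Buzzard's construction of spectral varieties) yields a factorization $\det(1-XU_p)=Q_W(X)S_W(X)$ in $\osc(W)[[X]]$ with $Q_W$ a degree-$d_W$ polynomial, $Q_W(0)=1$ and unit leading coefficient (all reciprocal roots units), $S_W(0)=1$ (all reciprocal roots non-units), and $Q_W,S_W$ coprime. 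Correspondingly $\ssc_W(G,U_0(p))=\ssc_W^{\mathrm{ord}}\oplus\ssc_W^{>0}$ $U_p$-equivariantly, with $\ssc_W^{\mathrm{ord}}$ finite locally free of rank $d_W$ over $\osc(W)$ and $U_p$ invertible on it with unit reciprocal eigenvalues.

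Finally, over $W$ the locus $X(=0)$ is exactly $Z(Q_W)\subset W\times\gra_m$: a root $\beta$ of $\det(1-XU_p)(w)$ has $a_p$-value $\beta^{-1}$, a unit iff $\beta$ is a root of $Q_W$. Since the reciprocal polynomial of $Q_W$ is monic of degree $d_W$ up to a unit, $Z(Q_W)\to W$ is finite and free of rank $d_W$, hence finite flat; coprimality of $Q_W$ and $S_W$ gives $\zsc|_W=Z(Q_W)\sqcup Z(S_W)$, so $X(=0)\cap\zsc|_W$ is open and closed in $\zsc|_W$. Uniqueness of the slope-$0$ decomposition makes the $Q_W$ agree on overlaps, hence glue; on the complementary, relatively compact region of $\wsc$ where the hypothesis $|T_i|>1/p$ fails, the same $\ssc^{\mathrm{ord}}$ and $Q_W$ are furnished by the general slope-$\le0$ decomposition of Coleman's theory, compatibly with the above. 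Therefore $X(=0)$ is finite flat over $\wsc$ and a union of connected components of $\zsc$. The step that deserves care is the refinement of the cover so that the slope-$0$ vertex is defined and of constant degree $d_W$ over each $W$ — that is, converting the clean uniform estimate $\|c_N\|_W<1$ for $N\ge d_0$ supplied by Theorem~\ref{mybounds} into an honest factorization — which is precisely where the explicit bound substitutes for, and simplifies, the general spectral-theoretic input.
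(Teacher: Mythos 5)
Your overall strategy is the same as the paper's (which simply runs the proof of Theorem 3.19 of Liu--Wan--Xiao): use Part~\ref{mylbd} of Theorem~\ref{mybounds} to see that only an initial block of the coefficients $c_N$ can have absolute value $1$, so the Newton polygon has a vertex separating a bounded slope-zero part from strictly positive slopes, and then the standard factorization gives finiteness, flatness, and disconnectedness. But your execution has a genuine gap at exactly the point where the real work happens. The estimate $v(c_N(w))\ge\delta_W\bigl(A_1d_0^{1+\frac2{n(n-1)}}-C\bigr)$ is only available at weights with $|T_i(w)|>1/p$ for all $i$, so it does not yield $\|c_N\|_W<1$ when $W$ meets (or lies entirely inside) the region where some $|T_i(w)|\le 1/p$; and your fallback for that central region, ``the general slope-$\le0$ decomposition of Coleman's theory,'' does not exist over an arbitrary affinoid: for a general compact operator the number of unit reciprocal roots of the characteristic series is not locally constant (multiplication by the coordinate on a rank-one module over a closed disc already jumps), and the Coleman--Buzzard factorization requires the relevant Newton-polygon vertex to sit at a constant $x$-coordinate over $W$. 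So the constancy of the slope-zero degree over the center, and its agreement with the degree you found near the boundary (which your gluing needs), is asserted rather than proved. The step ``refine the cover so that each of $c_1,\dots,c_{d_0-1}$ is either a unit or of norm $<1$'' has the same problem: it is not justified for general analytic functions (and ``unit'' here must mean $|c_i(w)|=1$ everywhere, not merely invertible in $\osc(W)$).

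All of these gaps are closed by the single observation the paper uses and you never invoke: $c_N\in\ints_p\ps{T_1,\dotsc,T_{n-1}}$, so each $c_N$ is either a unit of the Iwasawa algebra, in which case $|c_N(w)|=1$ at \emph{every} weight $w$, or lies in the maximal ideal $(p,T_1,\dotsc,T_{n-1})$, in which case $|c_N(w)|\le\max\left(p^{-1},|T_1(w)|,\dotsc,|T_{n-1}(w)|\right)<1$ at every weight, and uniformly $\le\max(p^{-1},r_W)$ on any affinoid $W$. Your boundary estimate, applied at a single weight with all $|T_i|>1/p$, then shows $c_N$ is a non-unit for every $N\ge d_0$, so there is a maximal $N$ with $c_N$ a unit; the slope-zero break occurs at this globally constant degree at every weight, with the next slope uniformly bounded away from $0$ on affinoids. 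With that in hand your factorization-and-gluing argument is correct but collapses to the paper's one-paragraph proof; without it, the central region and the cover refinement are unsupported, and these are precisely the crux.
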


\begin{proof}
The proof of Theorem 3.19 of~\cite{lwx17} goes through almost word-for-word. By Part~\ref{mylbd} of Theorem~\ref{mybounds}, there is some maximal $N$ such that $c_N(T_1,\dotsc,T_{n-1})$ is a unit in $\ints_p\ps{T_1,\dotsc,T_{n-1}}$, or equivalently, the constant term of $c_N(T_1,\dotsc,T_{n-1})$ is a unit in $\ints_p$. Then for each $(T_1,\dotsc,T_{n-1})$, the Newton polygon of $\sum_{n=0}^\infty c_N(T_1,\dotsc,T_{n-1})X^N$ starts with $N$ segments of slope $0$ followed by a segment of slope at least $\max(1,B\min_j v(T_j))$ for some constant $B$. Since $\max(1,B\min_j v(T_j))$ is uniformly bounded away from $0$ over any affinoid subdomain, $X(=0)$ is disconnected from its complement, and it is finite and flat of degree $N$. 
\end{proof}








\addcontentsline{toc}{section}{References}

\bibliography{refs}
\bibliographystyle{plain}

\end{document}